\documentclass[10pt,a4paper]{article}

\usepackage[T1]{fontenc}
\usepackage[all]{xy}
\usepackage{amssymb}
\usepackage{amsmath}
\usepackage{bbm}
\usepackage{graphics,graphicx}
\usepackage{ esint }
\usepackage{eepic}
\usepackage{epsfig}
\usepackage{times}
\usepackage{stmaryrd}
\usepackage{hyperref}

\usepackage[usenames]{color}
\usepackage[toc,page]{appendix}
\DeclareMathOperator{\arcsinh}{arcsinh}

\setlength{\topmargin}{-29mm} 
\setlength{\textheight}{271mm} 
\setlength{\oddsidemargin}{-10mm} 
\setlength{\textwidth}{168mm} 

\parindent4mm

\newtheorem{theorem}{Theorem}[section]
\newtheorem{lemma}[theorem]{Lemma}
\newtheorem{prop}[theorem]{Proposition}

\newtheorem{remark}[theorem]{Remark}
\newtheorem{corollary}[theorem]{Corollary}

\newenvironment{proof}{\noindent\text{\textbf{Proof.\:}}}{}

\def\qed{\hfill $\square$ \goodbreak \smallskip}

\def\eps{\varepsilon}
\def\R{{\mathbb R}}
\def\N{{\mathbb N}}
\def\S{{\mathbb S}}

\def\Om{\Omega}
\def\C{{\mathcal C}}
\def\D{{\mathcal D}}
\def\F{\mathcal{F}}
\def\H{\mathcal{H}}

\def\div{{\rm div}}
\def\vol{{\rm Vol}}

\def\Ind{\raise 2pt\hbox{$\chi $}}

\newcounter{mnotecount}[section]

\newcommand{\rmnote}[1]{}

\newbox\boxint\newbox\boxtir\newdimen\longa\newdimen\longb
\def\intgm{
\setbox\boxint=\hbox{$\displaystyle\int$}
\setbox\boxtir=\hbox{$-$}
\longa=\wd\boxint\longb=\wd\boxtir
\advance\longa-\longb\div_{g}ide\longa2
\advance\longb\longa
\box\boxint\hskip-\longb\box\boxtir}

\def\intm{
\setbox\boxint=\hbox{$\int$}
\setbox\boxtir=\hbox{{--}}
\longa=\wd\boxint\longb=\wd\boxtir
\advance\longa-\longb\div_{g}ide\longa2
\advance\longb\longa
\box\boxint\hskip-\longb\box\boxtir}
\def\moyp{\mathop{\intm}\nolimits}
\def\moy_#1{\ifmmode\ifinner{\moyp_{#1}}\else{\fint_{#1}}\fi\fi}

\title{Existence and regularity of Faber-Krahn minimizers\\ in a Riemannian manifold}
\author{Jimmy Lamboley\footnote{Sorbonne Universit\'e, Universit\'e Paris Diderot, CNRS, Institut de Math\'ematiques de Jussieu-Paris Rive Gauche, IMJ-PRG, F-75005, Paris, France. E-mail: \texttt{jimmy.lamboley@imj-prg.fr}}, Pieralberto Sicbaldi\footnote{Universidad de Granada, Departamento de Geometr{\'{\i}}a y Topolog{\'{\i}}a, Facultad de Ciencias, Campus Fuentenueva, 18071 Granada, Spain \& Aix Marseille Universit\'e, CNRS, Centrale Marseille, I2M, Marseille, France.
E-mail: \texttt{pieralberto@ugr.es}}}
\begin{document}

\maketitle
\begin{abstract}

In this paper, we study the minimization of $\lambda_{1}(\Om)$, the first Dirichlet eigenvalue of the Laplace-Beltrami operator, within the class of open sets $\Omega$ of fixed volume in a Riemmanian manifold $(M,g)$. In the Euclidian setting (when $(M,g)=(\R^n,e)$), the well-known Faber-Krahn inequality asserts that the solution of such problem is any ball of suitable volume. Even if similar results are known or may be expected for Riemannian manifolds with symmetries, we cannot expect to find explicit solutions for general manifolds $(M,g)$. In this paper we study existence and regularity properties for this spectral shape optimization problem in a Riemannian setting, in a similar fashion as for the isoperimetric problem. We first give an existence result in the context of compact Riemannian manifolds, and we discuss the case of non-compact manifolds by giving a counter-example to existence. We then focus on the regularity theory for this problem, and using the tools coming from the theory of free boundary problems, we show that solutions are smooth up to a possible residual set of co-dimension 5 or higher.

\medskip

{\it Keywords:\,} Shape optimization, Laplace-Beltrami operator, first eigenvalue, regularity of free boundaries, Riemannian manifold, Faber-Krahn profile, isoperimetric problems.
\smallskip

\end{abstract}


\section{Introduction and main results}

Let $(M,g)$ be a  smooth $n$-dimensional Riemannian manifold (without boundary), where $n \geq 2$.
For all open subset $\Omega$ of $M$, we denote by $\lambda_1(\Omega)$ the first eigenvalue of the Laplace-Beltrami
operator $\Delta_g$ in $\Omega$, with zero Dirichlet boundary conditions on $\partial\Om$, that is
\begin{equation}\label{eq:lambda1}
\lambda_{1}(\Om)=\displaystyle \min\left\{\frac{\displaystyle \int_{\Om}\|\nabla^{g} u\|_{g}^2\, \textrm{dvol}_g}{\displaystyle \int_{\Om}u^2\,\textrm{dvol}_g}, \;\;u\in H^1_{0}(\Om)\right\},
\end{equation}
where $\textrm{dvol}_g$, $\| \cdot \|_g$ and $\nabla^g$ represent respectively the volume form, the norm and the gradient, all with respect to the metric $g$. The Sobolev space $H_0^1(\Omega)$ also refers to the metric $g$.
When $\Om$ is smooth enough, we can characterize $\lambda_{1}(\Om)$ by the existence of $u_{\Om}$  such that
\begin{equation}\label{eq:edp}
\left\{
\begin{array}{ll}
\Delta_g u_{\Omega}+\lambda_1(\Omega)\, u_{\Omega}=0&\textrm{in}\;\;\Omega,\\[2mm]
u_{\Omega}=0&\textrm{on}\;\;\partial\Omega,
\end{array}
\right.\quad \textrm{ with } \quad u_{\Om}\geq 0 \quad \textrm{ and }\quad \displaystyle{\int_\Omega u_\Omega^2 \, \textrm{dvol}_g}=1,
\end{equation}
and $u_{\Om}$ is called the first normalized eigenfunction of the Laplace-Beltrami operator with Dirichlet boundary condition on $\partial\Om$.

\medskip

Let $\vol_g(M)$ denote the volume of the Riemannian manifold $M$, that can be infinite. We are interested in  the existence and the regularity of optimal sets for the following shape optimization problem: for any $m\in(0,\vol_g(M))$, find an open subset $\Omega^*\subset M$ of volume $m$ such that
\begin{equation}\label{eq:pbforme}
\displaystyle{\lambda_1(\Omega^*)=\min\{\lambda_1(\Omega);\;\Omega\, \textrm{open subset of}\, M, \vol_g(\Omega)=m\}.}
\end{equation}
The solutions $\Omega^*$ of such optimization problem are called {\it Faber-Krahn minimizers}, and the function 
\[
FK:m\in (0,\vol_{g}(M))\mapsto FK(m)
\] 
associating to $m$ the value of the infimum in \eqref{eq:pbforme}, is called the {\it Faber-Krahn profile} of the manifold $(M,g)$.

\medskip

This problem is inspired by the classical {\it isoperimetric problem}: for any $m\in(0,\vol_g(M))$, find an open domain $\Omega\subset M$ whose boundary $\Sigma = \partial \Omega$ minimizes area among regions of volume $m$. The region $\overline{\Omega}$ and its boundary $\Sigma$ are
called {\it isoperimetric region} and {\it isoperimetric hypersurface} respectively. In the Euclidean space, $\Omega$ must be a ball by the standard isoperimetric inequality, and $\Sigma$ is then a sphere. For a general Riemannian manifold this fact fails, and the shape of the optimal region can be very difficult to understand. Nevertheless, the following fundamental results about the existence and the regularity of isoperimetric regions are now very well-known: by the seminal papers of Almgren \cite{Alm}, Gr\"uter \cite{Gru}, and Gonzalez, Massari,
Tamanini \cite{GMT}, if $M$ is a compact $n$-dimensional Riemannian manifold, then, for any positive $m<\vol_g(M)$, there exists an open set $\Omega \subset M$ whose boundary $\Sigma$ minimizes area among regions of volume $m$, and, except for a closed singular set of Hausdorff dimension at most $n-8$, $\Sigma$ is a smooth embedded hypersurface with constant mean curvature. In particular, for dimensions of the ambient manifold less or equal to 7, the isoperimetric hypersurface $\Sigma$ (that is an objet of dimension $n-1$, then less or equal to 6) is an embedded hypersurface. In fact, an isoperimetric hypersurface $\Sigma$ has an area-minimizing tangent cone at each point, and if a tangent
cone at $p\in \Sigma$ is an hyperplane, then $p$ is a regular point of $\Sigma$. The value of the critical dimension (i.e. 8 if we consider the dimension of the ambient manifold, and 7 if we consider the dimension of the isoperimetric hypersurface) relies on the existence of the Simons cone in $\R^8$, i.e. 
\[
C = \{ (x_1, ..., x_8) \in \R^8 \, \, \, |\, \, \, x_1^2+ x_2^2 +x_3^2+x_4^2= x_5^2+x_6^2+x_7^2+x_8^2\}
\]
which is a global minimizer of the area functional.

\medskip

Coming back to the problem of finding Faber-Krahn minimizers, when the manifold is the Euclidean space it is well known that a ball $\Om^*$ of volume $m>0$ is a solution to the problem (\ref{eq:pbforme}) (as for the isoperimetric problem), and in particular it exists and it is smooth. This fact follows from the Faber-Krahn inequality
: for all open subset $\Omega$ of $\mathbb{R}^n$ whose volume is $m$, we have
\begin{equation} \label{e:faber-krahn}
\lambda_1(\Omega) \,\ge\, \lambda_1(B_m) \,
\end{equation}
where $B_m$ is a round ball in~$\mathbb{R}^n$ with volume $m$; moreover equality holds in~\eqref{e:faber-krahn} if and only if 
$\Omega = B^m$ up to translation and to sets of 0 capacity. 
 We point out that the proof of the Faber-Krahn inequality relies on the isoperimetric inequality.

When $(M,g)$ is a general Riemannian manifold with no symmetry, we cannot expect to explicitely identify minimizers for problem \eqref{eq:pbforme}, and very few results are known.  Nevertheless, the Faber-Krahn profile and the isoperimetric profile are linked (see for example \cite{C84Eig}), and starting from the analogy with the isoperimetric problem, in the following papers there is a construction of examples of domains that are critical for $\Omega \mapsto \lambda_1(\Omega)$ under volume constraint in some Riemannian manifolds, but it is not known a priori if such critical domains are or not Faber-Krahn minimizers: \cite{P1, P2, P3, P4, P5, P6}. Notice that when the manifold has no symmetry, the constructions of such examples is limited to small or big volumes. Moreover, all such examples have regular boundary.
\medskip

In this paper, we are inspired by the existence and regularity results for the isoperimetric problem recalled before, and we plan to obtain similar results for the Faber-Krahn problem \eqref{eq:pbforme}. Nevertheless, for the existence, it is now classical (see for example \cite{B07Doo}) that one cannot expect to prove directly that there exists an open set solution of \eqref{eq:pbforme}. Indeed, such a result (first part of Theorem \ref{th:reg0}, which will be proven in Section \ref{ssect:lip}) is already a regularity result for an optimal set; the reason is that the class of open sets does not satisfy any suitable compactness property for our problem. The usual way to overcome this difficulty is to relax our minimization problem in the class of \textit{quasi-open sets}, relying on the notion of capacity. Basically, quasi-open sets are level sets of functions of $H^1(M)$, which happen to not be necessarily continuous, so their level sets aren't necessarily open (however, any open set is quasi-open); for more details on the study of capacity and quasi-open sets, see for example  \cite{EG92Mea,HP05Var}. If $\Om\subset M$ is a quasi-open set, we can define
\begin{equation}\label{eq:H10}
H^1_{0}(\Om)=\{u\in H^1_{0}(M),\;u=0\; q.e. \textrm{ on }M\setminus\Om\}.
\end{equation}
where $q.e.$ means quasi-everywhere, which means everywhere except on a set of capacity 0  (see Section 2 for the definition of the capacity in the Riemannian setting). This definition retrieves the usual definition of $H^1_{0}(\Om)$ when $\Om$ is an open set, namely the closure of $C^\infty_{c}(\Om)$ for the $H^1$-norm. Once we have a definition of the space $H^1_{0}(\Om)$, definition \eqref{eq:lambda1} can be applied to define $\lambda_{1}(\Om)$ for any quasi-open set $\Om$, and it is classical that equation \eqref{eq:edp} has a meaning in the weak sense, in particular $u_{\Om}\in H^1_{0}(\Om)$ solution to \eqref{eq:edp} exists and is unique.

 For the regularity of Faber-Krahn minimizers, as it happens for the isoperimetric hypersurfaces, we will prove that there exists a critical dimension $k^*$ for which Faber-Krahn minimizers are regular if $n<k^*$, and singularities can appear starting from the dimension $n=k^*$. In order to define this critical dimension $k^*$, we need to recall 
notion of homogeneous global minimizer of the Alt-Caffarelli functional in $\R^n$, that is a homogeneous function $u_0\in H^1_{loc}(\R^n)$ such that:
\begin{equation}
\int_{B_{R}(0)}|\nabla^e u_0|^2 + \vol_e(\{u_0>0\}\cap B_{R}(0)) \le\int_{B_{R}(0)}|\nabla^e w|^2 + \vol_e(\{w>0\}\cap B_{R}(0)).
\end{equation}
for every $R>0$ and $w\in H^1_{loc}(\R^n)$ such that $w=u_0$ outside $B_{R}(0)$ (the Euclidean ball of radius $R$ and center $0$), where $\nabla^e$ is the Euclidean gradient and $\vol_{e}$ is the Lebesgue measure in $\R^n$. We will see that the Alf-Caffarelli functional plays in our problem the same role as the area functional in the isoperimetric problem, and $k^*$ can be defined as the smallest integer such that there exists a non-trivial homogeneous global minimizer of the Alt-Caffarelli function. Unfortunately, finding the exact value of $k^*$ is still a difficult open problem. Nevertheless, thanks to the important results by Caffarelli, Jerison, Kenig \cite{CJK04Glo}, De Silva, Jerison \cite{DJ08Asi}, and Jerison, Savin  \cite{JS15Som} we know such dimension belongs to the set $\{5,6,7\}$.

\medskip

We are now in position to state our main results. The first one is the following:

\begin{theorem}\label{th:existence0} (Existence)
If $M$ is compact
 and $m\in(0,\vol_g(M))$, then there exists a quasi-open set $\Omega^*$ solution of 
\begin{equation}\label{eq:pbforme2}
\displaystyle{\lambda_1(\Omega^*)=\min\{\lambda_1(\Omega);\;\Omega\, \textrm{quasi-open subset of}\, M, \vol_g(\Omega)=m\}.}
\end{equation}
\end{theorem}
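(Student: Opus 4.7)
The plan is to follow the direct method of the calculus of variations in the quasi-open framework, in the spirit of Buttazzo and Dal Maso. Starting from a minimizing sequence $(\Om_k)_{k\in\N}$ of quasi-open sets with $\vol_g(\Om_k)=m$ and $\lambda_1(\Om_k)\to FK(m)$, I would consider the associated normalized first eigenfunctions $u_k\in H^1_0(\Om_k)\subset H^1_0(M)$ given by the analogue of \eqref{eq:edp} for quasi-open sets. They satisfy
\[
\int_M u_k^2\,\textrm{dvol}_g=1, \qquad \int_M \|\nabla^g u_k\|_g^2\,\textrm{dvol}_g=\lambda_1(\Om_k),
\]
so that $(u_k)$ is bounded in $H^1(M)$.

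Since $M$ is compact, the Rellich--Kondrachov theorem yields, up to a subsequence, a limit $u\in H^1(M)$ with $u_k\rightharpoonup u$ weakly in $H^1$ and $u_k\to u$ strongly in $L^2$. Hence $u\geq 0$, $\int_M u^2\,\textrm{dvol}_g=1$, and by weak lower semicontinuity of the Dirichlet energy, $\int_M \|\nabla^g u\|_g^2\,\textrm{dvol}_g \leq FK(m)$. I would then define $\Om^*:=\{\tilde u>0\}$, where $\tilde u$ is a quasi-continuous representative of $u$; by standard capacity theory this is a quasi-open set and, in view of definition \eqref{eq:H10}, $u\in H^1_0(\Om^*)$. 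Testing the Rayleigh quotient on $\Om^*$ with $u$ then yields $\lambda_1(\Om^*)\leq FK(m)$.

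It remains to handle the volume constraint. Passing to a further subsequence so that $u_k\to u$ quasi-everywhere, one sees that $\Om^*$ is contained, up to a set of zero capacity, in $\liminf_k \Om_k$, and a Fatou-type argument gives $\vol_g(\Om^*)\leq \liminf_k \vol_g(\Om_k)=m$. If equality holds, $\Om^*$ is already a minimizer. Otherwise, since $\vol_g(M)>m$, we may adjoin to $\Om^*$ any measurable set $E\subset M\setminus\Om^*$ of volume $m-\vol_g(\Om^*)$; the monotonicity of $\lambda_1$ under quasi-open inclusions, namely $\lambda_1(\Om^*\cup E)\leq \lambda_1(\Om^*)$, ensures that $\Om^*\cup E$ remains a minimizer and has volume exactly $m$.

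The main obstacle is the correct handling of the quasi-open framework: identifying $\Om^*$ as a quasi-open set such that the weak $H^1$ limit $u$ lies in $H^1_0(\Om^*)$ requires care with quasi-continuous representatives and the capacity-theoretic definition \eqref{eq:H10}, and the inequality $\vol_g(\Om^*)\leq m$ must be derived from quasi-everywhere convergence rather than from open-set convergence. The compactness of $M$ is essential in two ways: it provides the compact embedding $H^1(M)\hookrightarrow L^2(M)$ needed for the normalization $\int u^2=1$ to pass to the limit, and it rules out the loss of mass at infinity, which is precisely the obstruction that the announced counter-example in the non-compact setting will exploit.
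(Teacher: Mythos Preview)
Your approach is essentially the same as the paper's: both pass to a free-boundary formulation via first eigenfunctions, use Rellich--Kondrachov on the compact manifold to extract a weak $H^1$ limit $u$ with $\|u\|_{L^2}=1$, obtain $\vol_g(\{u>0\})\leq m$ by Fatou (for which a.e.\ convergence along a subsequence suffices; quasi-everywhere convergence is not needed here), and then enlarge the resulting quasi-open set to saturate the volume constraint. One small correction: the set $E$ you adjoin must be chosen quasi-open (e.g.\ an open ball of suitable radius), not merely measurable, so that $\Omega^*\cup E$ remains in the admissible class for \eqref{eq:pbforme2}.
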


\medskip

Notice that in this result we need the compactness of the manifold $M$. In Section \ref{ssect:nonexist}, we discuss the compactness hypothesis, and we exhibit a non-compact manifold $M$ so that for any parameter $m>0$, problems \eqref{eq:pbforme} and \eqref{eq:pbforme2} do not have solutions. Compactness in not required to state the regularity result about Faber-Krahn minimizers. Nevertheless we need an other important topological assumption that is the connectedness (the discussion of the connectedness hypothesis of the manifold $M$ is done in Remark \ref{rk:connectedness} that follows). Our second main result is then the following:

\begin{theorem}\label{th:reg0} (Regularity)
Let $\Om^*$ be a solution of (\ref{eq:pbforme2}) for $m\in(0,\vol_g(M))$, and assume $M$ is connected. Let $k^*$ be the lowest dimension $k$ such that there exists a non-trivial homogeneous global minimizer of the Alt-Caffarelli functional in $\R^k$ (it is known that $k^*\in\llbracket5,7\rrbracket$). Then:
\begin{enumerate}
\item  $\Omega^*$ is open (and therefore solves \eqref{eq:pbforme}) and has finite perimeter in $M$.
\item We can decompose $\partial\Omega^*=\Sigma_{reg}\cup\Sigma_{sing}$ in two disjoint sets such that:
\begin{enumerate} 
\item\label{item:reg} $\Sigma_{reg}$ is relatively open in $\partial\Om$ and is a smooth hypersurface in $M$ ($C^\infty$ if $M$ is $C^\infty$, analytic if $M$ is analytic),
\item\label{item:sing} we have:
\begin{itemize}
\item if $n< k^*$, then $\Sigma_{sing}=\emptyset$,
\item if $n=k^*$, then $\Sigma_{sing}$ is made of isolated points, 
\item if $n>k^*$, then $dim_{\H}(\Sigma_{sing})\leq n-k^*$, i.e.
\begin{equation}\label{eq:k*}
\forall s>n-k^*, \H^{s}(\Sigma_{sing})=0.
\end{equation}
\end{itemize}
\end{enumerate}
\end{enumerate}
\end{theorem}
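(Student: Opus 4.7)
The plan is to follow the now-standard strategy from the Alt-Caffarelli theory of one-phase free boundary problems, suitably adapted to the Riemannian/eigenvalue setting. The key reduction is to pass from the volume-constrained problem to a penalized one: I would first show that, since $M$ is connected, any minimizer $\Omega^*$ of \eqref{eq:pbforme2} is also a minimizer of a functional of the form
\begin{equation*}
J_\Lambda(\Omega)\,=\,\lambda_1(\Omega)\,+\,\Lambda\,\vol_g(\Omega)
\end{equation*}
for some $\Lambda>0$, at least when tested against competitors whose volume is close to $m$. This is the point where connectedness is crucial: on a disconnected manifold one could shift mass between components without affecting $\lambda_1$, and the Lagrange multiplier argument collapses. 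Once this penalized formulation is available, I would work directly with the normalized eigenfunction $u^*=u_{\Omega^*}\in H^1_0(M)$, which is a natural quasi-continuous representative, and use the set-valued identification $\Omega^*=\{u^*>0\}$ (up to capacity).

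The heart of the argument is then local regularity of $u^*$ in small balls $B_r(x_0)\subset M$ of radius below the injectivity radius. In normal coordinates centered at $x_0$, the Laplace-Beltrami operator becomes the Euclidean Laplacian plus lower order terms involving the metric $g$, and the quadratic functional $\int \|\nabla^g u\|_g^2$ becomes a Euclidean Dirichlet energy perturbed by smooth $x$-dependent coefficients. I would then prove, in this chart, (i) Lipschitz regularity of $u^*$ across the free boundary (comparing $u^*$ to its harmonic extension inside test balls and exploiting the penalization to rule out thin positivity sets), and (ii) non-degeneracy, $\sup_{B_r(x)}u^*\geq c\,r$ for $x$ close to $\partial\Omega^*$. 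From Lipschitz continuity, $\Omega^*=\{u^*>0\}$ is open, and the combination of Lipschitz bound plus non-degeneracy yields uniform density estimates for $\Omega^*$ and $M\setminus\Omega^*$ on $\partial\Omega^*$, which in turn gives finite perimeter. This proves item 1.

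For item 2, the free boundary analysis proceeds via blow-ups: for $x_0\in\partial\Omega^*$, the rescalings
\begin{equation*}
u^*_{r}(y)\,=\,\frac{1}{r}\,u^*(\exp_{x_0}(r y))
\end{equation*}
subconverge, as $r\to 0$, in $H^1_{loc}\cap C^{0,1}_{loc}$ to a blow-up limit $u_0$ defined on $\R^n$. Because the metric perturbation in normal coordinates is $O(r^2)$ and the eigenvalue term $\lambda_1(\Omega^*)u^*$ contributes to lower order after rescaling, the limit $u_0$ is a global minimizer of the Alt-Caffarelli functional on $\R^n$; a Weiss-type monotonicity formula (which must be derived here in curved geometry, paying attention to curvature corrections) forces $u_0$ to be $1$-homogeneous. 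I would then invoke the flat-implies-regular result (De Silva's viscosity improvement-of-flatness), combined with Kinderlehrer-Nirenberg-type bootstrap, to show that at any point whose blow-up is a half-plane solution, $\partial\Omega^*$ is locally $C^\infty$ (resp.\ analytic when $g$ is analytic). Federer's dimension reduction, applied to the class of homogeneous minimizers of the Alt-Caffarelli functional, then gives the stated $\mathcal{H}^s$ bound on $\Sigma_{sing}$ in terms of $k^*\in\llbracket5,7\rrbracket$.

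The main obstacle I anticipate is the transfer of the flat Alt-Caffarelli regularity machinery to the Riemannian setting: both the Weiss monotonicity formula and the improvement-of-flatness argument are delicate and rely on the scale invariance of the Euclidean problem, which is broken by the curvature and by the eigenvalue term $\lambda_1(\Omega^*)\,u^*$. One has to quantify carefully that these two perturbations are lower order on small balls, absorb them into the flatness estimate, and verify that the limiting blow-up problem is indeed the classical Alt-Caffarelli one so that the dimension $k^*$ enters exactly as stated.
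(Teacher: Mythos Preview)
Your proposal is correct and follows essentially the same route as the paper: penalization of the volume constraint, Lipschitz continuity and non-degeneracy of the eigenfunction, density estimates and finite perimeter, a Weiss-type almost-monotonicity formula with curvature error terms, blow-up to $1$-homogeneous global Alt--Caffarelli minimizers, De Silva's improvement of flatness with Kinderlehrer--Nirenberg bootstrap, and dimension reduction for the singular set. The one refinement worth flagging is that the paper does not obtain a single penalization constant $\Lambda$ valid for all nearby competitors, but rather two-sided constants $\mu_-(h)\le\Lambda\le\mu_+(h)$ (for inward and outward perturbations of volume gap $h$) converging to $\Lambda$ as $h\to0$, with the positivity $\Lambda>0$ established by a separate argument; this asymptotic two-sided bound is exactly what is needed for the non-degeneracy estimate and for showing that the blow-up limit minimizes the Euclidean Alt--Caffarelli functional with parameter $\Lambda$.
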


Combining these two results, we get that for any $n$-dimensional connected and compact Riemannian manifold $M$, one can find an open set $\Om^*$ solution of \eqref{eq:pbforme}, which is $C^\infty$, up to a singular set of dimension less than $n-5$.

\begin{remark}\label{rk:connectedness}{\rm
Without connectedness assumption for the manifold $M$, regularity of a minimizer may fail. Consider $M$ to be the union of two disjoint copies of unit spheres $\S_{1}^{n}\cup\S_{2}^n$ endowed with its usual metric $g$; for 
\[
m\in (\vol_{g}(\S^n),2\vol_{g}(\S^n)=\vol_{g}(M))\,,
\] 
any set of the form $\Om=\S_{1}^n\cup\omega$ where $\omega$ is any quasi-open subset of $\S^n_{2}$ of volume $m-\vol_{g}(\S^n)$ is a solution to \eqref{eq:pbforme2} because $\lambda_1(\S_{1}^n\cup\omega) = \lambda_1(\S_{1}^n) < \lambda_1(\omega)$, and to \eqref{eq:pbforme} if $\omega$ is open, so one cannot expect any regularity property. Nevertheless, by replacing $\omega$ by any other smooth set of same volume, it is not hard to see that there still exists a smooth solution to \eqref{eq:pbforme}, see also \cite[Appendix]{BL09Reg}.}
\end{remark}

%
%

Let us discuss the strategy for proving these results, and their relation to the state of the art.

\medskip

\noindent{\bf About Theorem \ref{th:existence0}.} In the Euclidian setting, while the ball is known to be a solution, the problem retrieves its interest if one consider an extra ``box constraint'' of the form $\Om\subset D$ where $D$ is an open and bounded subset of $\R^n$. In this setting, existence results were obtain for problem \eqref{eq:pbforme} with two different strategies in \cite{H00Sur} and \cite{BHP05Lip}. The main difficulty for these results is to obtain a solution that is an open set. We focus first only on proving that there exists a quasi-open set solution to \eqref{eq:pbforme2}; the fact that solutions are open will be dealt with in Theorem \ref{th:reg0}.

Similarly to \cite{BHP05Lip}, we use the variational formulation of $\lambda_{1}(\Om)$ to show that problem \eqref{eq:pbforme2} is equivalent to solving a free boundary problem (namely \eqref{eq:pbfonc}), which is a calculus of variation problem consisting in the minimization of an energy $J(u)$ involving the level set $\Om_{u}=\{u\neq 0\}$, among functions $u\in H^1(M)$. Considering a solution $u$ to this free boundary problem, the set $\Om_{u}$ will be a solution to \eqref{eq:pbforme2}. Once we obtain a free boundary formulation, one can use the same strategy as in the seminal paper \cite{AC81Exi} of Alt and Caffarelli (see below for more details) to prove existence of a solution, which relies on classical tools of calculus of variation.

This strategy may fail if the manifold $M$ is not compact, and, as we said before, we give in Section 3 an explicit example of manifold $M$ so that \eqref{eq:pbforme} has no solution. In order to exhibit such a manifold, we will need two essential properties:
\begin{itemize}
\item the Faber-Krahn profile of the manifold $(M,g)$ is strictly bounded from below by the Faber-Krahn profile of the euclidian space $(\R^n,e)$; this happens to be true if the same is valid for the isoperimetric profile, see Proposition \ref{p1};
\item $M$ is asymptotically Euclidian in the sense that a geodesic ball in $M$ converging to infinity is a smooth perturbation of a euclidian ball of same volume.
\end{itemize}
We show that both properties are valid when $M$ is the usual catenoid in $\R^3$, which provides the expected counter-example to existence of Faber-Krahn minimizers (Theorem \ref{th:nonexist}).\\

\noindent{\bf About Theorem \ref{th:reg0}.} 
Regularity results for such kind of shape optimization problems are quite involved and will rely on many steps that we will describe in the introduction of Section \ref{sect:reg}. Similarly to the existence result, we start with formulation \eqref{eq:pbfonc0}  and the definition of the functional $J$ in \eqref{eq:pbfonc} introduced in Section \ref{sect:existence}, and we are naturally led to the field of ``regularity of free boundaries''. In order to understand our strategy, let us start by commenting on the extensive litterature on this topic: the regularity theory for such problems was initiated in \cite{AC81Exi}, where the authors study the regularity of the free boundary for 
\begin{equation}\label{eq:AC0}
\min \left\{\int_{D}|\nabla^e v|^2 +  \gamma\vol_{e}(\{v>0\})\;;\; v\in H^1(D), v=u_{0}\in \partial D\right\},
\end{equation}
where $u_{0}\geq 0$ is given and $D$ is an open bounded set in $\R^n$. 
In particular, the authors show that an optimal solution $v$ is locally Lipschitz continuous inside $D$, which is the optimal regularity one can expect for $v$, and implies in particular that $\{v>0\}$ is an open set. They show then that the free boundary $\partial\{v>0\}\cap D$ can be decomposed into a smooth (analytic) part $\Sigma_{reg}$ where one can write the classical optimality condition $|\nabla^e v|_{|\Sigma_{reg}}^2=\gamma$, and a (possibly) singular part $\Sigma_{sing}$ which is small in the sense that $\H^{n-1}(\Sigma_{sing})=0$ (where $\H^s$ denotes the $s$-dimensional Hausdorff measure); they also show  that in fact $\Sigma_{sing}=\emptyset$ if $n=2$.
These results have been improved by Weiss in \cite{W99Par} who introduced a monotonicity formula to study blow-up limits, which combined to the study of global homogeneous minimizer of the Alt-Caffarelli functional lead to the estimate $\dim_{\H}(\Sigma_{sing})\leq n-5$ if $n\geq 5$, and $\Sigma_{sing}=\emptyset$ if $n<5$.

We will apply a similar strategy for solutions to \eqref{eq:pbfonc0}-\eqref{eq:pbfonc} (which are the free boundary formulations of our intial problem \eqref{eq:pbforme2}), with three main differences that we need to take into account:
\begin{itemize}
\item deal with the term $\int_{M}w^2$ in $J(w)$, coming from the fact that we are dealing with an eigenvalue problem,
\item 
deal with the Riemannian metric $g$; if $g_{ij}(x)$ is the matrix of the coefficients of the metric $g$ in some suitable local coordinates system, from the PDE point of view to deal with $g$ replaces the Euclidian Laplace operator by an operator of the form 
\[
h\, \div(A\, \nabla\cdot)
\]
where $h(x) = \frac{1}{\sqrt{|g_{ij}(x)|}}$ and $A = g^{ij}(x) \, \sqrt{|g_{ij}(x)|}$, being $|g_{ij}(x)|$ and $g^{ij}(x)$ respectively the determinant of the matrix $g_{ij}(x)$ and the inverse matrix of the matrix $g_{ij}(x)$. 
\item handle the volume constraint instead of a penalization of the volume as in \eqref{eq:AC0}. 
\end{itemize}

Let us mention a few important contributions in similar developements. In \cite{BL09Reg},  T. Brian\c{c}on and the first author of the present paper managed to overcome the first and third difficulties in the Euclidian setting. Note however that they only adapted the result by Alt and Caffarelli and did not adapt the improvement given by Weiss, therefore the estimate of the singular set they obtain was not optimal. In other words, even in the Euclidian setting, Theorem \ref{th:reg0} improves the results in \cite{BL09Reg} (of course, one could argue that solutions are Euclidian balls in this context, but as in \cite{BL09Reg}, one can consider a box constraint of the type $\Om\subset D$ where $D\subset M$, so that it may happen that balls are not admissible sets; even if we did not take into account this constraint in the current paper, when we are concerned with the regularity inside the box (far from $\partial D$), since all argument are local, Theorem \ref{th:reg0} remains valid in this case).

In \cite{W05Opt}, A. Wagner did study the first steps of the strategy from \cite{AC81Exi} for a problem similar to \eqref{eq:pbfonc} (in the Euclidian setting but with an operator of the form $\div(A\nabla\cdot)$). He studies a penalized version of the problem, in a similar fashion to \cite{AAC86Ano}, which leads to the existence of a solution to \eqref{eq:pbforme} (in particular, it is an open set), enjoying some density estimates and a weak formulation of the optimality condition (named ``weak solutions'' in \cite{AC81Exi}). Again, our result is an improvment in the sense that we show that every solution is an open set, and we improve their regularity properties.

More recently, in \cite{DT15Reg,DET17Fre}, the authors develop a regularity theory  for quasi-minimizers of the Alt-Caffarelli functional, including the improvment given by the Weiss-monotonicity formula (therefore, leading to a similar regularity as in Theorem \ref{th:reg0}). However, it is not true that minimizers we are interested in are quasi-minimizers in the sense of \cite{DET17Fre}, as one cannot see the Laplace-Beltrami operator as a small deformation of the Euclidian Laplacian. A similar regularity theory for $\div(A\nabla\cdot)$ operators has been started in \cite{QT18Alm}, though they only deal with the first step of the strategy in proving that optimal solutions are H\"older-continuous in the general case.
But even if a similar regularity theory was valid for such elliptic operator, it would still remain the difficulty to prove that a solution to \eqref{eq:pbfonc} is a quasi-minimizer in the sense of \cite{DET17Fre}, the main difficulty here being to handle the volume constraint. This seems to be a significant and important open problem.

The last contributions we would like to mention are \cite{MTV17Reg,RTV18Exi} which were a strong inspiration for our work. The results in \cite{RTV18Exi} are similar to ours in the sense that they extend results for the Euclidean Laplace operator to a more general class, though the author deal with a drifted operator of the form $-\Delta+\nabla\Phi\cdot\nabla$. Nevertheless, as we have to deal with a Laplace-Beltrami operator, several steps and ideas differ from the current paper.\\

\noindent{\bf Description of the paper.}
In the following section, we introduce the Riemannian setting of our problem. In the third section we introduce the main free boundary formulation which is in some sense equivalent to our shape optimization problem \eqref{eq:pbforme2}, and we use this formulation to prove Theorem \ref{th:existence0}. We also exhibit a non-compact manifold leading to a non-existence phenomenon. Finally, in Section 4, we prove Theorem \ref{th:reg0}.

%
%
%
%
%
%

\section{Basic Riemannian notations}

 Since one of the goals of this paper is to bring the regularity theory for free boundary problems to Riemannian manifold, and then to mix together the geometric and the analytic language, we think it can be convenient for the reader to fix the basic Riemannian notation. For a more complete presentation see \cite{C84Eig}. 
Let $M$ be a Riemannian manifold with metric $g$. If $p \in M$ and $f$ is a $C^1$ real function defined in a neighborhood of $p$, we will denote by $\nabla^g f(p)$ the gradient of $f$ at $p$, i.e. the only vector of the tangent space $T_pM$ such that
\[
g(\nabla^g f(p), \nu_p) = \nu_p f
\]
for every vector $\nu_p \in T_pM$, where $\nu_p f$ is the directional derivative of $f$ at $p$ in the direction $\nu_p$; $\nabla^g f$ will be the gradient vector field, i.e. $\nabla^g f \in T M$, where $T M $ is the tangent bundle of $M$. If $\nabla_\nu X$ is the covariant derivative of a vector field $X$ on the manifold $M$ with respect to $\nu \in TM$, the divergence of $X$ is defined as
\[
\textnormal{div}_g X = \textnormal{trace} (\nu \to \nabla_\nu X),
\]
and the generalization of the Laplacian on a manifold, known as the Laplace-Beltrami operator, is defined by 
\[
\Delta_g f = \textnormal{div}_g(\nabla^g f),
\]
where $f$ is supposed to be of class $C^2$.
Let $U$ be an open set of $M$ and $\phi : U \to \mathbb{R}^n$ a chart on $M$, i.e. a diffeomorphism of $U$ into $\mathbb{R}^n$. If $(x^1,...,x^n)$ are the local coordinates and $\frac{\partial}{\partial x^i}$ are the coordinate vector fields, $i = 1,...,n$, we can define the matrix
\[
G = \left(g_{ij}\right)_{i,j = 1, ..., n}
\]
where $g_{ij} = g\left(\frac{\partial}{\partial x^i}, \frac{\partial}{\partial x^j}\right)$ are the coefficients of the metric $g$, which can be written as
\[
g = g_{ij}(x)\, \textnormal{d}x^i \,  \textnormal{d}x^j
\]
using the Einstein summation convention. 
We denote by $|g|$ the determinant of $G$ and by $g^{ij}$ the coefficients of $G^{-1}$, the inverse matrix of $G$. Straightforward computations show:
\[
\nabla^g f = g^{ij} \, \partial_j f
\]
and
\[
\Delta_g f = \frac{1}{\sqrt{|g|}}\, \partial_i \left( g^{ij}\, \sqrt{|g|}\, \partial_j f \right) = g^{ij} \, \partial_i \partial_j f + \partial_i\, g^{ij} \, \partial_j f + \frac{1}{2} \,  g^{ij} \, \partial_i (\log |g|) \, \partial_j f
\]
where $\partial_j$ denotes the standard derivation with respect to $x^j$. We will denote $\textnormal{dvol}_g$ the Riemannian measure associate to $g$, that in $U$ is
\[
\textnormal{dvol}_g = \sqrt{|g|} \, \textnormal{d}x
\]
where $\textnormal{d}x$ is the Lebesgue measure on $\phi(U)$. More generally, if $\{\phi_i : U_i \to \mathbb{R}^n\}$ is a chart covering of $M$, with an associate partition of unity $\{b_i\}$, we have
\[
\textnormal{dvol}_g = \sum_{i} b_i\, \sqrt{|g|} \, \textnormal{d}x\,
\]
in every chart.  A function $f: M \to \mathbb{R}$ is measurable if it is measurable in every chart, and the definition of $\textnormal{dvol}_g$ makes possible the integration of such functions. A set $K \subset M$ is measurable if it is measurable in every chart, and its volume is given by
\[
\textnormal{Vol}_g(K) = \int_K \textnormal{dvol}_g \,.
\]
The $L^p$-space on $M$, denoted as $L^p(M)$, the Sobolev spaces on $M$ (in particular $H^1(M)$) and the distribution space on $M$, $\mathcal{D}'(M)$, are defined in the same way as in the Euclidean space, but using the Riemannian measure, and $g$ as the scalar product on the tangent bundle.  If $\Omega \subset M$, the {\it capacity} of $\Omega$ is given by
\[
\textnormal{cap}_g(\Omega) = \inf\, \, \, \{ \|\nabla^g u\|^2_{H^1} \, \, \, |\, \, \, u \in H^1(M)\, \, \, , \, \, \, u \geq 1\, \, \, a. e.\, \, \,  \textnormal{in a neighborhood of}\,\, \, \Omega\}
\] 
where $a. e.$ means {\it almost everywhere} with respect to the measure $\textnormal{dvol}_g$. The notion of capacity plays an important role in the definition of the space $H^1_0(\Omega)$, see definition \eqref{eq:H10}.

We will denote by $d_g(p,q)$ the distance between two points $p,q \in M$, i.e. the infimum of 
\[
L(\gamma) = \int_a^b \sqrt{g_{\gamma(t)}(\gamma'(t), \gamma'(t))}\, \textnormal{d}t
\]
taken over all continuous and piecewise $C^1$ curve $\gamma: [a,b] \to M$ such that $\gamma(a) = p$ and $\gamma(b) = q$.  For every $p\in M$ and every vector $\nu \in T_pM$ there exists a unique maximal geodesic $\gamma_{p,\nu}$ satisfying $\gamma_{p,\nu} (0)=p$ and $\gamma_{p,\nu}'(0) = \nu$ (here $I$ is an open interval containing the origin, and is maximal with respect to the existence of $\gamma_{p,\nu}$). The injectivity radius at a point $p$ will be
\[
i_p = \inf_{\nu \in \mathcal{S}} c(\nu)
\]
where $\mathcal{S}$ denotes the unit sphere in $T_p M$, and
\[
c(\nu) = \sup \{ t \in I \, \, \, |\, \, \, d(p, \gamma_{p, \nu}(t)) = t\}
\]
The injectivity radius of the manifold is $i_M = \inf_{p} i_p$, that is positive if the manifold is compact. We will denote by $\textnormal{exp}_p: T_p M \to M$ the exponential map, that is given by
\[
\textnormal{exp}_p(\nu) = \gamma_{p,\nu}(1)\,
\]
if $\gamma_{p,\nu}(1)$ exists. We notice that $\gamma_{p,\nu}(t) = \textnormal{exp}_p(t\, \nu)$. If $r$ is less than the injectivity radius at $p$, then we can have a geodesic ball of radius $r$ and center $p$, that in the following will be denoted by
\[
B_r^g(p) = \{ q \in M \,\,\, |\,\,\, d(p,q) \leq r\} = \{ \textnormal{exp}_p (\nu) \, \, \, |\, \, \, \|\nu\|_g < r \} \,.
\]
For vector fields $X , Y, Z$
on $M$ we denote by
\[
R(X, Y)Z = \nabla_Y(\nabla_X Z) - \nabla_X(\nabla_Y Z) - \nabla_{[Y,X]} Z
\]
the Riemann curvature tensor, where $[Y,X]$ is the vector field
\[
[Y,X] = \nabla_Y X - \nabla_X Y\,.
\]
If $\nu_1, \nu_2 \in T_pM$, we will denote by $\mathrm{Rm}_p(\nu_1,\nu_2)$ the sectional curvature at $p$ of the 2-plane determined by $\nu_1, \nu_2$, i.e.
\[
\mathrm{Rm}_p(\nu_1,\nu_2) = \frac{g(R(\nu_1, \nu_2)\nu_1,\nu_2)}{\|\nu_1\|_g\, \|\nu_2\|_g - g(\nu_1, \nu_2)}\,.
\]
On the other hand, we will denote by $\textnormal{Ric}_p$ the Ricci tensor at $p$, i.e.
\[
\textnormal{Ric}_p (\nu_1, \nu_2) = \sum_{i=1}^n g(R(\nu_1, e_1)\nu_2, e_2)
\]
where $\nu_1, \nu_2 \in T_pM$ and $\{e_1,e_2,...,e_n\}$ is an orthonormal basis of $T_p M$. The function $\textnormal{Ric}_p (\nu, \nu)$ on the set of tangent vectors $\nu$ of length 1, that we will write $\textnormal{Ric}_p (\nu)$, is the Ricci curvature at $p$.
If $S$ is an $(n-1)$-submanifold of $M$ and $\nu$ is the normal vector to $S$, we recall that the second fundamental form on $S$ is given by
\[
A(\xi_1, \xi_2) = (\nabla_{\xi_1} \xi_2)^{\nu}
\]
where $\xi_1, \xi_2 \in TS$ and the superscript $\nu$ indicated that we consider only the component on the direction $\nu$. We will denote by $H_S$ the mean curvature of $S$, i.e. the trace of the second fundamental form.

\section{Existence}\label{sect:existence}

\subsection{The compact case}

In this section, $(M,g)$ is a Riemannian manifold.
We start by giving some free boundary formulations of a relaxed version of \eqref{eq:pbforme}, where we replace the volume constraint by an inequality constraint:

\begin{prop}\label{prop:freebdy}
If $\Om^*$ is a solution to 
\begin{equation}\label{eq:pbforme3}
\lambda_1(\Omega^*)=\min\Big\{\lambda_1(\Omega);\;\Omega\, \textrm{quasi-open subset of}\, M, \vol_g(\Omega)\leq m\Big\},
\end{equation}
 then $u_{\Om^*}$ (solution of \eqref{eq:lambda1}) is solution of
\begin{equation}\label{eq:pbfonc0}
\int_{M}\|\nabla^g u_{\Om^*}\|_g^2\, \textnormal{dvol}_g=\min\left\{\int_{M}\|\nabla^g w\|_g^2\, \textnormal{dvol}_g\,,\;w\in H^1(M),\;
\int_M w^2\, \textnormal{dvol}_g=1,\; \vol_g(\Om_{w})\le m \right\},
\end{equation}
and is also solution of
\begin{equation}\label{eq:pbfonc}
J(u_{\Om^*})=\min\left\{J(w), \;w\in H^1(M),\;\textnormal{with }\vol_g(\Om_{w})\le m\right\},
\end{equation}
where we denote $$\displaystyle{J(w):=\int_M\|\nabla^g w\|_g^2\, \textnormal{dvol}_g -\lambda(m) \int_M w^2\, \textnormal{dvol}_g},\;\;\lambda(m):=\inf\Big\{\lambda_1(\Omega);\;\Omega\, \textrm{quasi-open subset of}\, M, \vol_g(\Omega)\leq m\Big\}$$
$$\textnormal{ and }\quad\;\;\Om_{w}=\{w\neq 0\}.$$
Reciprocally, if $u$ solves \eqref{eq:pbfonc0} or \eqref{eq:pbfonc}, then the set $\Om_{u}=\{u\neq 0\}$ is a solution to \eqref{eq:pbforme3}.
\end{prop}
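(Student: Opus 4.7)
The plan is to exploit the Rayleigh-quotient characterization of $\lambda_{1}$ together with the elementary observation that whenever $w\in H^1(M)$ satisfies $\vol_g(\Om_{w})\leq m$, the set $\Om_{w}=\{w\neq 0\}$ is quasi-open, admissible for \eqref{eq:pbforme3}, and one has $w\in H^1_{0}(\Om_{w})$; these are standard consequences of the capacity theory recalled in Section 2. Consequently, for every such $w$,
\[
\int_{M}\|\nabla^g w\|_{g}^{2}\,\textrm{dvol}_g \;\geq\; \lambda_{1}(\Om_{w})\int_{M} w^{2}\,\textrm{dvol}_g \;\geq\; \lambda(m)\int_{M} w^{2}\,\textrm{dvol}_g,
\]
so in particular $J(w)\geq 0$ on the admissible class, with $J(0)=0$. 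This single chain of inequalities is the backbone of everything that follows.

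First I would establish the equivalence between \eqref{eq:pbforme3} and \eqref{eq:pbfonc0}. For the direct implication, if $\Om^*$ solves \eqref{eq:pbforme3}, the normalization in \eqref{eq:edp} and the inclusion $\Om_{u_{\Om^*}}\subset\Om^*$ quasi-everywhere (hence of volume at most $m$) make $u_{\Om^*}$ admissible for \eqref{eq:pbfonc0}, and the previous chain applied to any competitor $w$, together with $\int_{M}\|\nabla^g u_{\Om^*}\|_{g}^{2}\,\textrm{dvol}_g=\lambda_{1}(\Om^*)=\lambda(m)$, yields optimality. Conversely, if $u$ solves \eqref{eq:pbfonc0}, then $\Om_{u}$ is admissible for \eqref{eq:pbforme3} and the Rayleigh quotient gives $\lambda_{1}(\Om_{u})\leq\int_{M}\|\nabla^g u\|_{g}^{2}\,\textrm{dvol}_g$; testing \eqref{eq:pbfonc0} against the normalized first eigenfunction $u_{\Om}$ of an arbitrary admissible set $\Om$ then yields $\lambda_{1}(\Om_{u})\leq\lambda_{1}(\Om)$, so $\Om_{u}$ achieves the minimum.

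The equivalence with \eqref{eq:pbfonc} follows from the sign of $J$. On one hand, any $u_{\Om^*}$ solving \eqref{eq:pbfonc0} satisfies $J(u_{\Om^*})=\lambda_{1}(\Om^*)-\lambda(m)=0$, so it also minimizes $J$ thanks to the inequality $J\geq 0$. On the other hand, for a nontrivial minimizer $u$ of \eqref{eq:pbfonc} we have $J(u)\leq J(0)=0$, which combined with $J\geq 0$ forces both inequalities in the displayed chain to be equalities; the first equality means $u$ is proportional to $u_{\Om_{u}}$, while the second means $\Om_{u}$ is a minimizer of \eqref{eq:pbforme3}, after which $u/\|u\|_{L^2(M)}$ automatically solves \eqref{eq:pbfonc0}.

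The main (and essentially the only) non-routine ingredient is the capacity-theoretic setup: one must know that the level set $\Om_{w}$ of an arbitrary $w\in H^1(M)$ is quasi-open, that $w$ lies in $H^1_{0}(\Om_{w})$, and that \eqref{eq:lambda1} then yields a meaningful $\lambda_{1}(\Om_{w})\geq\lambda(m)$. Once these preliminaries (already recalled in Section 2) are in hand, linking the three formulations reduces to the variational bookkeeping described above, centered on the displayed two-step inequality.
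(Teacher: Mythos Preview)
Your proof is correct and follows essentially the same route as the paper: both arguments rest on the single chain $\int_M\|\nabla^g w\|_g^2\ge\lambda_1(\Om_w)\int_M w^2\ge\lambda(m)\int_M w^2$ (equivalently $J(w)\ge0$) together with $J(u_{\Om^*})=0$, and the remaining implications are unwound exactly as you describe. If anything you are slightly more explicit than the paper about the normalization step in the direction \eqref{eq:pbfonc}$\Rightarrow$\eqref{eq:pbfonc0} and about excluding the trivial minimizer $u=0$, but the underlying strategy is identical.
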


As in Remark \ref{rk:connectedness}, the result fails to be valid if $M$ is disconnected.

\medskip

\begin{proof}
Let $\Om^*$ be a solution to \eqref{eq:pbforme3}. The fact that $u_{\Om^*}$ solves \eqref{eq:pbfonc0} comes easily using the variational formulation \eqref{eq:lambda1}. 
In order to see that $u_{\Om^*}$ is also solution of \eqref{eq:pbfonc}, we apply \eqref{eq:pbfonc0} to $\frac{w}{\|w\|_{2}}$, leading to $J(w)\geq 0$ for all $w\in H^1(M)$ such that $\vol_g(\Om_{w})\leq m $, and we conclude noticing that $J(u_{\Om^*})=0$.

Let now $u$ be a solution to \eqref{eq:pbfonc0} (in the case where $u$ is assumed to solve \eqref{eq:pbfonc}, it is clear that $u$ then also solves \eqref{eq:pbfonc0}). Then given $\Om$ a quasi-open subset of $M$ with $\vol_{g}(\Om)\leq m$, optimizing \eqref{eq:pbfonc0} in $w\in H^1_{0}(\Om)$ such that $\|w\|_{L^2(\Om)}=1$ gives that
$$\lambda_{1}(\Om_{u})=\int_M\|\nabla^g u\|_g^2\, \textnormal{dvol}_g\leq \lambda_{1}(\Om)$$
which concludes the proof.\qed
\end{proof}

The next result deals with the question of saturation of the constraint, and explains how one can link solutions of \eqref{eq:pbfonc} with solutions of \eqref{eq:pbforme2}. This will be used in Section \ref{sect:reg}.
\begin{prop}[Saturation of the constraint]\label{prop:saturation}
We assume $M$ to be connected. If $u$ is a solution to \eqref{eq:pbfonc}, then $\vol_{g}(\Om_{u})=m$. Equivalently, if $\Om^*$ is a solution to \eqref{eq:pbforme3}, then $\vol_{g}(\Om^*)=m$ and $\Om^*=\{u_{\Om^*}\neq 0\}$.
\end{prop}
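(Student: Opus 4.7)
The plan is to argue by contradiction: I assume that $u$ is a nontrivial solution of \eqref{eq:pbfonc} with $\vol_g(\Om_u)<m$ (the trivial $u\equiv 0$ is an artifact of $J$ and corresponds to no genuine solution of \eqref{eq:pbforme3} via Proposition~\ref{prop:freebdy}), and I aim to show that $u$ must then solve $-\Delta_g u=\lambda(m)\,u$ on all of $M$. The strong minimum principle, together with the connectedness of $M$, will then produce a contradiction with the standing strict inequality $\vol_g(\Om_u)<m<\vol_g(M)$.

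The first step is to derive the Euler--Lagrange equation globally on $M$. Set $\delta:=m-\vol_g(\Om_u)>0$. For any $v\in C_c^\infty(M)$ whose support has volume at most $\delta$, the level set of the perturbation satisfies $\Om_{u+\eps v}\subseteq \Om_u\cup\textnormal{supp}(v)$, so $\vol_g(\Om_{u+\eps v})\le m$ for every $\eps\in\R$, meaning the perturbation is admissible in \eqref{eq:pbfonc}. Minimality of $u$ then gives the first-order condition
\[
\int_M g(\nabla^g u,\nabla^g v)\,\textnormal{dvol}_g=\lambda(m)\int_M uv\,\textnormal{dvol}_g.
\]
Since every point of $M$ has such a neighborhood (take a small enough geodesic ball), and distributional equations are local, this yields $-\Delta_g u=\lambda(m)\,u$ in the sense of distributions on $M$.

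By interior elliptic regularity $u\in C^\infty(M)$. Noting that $|u|$ is also a minimizer of \eqref{eq:pbfonc} (same Dirichlet energy, same $L^2$-norm, same level set $\Om_u$), I may assume $u\ge 0$. Since $\lambda(m)\ge 0$, one has $\Delta_g u=-\lambda(m)u\le 0$, so $u$ is a nonnegative superharmonic smooth function on a connected manifold without boundary. The strong minimum principle (equivalently the Harnack inequality for the elliptic operator $-\Delta_g-\lambda(m)$ with smooth coefficients) then provides the dichotomy $u\equiv 0$ or $u>0$ everywhere on $M$. Nontriviality rules out the first alternative, while the second forces $\Om_u=M$ and hence $\vol_g(\Om_u)=\vol_g(M)>m$, contradicting the starting assumption. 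The equivalent statement for $\Om^*$ follows immediately: if $\Om^*$ solves \eqref{eq:pbforme3}, then by Proposition~\ref{prop:freebdy} $u_{\Om^*}$ is a nontrivial solution of \eqref{eq:pbfonc}, so $\vol_g(\{u_{\Om^*}\ne 0\})=m$; combining this with the inclusion $\{u_{\Om^*}\ne 0\}\subseteq \Om^*$ (quasi-everywhere, from definition \eqref{eq:H10}) and the constraint $\vol_g(\Om^*)\le m$ forces $\vol_g(\Om^*)=m$ and $\Om^*=\{u_{\Om^*}\ne 0\}$ up to capacity-zero sets.

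The step that will require the most care is the strong minimum principle / Harnack step on a general connected Riemannian manifold: one has to localize Harnack's inequality in charts where $\Delta_g$ takes the form recalled in Section~2 (a uniformly elliptic operator with smooth coefficients) and then chain the estimate across overlapping geodesic balls to propagate a zero of $u$ from one point to all of $M$, relying on connectedness. Everything else in the argument is a fairly standard perturbation plus elliptic-regularity computation.
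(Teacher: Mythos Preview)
Your proof is correct and follows essentially the same approach as the paper: assume $\vol_g(\Om_u)<m$, use small-support perturbations to derive $-\Delta_g u=\lambda(m)u$ globally on $M$, then invoke nonnegativity and the strong minimum principle on the connected manifold to reach a contradiction. The paper's proof is nearly identical, citing an external reference for the constant sign of $u$ where you argue via $|u|$, and handling the $\Om^*$ statement by applying the strong maximum principle inside $\Om^*$ rather than your inclusion-plus-volume comparison; these are cosmetic differences only.
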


\begin{proof}
Let $u$ be a solution of \eqref{eq:pbfonc}. Assume to the contrary that $\vol_g(\Om_{u})<m$; then $w=u+t\varphi$ is admissible in \eqref{eq:pbfonc} for any $\varphi$ smooth with small support, and $|t|$ small. It implies
 $$0=\frac{dJ(u+t\varphi)}{dt}_{|t=0}=2\int_{M}g(\nabla^g u,\nabla^g\varphi)\, \textnormal{dvol}_g-2\lambda(m)\int_{M}u\varphi\, \textnormal{dvol}_g$$
which means  $-\Delta_g u=\lambda(m)u$ in $M$ in the sense of distribution. 
Since $u$ has constant sign (see for example \cite[Remark 2.10]{BHP05Lip}), say nonnegative, from strong maximum principle and connectedness of $M$, we get $u>0$ on $M$, which contradicts  $m<\vol_g(M)$  (in fact, in this case $u$ is constant because $\lambda(m)$ would be the first eigenvalue of $M$, which is simple).

If now $\Om^*$ solves \eqref{eq:pbforme3}, then $u_{\Om^*}$ solves \eqref{eq:pbfonc} and has constant sign, say nonnegative. From the strong maximum principle, $u_{\Om^*}>0$ on $\Om^*$, and from the previous point, $\vol(u_{\Om^*})=m$, which concludes the proof.
\qed

\end{proof}

Using Proposition \ref{prop:freebdy}, one can deduce existence of a solution for \eqref{eq:pbforme2}:\\

\noindent{\bf Proof of Theorem \ref{th:existence0}:}
The most convenient formulation for existence is \eqref{eq:pbfonc0}: it is clear that the set of admissible functions is nonempty and that the infimum in \eqref{eq:pbfonc0} is nonnegative. Taking any minimizing sequence $(u_{k})_{k\in\N}$, and using that it is bounded in $H^1(M)$, we get that there exists $u^*\in H^1(M)$ such that, up to a subsequence:
$$\nabla^g u_{k}\rightharpoonup \nabla^g u^*\;\;\textrm{ weakly in }L^2(M),\;\;u_{k}\rightarrow u^*\;\textrm{ a.e. and strongly in }L^2(M),$$
(we use the weak-compactness of closed bounded convex sets in the Hilbert space $H^1(M)$ and the Rellich-Kondrachov Theorem about the compact embedding $H^1(M)\hookrightarrow L^2(M)$ when $M$ is compact, see Theorem 2.34 in \cite{A98Som}). 
We therefore get 
 \[
\vol_g(\Om_{u^*})\leq \liminf_{k\to\infty}\vol_g(\Om_{u_{k}}), \;\;\;\;\;\;\;\int_{M}{u^*}^2\, \textnormal{dvol}_g=1\]
\[\textnormal{and} \qquad \int_{M}\|\nabla^g u^*\|_g^2\, \textnormal{dvol}_g \leq \liminf_{k\to\infty}\int_{M}\|\nabla^g u_{k}\|_g^2\, \textnormal{dvol}_g
\]
so $u^*$ solves \eqref{eq:pbfonc0}, and from Proposition \ref{prop:freebdy}, the set $\Om^*=\{u^*\neq 0\}$ solves \eqref{eq:pbforme3}. In order to build a solution to \eqref{eq:pbforme2}, we simply check that there exists a quasi-open set $\widetilde{\Om^*}\supset\Om^*$ such that $\vol_{g}(\widetilde{\Om^*})=m$. From monotonicity, $\lambda_{1}(\widetilde{\Om^*})\leq \lambda_{1}(\Om^*)$ and therefore $\widetilde{\Om^*}$ solves \eqref{eq:pbforme2}.
\qed

%
%
%

\begin{remark}\rm
Let us notice that it is possible to adapt to the Riemannian setting some more general existence result. Using the method in \cite[Chapter 2, Theorem 2.1]{H17Sha} one can prove existence for
 $$\min\{f(\lambda_{1}(\Om),\ldots,\lambda_{k}(\Om)),\;\;\Om\textrm{ quasi-open }\subset M,\;\vol_g(\Om)= m\},$$
 where $f$ is lower semi-continuous and non-decreasing in each variable, $M$ is compact, and $\lambda_{1}(\Om)\leq \ldots\leq \lambda_{k}(\Om)$ are the $k$ first eigenvalues of the Laplace-Beltrami operator with Dirichlet boundary conditions.
  
Even more generally, it is also possible to adapt the theory of Buttazzo and Dal Maso (see \cite{BD93Ane}) to the case of a compact  Riemannian manifold: one obtains that there exists a solution of the minimization problem:
$$\min\{F(\Om),\;\;\Om\textrm{ quasi-open }\subset M,\;\vol_g(\Om)= m\}$$
 if $M$ is a compact Riemannian manifold, $F$ is a shape functional defined on quasi-open subsets of $M$,  decreasing with respect to set inclusion, and lower-semicontinuous for a suitable topology (namely $\gamma$-convergence, see \cite{BD93Ane} for details).
\end{remark}

\subsection{The non compact case: counter-example to existence}\label{ssect:nonexist}
Through all this subsection, $M$ will be the minimal catenoid in $\mathbb{R}^3$, i.e. the surface given by the equation \begin{equation}\label{catenoid}
x^2+y^2 = \lambda^2 \, \cosh^2\left(\frac{z}{\lambda}\right)
\end{equation}
for some $\lambda > 0$, where $(x,y,z)$ are the coordinates of $\mathbb{R}^3$. It is the surface of revolution generated by a catenary, i.e. the curve 
\[
s \to  \lambda \left(\cosh \left(\frac{s}{\lambda}\right), s \right)\,.
\] 
We consider the natural metric $g$ on $M$, i.e. the metric induced by the immersion of $M$ in $\mathbb{R}^3$. It is useful to express the metric of this surface in the form
\[
\textnormal{d}s^2 = (f(t))^2 \textnormal{d}\theta^2 + \textnormal{d}t^2
\]
for some function $f$. For this we need the length parameterization of the catenary, i.e.
\[
t \to \left( \sqrt{t^2 + \lambda^2} , \lambda \arcsinh \left(\frac{t}{\lambda}\right) \right)\,.
\]
The catenoid is then the image of the annulus $S^1 \times \mathbb{R}$ in $\mathbb{R}^3$ by the application
\[
F(\theta, t) \to \left( \sqrt{t^2 + \lambda^2}\, \cos \theta \, , \, \sqrt{t^2 + \lambda^2}\, \sin \theta \, , \, \lambda\, \arcsinh \left(\frac{t}{\lambda}\right)  \right) 
\]
and the metric is given by
\[
\textnormal{d}s^2 = \left \langle \frac{\partial F}{\partial \theta} , \frac{\partial F}{\partial \theta} \right \rangle \, \textnormal{d}\theta^2 +  \left \langle \frac{\partial F}{\partial t} , \frac{\partial F}{\partial t} \right \rangle \, \textnormal{d}t^2  + 2\left \langle \frac{\partial F}{\partial \theta} , \frac{\partial F}{\partial t} \right \rangle \, \textnormal{d}\theta\, \textnormal{d}t = (t^2 + \lambda^2)\, \textnormal{d}\theta^2 + \textnormal{d}t^2
\]

\medskip

 In this section we prove the following result: 
\begin{theorem}\label{th:nonexist} For any volume $m>0$, problems \eqref{eq:pbforme} or \eqref{eq:pbforme2} have no solution when $M$ is the minimal catenoid defined in \eqref{catenoid}.
\end{theorem}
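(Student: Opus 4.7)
The plan is to prove that the Faber--Krahn profile of the catenoid satisfies $FK(m) = \lambda_1(B^{\mathrm{eucl}}_m)$ for every $m>0$, where $B^{\mathrm{eucl}}_m\subset\mathbb{R}^2$ is the Euclidean disk of area $m$, while this common value is not achieved by any admissible quasi-open set in $M$. In view of Proposition~\ref{prop:freebdy} (which equates the infima in \eqref{eq:pbforme3} and \eqref{eq:pbforme2}, and whose hypotheses are met since the catenoid is connected), this absence of a minimizer forces the absence of a solution to both \eqref{eq:pbforme} and \eqref{eq:pbforme2}, proving Theorem~\ref{th:nonexist}.

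First I would establish the strict lower bound $\lambda_1(\Omega) > \lambda_1(B^{\mathrm{eucl}}_m)$ for every admissible quasi-open $\Omega \subset M$ of volume $m$. Invoking Proposition~\ref{p1}, this reduces to the strict isoperimetric comparison $I_M(m) > 2\sqrt{\pi m}$ on the catenoid, valid for every $m > 0$. Such strictness is consistent with the fact that the Gaussian curvature $K = -1/(t^2 + \lambda^2)^2$ of $M$ is strictly negative everywhere; one proves it either by Gauss--Bonnet combined with the Bol--Fiala inequality on the topological disks of $\Omega$, by lifting $\Omega$ to the Cartan--Hadamard universal cover of the catenoid (on which the classical Weil--Bol inequality is strict), or by exploiting the rotational symmetry of $M$ to classify isoperimetric regions directly.

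For the matching upper bound $FK(m) \leq \lambda_1(B^{\mathrm{eucl}}_m)$, I would exhibit an explicit minimizing sequence by letting geodesic balls drift to infinity along the catenoid. Pick $p_k\in M$ with $|t(p_k)|\to \infty$; since all covariant derivatives of the curvature decay along the ends of $M$, the pull-back of $g$ by $\exp_{p_k}$ converges in $C^\infty$ on any fixed Euclidean ball to the flat metric. Choose the radius $r_k$ so that $\mathrm{Vol}_g(B^g_{r_k}(p_k)) = m$; then $r_k$ tends to the Euclidean radius of $B^{\mathrm{eucl}}_m$, and a transplantation argument (using the first Dirichlet eigenfunction of $B^{\mathrm{eucl}}_m$ as a test function on $B^g_{r_k}(p_k)$ via the exponential chart, and vice versa) yields $\lambda_1(B^g_{r_k}(p_k)) \to \lambda_1(B^{\mathrm{eucl}}_m)$ through the variational characterization \eqref{eq:lambda1}.

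Combining these two estimates gives $FK(m) = \lambda_1(B^{\mathrm{eucl}}_m)$, while the strict lower bound forbids any admissible set from achieving this value. The main obstacle I anticipate is the global strict isoperimetric inequality on $M$: because the catenoid is not simply connected, one cannot directly invoke a Cartan--Hadamard-type result, and the topology of competitor sets must be handled carefully, presumably by passing to the universal cover or by exploiting the explicit structure of $M$ as a surface of revolution. The asymptotic convergence $\lambda_1(B^g_{r_k}(p_k)) \to \lambda_1(B^{\mathrm{eucl}}_m)$, by contrast, is a standard consequence of spectral stability under smooth convergence of the ambient metric on a fixed chart.
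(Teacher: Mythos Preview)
Your proposal is correct and follows essentially the same two-step strategy as the paper: the strict lower bound $\lambda_1(\Omega)>\lambda_1(B_m^{\mathrm{eucl}})$ via the isoperimetric comparison (Propositions~\ref{CR} and~\ref{p1}), and the matching upper bound via geodesic balls drifting to infinity (Proposition~\ref{p2}). The only noteworthy differences are in implementation: the paper imports the strict isoperimetric inequality on the catenoid from \cite{CR08The} rather than deriving it from Bol--Fiala or a covering argument, and it establishes the eigenvalue convergence of the escaping geodesic balls by an implicit function theorem argument on the rescaled problem, whereas you outline a direct transplantation of eigenfunctions---both routes are standard and yield the same conclusion.
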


The proof of Theorem \ref{th:nonexist} follows from the following Propositions \ref{p1} and \ref{p2}. We start by recalling a well known fact about the isoperimetric problem in the minimal catenoid.

\begin{prop}\label{CR} (\cite{CR08The}). 
Let $m > 0$. Then, for all  regular open set $\Omega$ in $M$ whose volume is equal to $m$ we have
\[
P(\Omega) > P(B_m)
\]
where $B_m$ is a Euclidean ball of volume $m$, and $P$ represents the perimeter, i.e. the area of the boundary of the open set.
\end{prop}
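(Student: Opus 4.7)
The plan is to combine classical curvature-based isoperimetric inequalities with rotational symmetrization, exploiting the strictly negative Gauss curvature of the catenoid $M$. First I would reduce to connected $\Omega$: since the Euclidean isoperimetric function $m\mapsto 2\sqrt{\pi m}$ is strictly concave (hence strictly subadditive), and perimeter is additive on disjoint components, the strict inequality $P(\Omega_i) > P(B_{m_i})$ for each connected component of $\Omega$ implies the strict inequality for $\Omega$. Since the catenoid is topologically the cylinder $\mathbb{S}^1\times\mathbb{R}$, a connected regular open region is either a topological disk (simply connected) or a topological annulus encircling the waist; I would treat these two cases separately.

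In the coordinates $(\theta,t)$ of the parametrization $F$ given in the excerpt, the induced metric is $(t^2+\lambda^2)\,d\theta^2+dt^2$, and a direct computation yields the Gauss curvature $K(\theta,t)=-\lambda^2/(t^2+\lambda^2)^2$, which is strictly negative everywhere. For a simply connected $\Omega$, the Bol--Fiala inequality on surfaces with $K\leq 0$ yields $L(\partial\Omega)^2\geq 4\pi A(\Omega)$, and the strict negativity of $K$ precludes the equality case (which would require constant curvature $0$ on $\Omega$), so $P(\Omega)>2\sqrt{\pi A(\Omega)}=P(B_m)$.

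The annular case is the main obstacle, since Bol--Fiala does not apply directly ($\chi(\Omega)=0$ in the Gauss--Bonnet term). Here I would apply Steiner-type rotational symmetrization associated with the isometric $S^1$-action on $M$---at each height $t$ replace $\Omega\cap\{t=\mathrm{const}\}$ by an arc of equal length---a classical procedure on warped-product surfaces which preserves area and does not increase perimeter. Combined with the $z\mapsto -z$ reflection symmetry of the catenoid and further symmetrization arguments, this reduces the problem to the model symmetric band $\Omega_T=\{|t|\leq T\}$, where direct computation gives $A(\Omega_T)=4\pi\int_0^T\sqrt{t^2+\lambda^2}\,dt$ and $P(\Omega_T)=4\pi\sqrt{T^2+\lambda^2}$. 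The desired $P^2>4\pi A$ then reduces to the elementary one-variable analytic inequality
$$T^2+\lambda^2>\int_0^T\sqrt{t^2+\lambda^2}\,dt\quad\text{for all }T>0,$$
which follows from a study of the strictly convex function $h(T):=T^2+\lambda^2-\int_0^T\sqrt{t^2+\lambda^2}\,dt$ (positive at $T=0$, with a unique interior critical point where one checks $h>0$ directly). The most delicate point---and the one for which I would rely on \cite{CR08The} for a detailed treatment---is rigorously justifying that the symmetrization scheme reduces the full class of annular regions, not just rotationally invariant bands, to this model case.
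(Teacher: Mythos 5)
The paper does not actually prove this proposition: it is stated as a direct citation to \cite{CR08The} and then used as a black box in Proposition~\ref{p1}. There is therefore no ``paper's own proof'' to compare against; I can only assess your reconstruction on its own merits.

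Your overall skeleton is plausible and matches the standard approach to isoperimetry on warped-product surfaces: reduce to connected pieces, separate simply connected regions from regions encircling the waist, kill the former with Bol--Fiala for $K<0$ (correct: strict negativity of $K$ rules out equality in $L^2\ge 4\pi A$), and handle the encircling regions by rotational symmetrization followed by an explicit computation. Your Gauss curvature formula $K=-\lambda^2/(t^2+\lambda^2)^2$ is correct, your formulas $A(\Omega_T)=4\pi\int_0^T\sqrt{t^2+\lambda^2}\,dt$ and $P(\Omega_T)=4\pi\sqrt{T^2+\lambda^2}$ are correct, and the elementary inequality $T^2+\lambda^2>\int_0^T\sqrt{t^2+\lambda^2}\,dt$ checks out: $h(T):=T^2+\lambda^2-\int_0^T\sqrt{t^2+\lambda^2}\,dt$ has $h(0)=\lambda^2>0$, $h'(T)=2T-\sqrt{T^2+\lambda^2}$, a unique critical point at $T^*=\lambda/\sqrt3$ where $h(T^*)=\lambda^2\bigl(1-\tfrac14\ln 3\bigr)>0$, and $h'\to+\infty$.

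Two genuine gaps remain. First, the dichotomy ``connected $\Rightarrow$ disk or annulus'' is false as stated: a connected regular open set on the cylinder can have many boundary circles and free fundamental group (e.g.\ a band with several disks removed). You need to first fill in the bounded complementary components; this strictly decreases perimeter and strictly increases area, and since $m\mapsto 2\sqrt{\pi m}$ is increasing, proving the inequality for the filled-in set implies it for the original. After hole-filling, the disk/annulus dichotomy does hold. Second, and more substantively, rotational symmetrization does \emph{not} by itself reduce an arbitrary encircling region to the model band $\{|t|\le T\}$: even a rotationally symmetric region can be a union of several disjoint annular bands located at various heights, or a single band $\{a<t<b\}$ not symmetric about $t=0$. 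One still needs a monotonicity argument in $t$ (exploiting that $f(t)=\sqrt{t^2+\lambda^2}$ is minimized at $t=0$ and increasing in $|t|$) to merge and re-center the bands. So the sentence ``this reduces the problem to the model symmetric band $\Omega_T$'' overstates what symmetrization alone achieves. Since you explicitly defer the hard symmetrization analysis to \cite{CR08The}---exactly as the paper itself does---this is acceptable as a sketch, but the additional rearrangement step should at least be flagged.
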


\medskip

The Faber-Krahn inequality allows to obtain the same result for the Faber-Krahn profile. 

\begin{prop}\label{p1}
Let $m > 0$. Then, for all  regular open set $\Omega$ in $M$ whose volume is $m$ we have
\[
\lambda_1(\Omega) > \lambda_1(B_m)
\]
where $B_m$ is a Euclidean ball of volume $m$. 
\end{prop}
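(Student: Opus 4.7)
The plan is to carry out a Pólya--Szegő type argument via Schwarz symmetrization, using Proposition \ref{CR} as the pointwise isoperimetric ingredient.

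First I would set up symmetrization. Let $u = u_\Omega$ denote the first normalized eigenfunction on $\Omega$, which we may assume is nonnegative (and in fact positive inside $\Omega$) thanks to the smoothness of $\Omega$ and the variational characterization \eqref{eq:lambda1}. Let $u^*$ be the Schwarz symmetric decreasing rearrangement of $u$, viewed as a radial function on the Euclidean disk $B_m \subset \mathbb{R}^2$ of area $m$. Equimeasurability gives $\|u^*\|_{L^2(B_m)} = \|u\|_{L^2(\Omega)} = 1$, so the $L^2$-denominator in the Rayleigh quotient is preserved. The task reduces to proving the strict Dirichlet-energy inequality
\[
\int_M \|\nabla^g u\|_g^2 \, \textnormal{dvol}_g \;>\; \int_{B_m} |\nabla^e u^*|^2 \, \textnormal{d}x.
\]

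Next I would run the classical level-set computation. Set $\mu(t) = \vol_g(\{u > t\})$. By Sard's theorem and smoothness of $u$ inside $\Omega$, for a.e.\ $t \in (0, \max u)$ the level set $\{u=t\}$ is a smooth curve and $\{u>t\}$ is a regular open subset of $M$. The co-area formula gives
\[
\int_M \|\nabla^g u\|_g^2 \, \textnormal{dvol}_g = \int_0^{\max u}\!\!\left(\int_{\{u=t\}} \|\nabla^g u\|_g \, \textnormal{d}\mathcal{H}^1_g\right) \textnormal{d}t, \qquad -\mu'(t) = \int_{\{u=t\}} \frac{1}{\|\nabla^g u\|_g}\, \textnormal{d}\mathcal{H}^1_g,
\]
and Cauchy--Schwarz then yields
\[
\int_{\{u=t\}} \|\nabla^g u\|_g \, \textnormal{d}\mathcal{H}^1_g \; \geq \; \frac{P(\{u>t\})^2}{-\mu'(t)}.
\]
At this point I invoke Proposition \ref{CR} applied to $\{u>t\}$: since it is a regular open subset of $M$ with area $\mu(t) > 0$, one has $P(\{u>t\}) > P(B_{\mu(t)})$, where $B_{\mu(t)}$ is the Euclidean disk of the same area.

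For the symmetrized function $u^*$, the level sets $\{u^* = t\}$ are circles and $\|\nabla^e u^*\|$ is constant on them, so Cauchy--Schwarz becomes an equality. By equimeasurability $\vol_e(\{u^* > t\}) = \mu(t)$, whence $\{u^* > t\} = B_{\mu(t)}$ and
\[
\int_{\{u^*=t\}} |\nabla^e u^*| \, \textnormal{d}\mathcal{H}^1 \;=\; \frac{P(B_{\mu(t)})^2}{-\mu'(t)}.
\]
Combining and integrating in $t$ yields $\int_M \|\nabla^g u\|_g^2 \geq \int_{B_m} |\nabla^e u^*|^2$, and the Rayleigh quotient comparison gives $\lambda_1(\Omega) \geq \lambda_1(B_m)$.

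The delicate step is to obtain strict inequality. I would argue that for each admissible $t$ one has $P(\{u>t\})^2 - P(B_{\mu(t)})^2 > 0$, the set of such $t$ has positive Lebesgue measure in $(0, \max u)$, and $-\mu'(t)$ is finite for a.e.\ $t$; consequently the integrated deficit is strictly positive. Putting it all together,
\[
\lambda_1(\Omega) = \int_M \|\nabla^g u\|_g^2 \, \textnormal{dvol}_g \;>\; \int_{B_m} |\nabla^e u^*|^2 \, \textnormal{d}x \; \geq \; \lambda_1(B_m),
\]
where the last inequality is the variational characterization of $\lambda_1(B_m)$ applied to the admissible competitor $u^* \in H^1_0(B_m)$. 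The main obstacle to watch is ensuring that the strict isoperimetric gap does not collapse after integration: since the strict inequality in Proposition \ref{CR} holds for \emph{every} regular open set (not merely in an asymptotic sense), this concern is handled by the fact that almost every superlevel set is admissible for Proposition \ref{CR}.
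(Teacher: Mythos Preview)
Your argument is correct and is essentially the classical P\'olya--Szeg\H{o} proof of the Faber--Krahn inequality, which is exactly what the paper invokes by citing \cite[Chapter IV, \S2, Theorem 2 and Remark 1]{C84Eig}. The only structural difference is in how the strict inequality is obtained: the paper appeals to the equality-case characterization of the cited theorem (equality in Faber--Krahn forces $\Omega$ to be isometric to $B_m$, which would force $P(\Omega)=P(B_m)$, contradicting Proposition~\ref{CR}), whereas you inject the strict isoperimetric gap from Proposition~\ref{CR} directly at the level-set stage to get a strict Dirichlet-energy inequality. Your route is more self-contained and avoids needing the full rigidity statement of Faber--Krahn; the paper's route is shorter because it outsources the symmetrization computation to a reference.
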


{\it Proof.} By the Faber-Krahn Theorem (see \cite{C84Eig}, chapter IV, section 2, Theorem 2 and Remark 1), if for all $m>0$ the inequality
\[
P(\Omega) \geq P(B_m)
\]
holds for every  regular open set $\Omega$ 
with equality if and only if $\Omega$ is isometric to $B_m$, then inequality
\begin{equation}\label{inequ:lambda}
\lambda_1(\Omega) \geq \lambda_1(B_m)
\end{equation}
holds for every open set (non necessarily regular) $\Omega$ with equality if and only if $\Omega$ is isometric to $B_m$. The result follows then from Proposition \ref{CR} because if there were in $M$ a regular open set $\Omega$ isometric to $B_m$ then we would have $P(\Omega) = P(B_m)$\hfill $\Box$ 

\medskip

 Note that in the previous proof, the fact that there does not exist in $M$ a regular open set $\Omega$ isometric to $B_m$ follows also from the Egregium Theorem because the Gauss curvature of $M$ is negative). 
\medskip

Now we want to show that in $M$ there does not exist any minimizer of $
\Omega \mapsto \lambda_1(\Omega)
$
under volume constraint. In order to do that, we will show that there exists a minimizing sequence of domains with the same volume whose first eigenvalue converges to the first eigenvalue of a Euclidean ball. We denote by $B_r$ the ball of radius $r$ in $\mathbb{R}^3$, and $e$ will represents the Euclidean metric as usual. We will need the following:


\begin{prop}\label{p2}
For any {$m>0$}, there exists a sequence $B^g_{r_j}(p_j)$, $j \in \mathbb{N}$ such that
\begin{enumerate}
\item ${\rm Vol}_g(B^g_{r_j}(p_j)) = m$ for all $j \in \mathbb{N}$, and
\item $\lambda_1(B^g_{r_j}(p_j)) \to \lambda_1(B_1)$ for $j \to +\infty$. 
\end{enumerate}
\end{prop}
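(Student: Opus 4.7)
The strategy is to exploit the asymptotic flatness of the two ends of the catenoid: since the Gauss curvature $K=-\lambda^2/(t^2+\lambda^2)^2$ (together with all its derivatives) tends to $0$ as $|t|\to\infty$, a geodesic ball of bounded radius centered at a point escaping to one end of $M$ will look more and more Euclidean, and its first Dirichlet eigenvalue should accumulate on the Faber--Krahn value of a Euclidean disk of the same area. Fix $p_j := F(0,t_j)$ with $t_j\to+\infty$ and let $r_\star := \sqrt{m/\pi}$ be the Euclidean radius of a $2$-disk of area $m$; note that the injectivity radius at $p_j$ diverges (the shortest closed geodesic through $p_j$ is the parallel of length $2\pi\sqrt{t_j^2+\lambda^2}$), so the geodesic balls under consideration are, for $j$ large, standard smooth disks in $M$. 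To quantify the flatness, introduce on a neighborhood of $p_j$ the rescaled coordinates $\phi:=\sqrt{t_j^2+\lambda^2}\,\theta$ and $\sigma:=t-t_j$; the formula $\mathrm{d}s^2=(t^2+\lambda^2)\mathrm{d}\theta^2+\mathrm{d}t^2$ then becomes
$$g \;=\; \frac{(t_j+\sigma)^2+\lambda^2}{t_j^2+\lambda^2}\,\mathrm{d}\phi^2 + \mathrm{d}\sigma^2.$$
For every fixed $R>0$ and $j$ large enough, the square $Q_R := \{|\phi|<R,\,|\sigma|<R\}$ is a smooth chart around $p_j$ in which the metric coefficients converge in $C^k$ (any $k$) to those of the Euclidean metric $\mathrm{d}\phi^2+\mathrm{d}\sigma^2$. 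Consequently $d_g$ and $\mathrm{dvol}_g$, read in $(\phi,\sigma)$, converge uniformly on $Q_R$ to the Euclidean distance and to the Lebesgue measure.

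\textbf{Volume normalization and eigenvalue asymptotics.}
Set $V_j(s):=\vol_g(B^g_s(p_j))$; each $V_j$ is continuous and non-decreasing, and pointwise $V_j(s)\to \pi s^2$ by the previous paragraph. The classical theorem on monotone sequences converging to a continuous limit upgrades this to uniform convergence on compact subsets of $(0,+\infty)$, producing $r_j\to r_\star$ with $V_j(r_j)=m$, which gives assertion (1). For (2), Proposition \ref{p1} already provides the strict lower bound $\lambda_1(B^g_{r_j}(p_j))>\lambda_1(B_m)$, so only the matching upper bound remains. Given $\varepsilon>0$, pick $r'<r_\star$ with $\lambda_1(B^e_{r'})\le\lambda_1(B_m)+\varepsilon/2$, and let $u_{r'}\in H^1_0(B^e_{r'})$ be its first Dirichlet eigenfunction extended by zero. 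Transporting $u_{r'}$ to $M$ through the chart $(\phi,\sigma)$ produces an admissible test function: since $r'<r_\star=\lim r_j$ and $d_g\to d_e$ uniformly on $Q_R$, for $j$ large $\mathrm{supp}(u_{r'})\Subset B^g_{r_j}(p_j)$. Its Rayleigh quotient with respect to $g$,
$$\frac{\displaystyle\int g^{\alpha\beta}\,\partial_\alpha u_{r'}\,\partial_\beta u_{r'}\,\sqrt{|g|}\,\mathrm{d}\phi\,\mathrm{d}\sigma}{\displaystyle\int u_{r'}^2\,\sqrt{|g|}\,\mathrm{d}\phi\,\mathrm{d}\sigma},$$
converges as $j\to\infty$ to $\lambda_1(B^e_{r'})\le\lambda_1(B_m)+\varepsilon/2$ by the uniform convergence of the coefficients $g^{\alpha\beta}$ and $\sqrt{|g|}$ on $Q_R$. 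Hence $\lambda_1(B^g_{r_j}(p_j))\le\lambda_1(B_m)+\varepsilon$ for all $j$ large enough, and letting $\varepsilon\to 0$, combined with the Faber--Krahn lower bound, gives assertion (2).

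\textbf{Main obstacle.}
The entire conceptual content of the argument sits in the explicit flat asymptotics provided by the $(\phi,\sigma)$ coordinates; everything else is a standard perturbation argument. The only technical precaution is the strict containment $\mathrm{supp}(u_{r'})\Subset B^g_{r_j}(p_j)$, which forces us to shrink the test radius to $r'<r_\star$ and pass to the limit through the auxiliary parameter $\varepsilon$ rather than attempting to use the eigenfunction of $B^e_{r_\star}$ directly. This slight shrink is, in fact, unavoidable in general: the negative Gauss curvature of the catenoid makes the geodesic balls volume-wise larger than Euclidean disks of the same radius (G\"unther's comparison), so $r_j\le r_\star$ and the Euclidean disk $B^e_{r_\star}$ does not sit inside $B^g_{r_j}(p_j)$ for any finite $j$.
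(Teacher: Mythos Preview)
Your proof is correct and takes a genuinely different route from the paper's.

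\textbf{Comparison.} The paper also centers its argument on charts in which the metric degenerates to the Euclidean one (it uses the substitution $t^2=x^2+y^2$, $\theta=\arctan(y/x)$ rather than your $(\phi,\sigma)$), but it then attacks the eigenvalue convergence by a \emph{hard analysis} argument: it rescales the geodesic ball to a fixed Euclidean disk $B_R$ endowed with a perturbed metric $\hat g$, writes the eigenfunction as $\phi_1+w_p$ with $w_p\perp\ker(\Delta_e+\lambda_1)$, and applies the implicit function theorem in $C^{2,\alpha}$ to solve $N(\eps,w)=0$. This yields smooth dependence of the eigenvalue on the perturbation parameter and hence convergence.

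Your approach is more elementary and fully variational: the lower bound is simply Proposition~\ref{p1}, and the upper bound comes from transplanting a Euclidean eigenfunction on a slightly shrunk disk $B^e_{r'}$ as a test function, using only the $C^0$ convergence of the metric coefficients. This avoids the implicit function theorem machinery entirely, at the cost of not producing the actual eigenfunction on the geodesic ball. For the purposes of the non-existence Theorem~\ref{th:nonexist}, your argument delivers exactly what is needed with less overhead.

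\textbf{One minor inaccuracy.} You write that the shortest closed geodesic through $p_j$ is the parallel of length $2\pi\sqrt{t_j^2+\lambda^2}$. On a surface of revolution with metric $f(t)^2\mathrm{d}\theta^2+\mathrm{d}t^2$, a parallel $\{t=\mathrm{const}\}$ is a geodesic only when $f'(t)=0$; here $f(t)=\sqrt{t^2+\lambda^2}$ has $f'(t_j)\ne 0$ for $t_j>0$, so this parallel is not a geodesic. The \emph{conclusion} that the injectivity radius at $p_j$ diverges is nevertheless correct: the curvature is nonpositive (so no conjugate points), and any geodesic loop based at $p_j$ must wind around the neck and hence has length tending to infinity with $t_j$. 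Alternatively, the uniform $C^k$ convergence of $g$ to the Euclidean metric on the charts $Q_R$ already suffices to guarantee that $B^g_{r_j}(p_j)$ is an embedded disk for $j$ large, which is all you use.
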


\medskip

\noindent{\bf Proof.} It is convenient to consider just the superior part $M^+$ of the catenoid (i.e. the part with $t>0$). By changing the coordinates as 
\[
t^2 = x^2 + y^2 \qquad \theta = \arctan \frac{y}{x}
\]
(remember that $t>0$), $M^+$ can be seen as $\mathbb{R}^2$ with the metric
\[
g = dx^2 + dy^2 + \frac{\lambda^2}{(x^2 + y^2)^2}\left( xdy-ydx \right)^2
\]
If $p=(x_p, y_p) \in M^+$ and we take a chart centered at $p$, the metric writes 
\[
g = dx^2 + dy^2 + \frac{\lambda^2}{\left[(x+x_p)^2 + (y+y_p)^2\right]^2}\left[ (x+x_p)dy - (y+y_p)dx \right]^2
\]
and it is clear that if $\|p\|_e = \eps^{-1}$, being $\| \cdot \|_e$ the Euclidean norm in $\R^3$, we have
\begin{equation}\label{metr}
g = dx^2 + dy^2 + \mathcal{O}(\eps^2)(dx^2+dxdy+dy^2)
\end{equation}
for $\eps$ small enough, uniformly in a ball $B_r$ for a fixed value $r$.

\medskip

Now let $m>0$. Let $R$ be the radius of the  Euclidean ball of $\mathbb{R}^2$ of volume $m$. We claim that $M^+$ contains a geodesic ball $B^g_{r_q}(q)$ of volume $m$, and for every point $p\in M^+$ with $\|p\|_e \geq \|q\|_e$ there exists $r_p > 0$ such that the geodesic ball $B_{r_p}^g(p)$ is contained in $M^+$ and has volume $m$. Moreover we have that $r_p \to R$ when $\|p\|_e \to +\infty$. \\
{\it Proof of the claim.} If we take a point $q=(x_q,y_q,z_q) \in M^+$ (considering the catenoid in the form \eqref{catenoid}), the intersection of $M^+$ with the vertical cylinder of radius $\sqrt{x_q^2+y_q^2}-1$ and axis $(x,y)=(x_q,y_q)$ is a graph on a 2-dimensional ball of radius $\sqrt{x_q^2+y_q^2}-1$, and its volume tends to $\infty$ when $\|q\|_e \to \infty$. In particular, if $\|q\|_e$ is big enough, it contains a geodesic ball $B^g_{r_q}(q)$ of volume $m$, and this is true also if we replace $q$ by a point $p$ such that $\|p\|_e \geq \|q\|_e$. Now, if we take $\|p\|_e= \eps^{-1}$, it is clear that on a chart centered at $p$ with radius $2R$ we have the estimation \eqref{metr} uniformly in $\eps$, i.e. $g$ converges to the Euclidean metric uniformly in a chart of radius $2R$, and then $r_p \to R$.

\medskip

Now, fix $\eps>0$ small, take $\|p\|_e \in M^+$ such that $\|p\|= \eps^{-1}$ and consider the geodesic ball $B_{r_p}(p)$ of volume $m$. We want to show that the first eigenvalue of the Laplace-Beltrami operator on this ball converges to the first eigenvalue of the Laplacian on $B_R$ when $\eps \to 0$. 
To proceed, it will be more convenient to work on the fixed domain $B_R$, endowed with the metric (depending on $p$)
%
%
\[
\hat g = \left(\frac{r_p}{R}\right)^2 \, g\,.
\]
Let $\phi$ and $\hat \phi$ the first eigenfunction of the Laplace-Beltrami operator on $B_{r_p}^g(p)$ with respect to $g$ and $\hat g$ (normalized to 1 in the $L^2$ norm), and let $\lambda$ and $\hat \lambda$ the associated eigenvalue. We have
\begin{equation}
\label{formula-new}
\left\{
\begin{array}{rcccl}
	\Delta_{\hat{g}} \, \hat \phi + \hat \lambda \, \hat \phi & = & 0 & \textnormal{in} & B_R \\[3mm]
	\hat \phi & = & 0 & \textnormal{on} & \partial B_R
\end{array}
\right.
\end{equation}
being ${\rm Vol}_{\hat g}Ê(B_R) =m$.
When $\eps =0$ the metric $\hat g$ is the Euclidean metric and the solution of (\ref{formula-new}) is therefore given by $\hat \phi = \phi_1$, $\hat \lambda = \lambda_1$, the first eigenfunction and eigenvalue on the Euclidean ball $B_R$. 

\medskip

Let us doing now a formal reasoning.
In $B_R$, for some constant $\mu_\eps$ and some (small) function $w_p$ defined in $B_R$ with 0 Dirichlet boundary condition we have
 \begin{equation}\label{L}
0 = \Delta_{\hat{g}} \, (\phi_1 + w_p) + (\lambda_1 + \mu_p) \, (\phi_1 +w_p)=(\Delta_e + \lambda_1)\,  w_p +  (\Delta_{\hat{g}}-\Delta_e) \, (\phi_1 + w_p) + \mu_p \, (\phi_1 +w_p)
\end{equation}
The kernel of $\Delta_e + \lambda_1$ is spanned by $\phi_1$. Then if we write
 \[
 w_p = w_p^{\|} + w_p^{\bot}
 \]
 where $w_p^{\|} \in \ker (\Delta_e + \lambda_1)$ (say $w_p^{\|} = a_p\, \phi_1$, $a_p \in \mathbb{R}$) and $w_p^{\bot} \in (\ker (\Delta_e + \lambda_1))^{\bot}$, when we project (\ref{L}) onto $\ker (\Delta_e+\lambda_1)$ we obtain 
 \[
(1+a_p)\, \mu_p = \int_{B_R} \phi_1\, (\Delta_e - \Delta_{\hat{g}}) \, (\phi_1 + w_p) \, \textnormal{dvol}_e
\]
It is then natural to ask the function $w_p$ to be in $ (\ker (\Delta_e + \lambda_1))^{\bot}$, in order that $a_p=0$. 
\medskip

According to the previous formal reasoning, for all $w \in \mathcal C^{2, \alpha}Ê(B_1)$  orthogonal to the kernel of $\Delta_e + \lambda_1$, we define the operator
\[
N (\eps, w) : = (\Delta_e +  \lambda_1)\, w +  (\Delta_{\hat g} - \Delta_e+ \mu  )\, (\phi_1 + w)  
\]
where $\mu$ is given by 
\[
\mu  = \mu(\eps, w) = -  \int_{B_R} \phi_1 \,   \left[ (\Delta_{\hat {g}} - \Delta_e ) \, (\phi_1  + w) \right]\, \textnormal{dvol}_e\,.
\]
$N$ is $L^2(B_1)$-orthogonal to $\phi_1$ (with respect to the Euclidean metric). Our aim is to find, for all $\eps$ small enough, a function $w= w(\eps)$ smooth such that $N (\eps, w) = 0$, that is 0 when $\eps=0$. 
In fact, this condition suffices to prove the proposition, because in this case we have 
\[
\lambda_1(B_{r_p}(p)) =  \hat  \lambda = \lambda_1  + \mu
\]
that is a smooth function with respect to $\eps$ and $\mu$ vanishes when $\eps=0$.

\medskip

We have 
\[
N (0,0) =0.
\]
The mapping $N$ is a smooth map from a neighborhood of $(0,0)$ in  $[0, E) \times {\mathcal C}^{2, \alpha}_{\bot,0} (B_1)$ (for some $E>0$) into a neighborhood of $0$ in  ${\mathcal C}_\bot^{0, \alpha} (B_1)$ (here the subscript $\bot$ indicates that functions in the corresponding space are $L^2(B_1)$-orthogonal to $\phi_1$ (for the Euclidean metric) and the subscript $0$ indicates that functions vanish on $\partial B_1$). The differential of $N$ computed at  $(0,0)$, is given by  $\Delta_e + \lambda_1$
since $\hat g $ is the Euclidean metric when $\eps = 0$. Hence the partial differential of $N$ computed at $(0,0)$ is invertible from ${\mathcal C}_{\bot, 0}^{2, \alpha} (B_1)$ into ${\mathcal C}_\bot^{0, \alpha} (B_1)$ and the implicit function theorem ensures, for all $\eps$ small enough (say $\eps < \eps_0$) the existence of a unique solution $w= w(\eps) \in \mathcal C^{2, \alpha}_{\bot , 0} (B_1)$ depending smoothly on $\eps$ such that $N(\eps ,w)=0$. \hfill $\Box$

\section{Regularity}\label{sect:reg}

As annouced in the introduction, we follow the strategy intitiated in \cite{AC81Exi} to study the regularity of free boundaries, and adapt it to solutions of \eqref{eq:pbfonc}. We also adapt the contribution given by G. Weiss in \cite{W98Par} to improve the estimate on the singular set. One major difficulty is due to the volume constraint, for which we need to show that one can consider a penalized version of the problem, similarly to \cite{BL09Reg}. 

Our main goal is to be able to prove existence of blow-up limits and to study them: this will be achieved in Section \ref{ssect:blowup}, only after several preleminary steps:
\begin{itemize}
\item choose an appropriate representative of $u$ solution to \eqref{eq:pbfonc} using properties of subharmonic functions (Section \ref{ssect:apriori}),
\item prove Lipschitz regularity of this representative (Corollary \ref{cor:lip}) which requires a first penalization of the volume constraint (Section \ref{ssect:pen1}). This allows to prove existence of blow-up limits for $u$, see the beginning of the proof of Proposition \ref{prop:blowupexist},
\item prove the nondegeneracy of $u$ near the boundary (Corollary \ref{cor:nondeg}), so that blow-up limits are nontrivial. This requires a deeper analysis of the penalization of the volume constraint, in particular to show that the penalization parameter can be chosen positive for inner perturbations (Sections \ref{ssect:pen} and \ref{ssect:positivity}),
\item deduce from the previous steps that $\Om_{u}$ has finite perimeter (which allows to consider blow-ups near points of the reduced boundary, which have a normal vector in a weak sense) and density estimates (to improve the convergence of blow-up limits), see Sections \ref{ssect:perimeter} and \ref{ssect:density},
\item prove a Weiss type almost monotonicity formula (Proposition \ref{prop:monotonicity}), to show that the blow-up will be a global {\it homogeneous} minimizers of the Alt-Cafarelli functional, see \eqref{eq:pbblowup}.
\end{itemize}
After all these preliminary steps, we can analyse blow-ups and then conclude, using the classification of homogenous minimizers in dimensions less or equal to 5, and the approach by De Silva in \cite{DS11Fre} to study the regularity of flat points, see Section \ref{ssect:conclusion}.

\medskip

In this section, $M$ denotes a smooth, connected and compact Riemannian manifold. The compactness hypothesis is made only for convenience, all arguments can be localized, which explains that we drop this hypothesis in the statement of Theorem \ref{th:reg0}. The function $u$ will denote a solution to \eqref{eq:pbfonc}, as we know from Proposition \ref{prop:saturation} that $\Om^*$ solution to \eqref{eq:pbforme2} can be seen as $\Om^*=\Om_{u}=\{u\neq0\}$ where $u$ solves \eqref{eq:pbfonc}. It is also well-known that $u$ can be assumed to be nonnegative, so that $\Om_{u}=\{u>0\}$.

\medskip

Let $x_0\in M$, let $E_{1} ,\ldots, E_n$ be an orthonormal basis of the tangent space $T_{x_0} M$. We denote by $\mbox{exp}_{x_0}$ the exponential map on the manifold $M$ at the point $x_0$. The coordinates we want to use are the classical geodesic normal coordinates at $x_0$, 
\[
x : =(x^1, \ldots, x^n) \in \mathbb R^n\ ,
\]
defined by the chart
\[
X (x) : =\mbox{exp}_{x_0} \left( \Theta (x) \right) \,
\]
where
\begin{equation}\label{eq:theta}
\Theta (x) : = \sum_{i=1}^n x^i \, E_i \in T_{x_0} M \, .
\end{equation}
In order to study the regularity of our optimal set $\Omega^*$, we fix an arbitrary point $x_0 \in \partial \Omega^*$, and we just consider a neighborhood of $x_0$ in $M$ (we do not need the rest of the manifold). If we fix $r_0$ as a positive constant less than the cut locus at $x_0$ and we define the geodesic ball of radius $r_0$ and center $x_0$ as
\[
B_{r_0}^g (x_0) : =  \left\{ \mbox{Exp}_{x_0} (  \Theta (x)) \qquad : \quad  x \in \mathbb R^{n} \qquad 0 \leq |x|   < r_0 \right\} \, ,
\]
we just need to understand the behavior of $\partial \Omega^* \cap B_{r_0}^g (x_0)$. 
If $R$ is the Riemann curvature tensor on the manifold $M$, we have a Taylor expansion of the coefficients $g_{ij}(x)$ of the metric in these geodesic normal coordinates given by
\begin{equation}
g_{ij} = \delta_{ij} + \frac{1}{3} \, R_{ikj\ell} \, x^{k} \, x^{\ell} + {\mathcal O}(|x|^{3}),
\end{equation}
where $\delta_{ij}$ is the Kronecker symbol and $R_{ikj\ell} = g\big( R(E_{i}, E_{k}) \, E_{j} ,E_{\ell}\big)$ at the point $x_0$ (here the Einstein summation convention is understood). 
%
%
Hence we can choose $r_0$ small enough such that we have 
$$\frac{id}{2}\leq g\leq 2id\, , $$
in $B_{r_0}^g(x_0)$, and so the Laplace-Beltrami operator is uniformly elliptic in $B_{r_0}^g(x_0)$. Moreover, a straightforward computation shows that at the point of coodinates $x$
\[
\begin{array}{rllll}
g^{ij}(x) & = & \displaystyle \delta_{ij} - \frac{1}{3} \, R_{ikj\ell} \, x^{k} \, x^{\ell}  + {\mathcal O} (|x|^{3})\\[3mm]
\log | g| (x)& = & \displaystyle \frac{1}{3} \,R_{k\ell} \, x^{k} \, x^{\ell}  + {\mathcal O} (|x|^{3})
\end{array}
\]
where
$$
R_{k\ell} = \sum_{i=1}^{n} R_{iki\ell}\,,
$$
and then for a function $f$ with bounded fist derivatives we have
\[
\|\nabla^{g} f - \nabla^e f \|_{\infty}\leq C\, r_0\, ,
\]
in $B_{r_0}^{g}(x_0)$, where the constant $C$ depends on the bound of the first derivatives of the function $f$ in $B_{r_0}^{g}(x_0)$, and for a function $f$ with bounded first and second derivatives we have
\begin{equation}\label{eq:deltag}
\| \Delta_{g} f - \Delta_e f\|_{\infty} \leq C\, r_0\, 
\end{equation}
in $B_{r_0}^{g}(x_0)$, where the constant $C$ depends on the bound of first and second derivatives of the function $f$ in $B_{r_0}^{g}(x_0)$. 

\medskip

In some parts of the proof of the regularity result, it will be necessary to study the behavior of a function in a very small geodesic ball $B_\eps^g(x_0)$, and for that aim we will do a {\it scaling}. In the Euclidean framework, this means that given a function $w(x)$ for $x \in B_\eps$ we define the function $\tilde w(y) = w(\eps\, y)$ for $y \in B_1$, where $B_\eps$ and $B_1$ are the Euclidean balls of radius $\eps$ and 1. This means that the metric on $B_1$ (i.e. the Euclidean metric) is not the metric $g$ induced on $B_1$ by the parameterization $x=\eps y$ of $B_\eps$, but $\eps^{-2}\, g$. We do the same in the Riemannian framework. Given $\eps>0$ small enough, we define on the manifold $M$ the metric $\bar g : =  \eps^{-2} \, g$ and the parameterization given by
\[
Y ( y) : =\mbox{exp}_{x_0}^{g} \left(\eps \,  \Theta (y) \right)
\]
with $y = (y_1,...,y_n) \in B_R$, being $B_R$ the Euclidean ball of radius $R$, and $\Theta$ defined in \eqref{eq:theta}. 
If $x$ are the normal geodesic coordinates around $x_0$ of the same point as $y$, it is clear that 
\[
\bar g_{ij}(y)\, \textnormal{d}y^i \,  \textnormal{d}y^j =  \bar g = \eps^{-2} \, g = \eps^{-2} \, g_{ij}(x)\, \textnormal{d}x^i \,  \textnormal{d}x^j = \eps^{-2} \, g_{ij}(\eps \,  y)\, \eps^2 \, \textnormal{d}y^i \,  \textnormal{d}y^j = g_{ij}(\eps \,  y)\, \textnormal{d}y^i \,  \textnormal{d}y^j 
\]
Hence in the coordinates $y$, the metric $\bar g$ is given by
\[
\bar g_{ij}(y) = g_{ij}(\eps \, y)
\]
i.e.
\[
\bar g_{ij}(y) =  \delta_{ij} + \frac{1}{3} \, \eps^2\, R_{ikj\ell} \, y^{k} \, y^{\ell} + {\mathcal O}(\eps^{3})\,.
\]

\bigskip

\subsection{A priori regularity}\label{ssect:apriori}

In this first paragraph, we recall standard properties of eigenfunctions in a domain, without using the optimality of the shape itself, but only the optimality of $u$ in \eqref{eq:lambda1}. Therefore in this section, $\Om\subset M$ denotes a bounded quasi-open set. 

\begin{prop}\label{prop:reg0} Let $u$ a nonnegative solution to \eqref{eq:lambda1}. Then
\begin{itemize}
\item $\Delta_{g} u+\lambda_{1}(\Omega)u\geq 0$ in $\D'(M)$, and in particular $\Delta_{g} u$ is a signed Radon measure on $M$,
 \item $u$ is 
  bounded in $M$.
\end{itemize}
\end{prop}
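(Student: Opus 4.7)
The plan is to derive both statements from the variational formulation \eqref{eq:lambda1} and from standard subsolution theory on a compact Riemannian manifold. The first item amounts to saying that the extension of $u$ by zero to $M$ is a (sub)solution of the eigenvalue equation on the whole manifold; I would obtain it via a one-sided inner variation. Once the distributional inequality is proved, the measure statement is immediate, and the $L^\infty$ bound follows from classical Moser/De Giorgi iteration for subsolutions of a linear elliptic equation.

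For the distributional inequality, I would fix $\varphi\in C^\infty_c(M)$ with $\varphi\geq 0$ and use the competitor $v_t:=(u-t\varphi)^+$ for $t>0$ small. The first observation is that, because $u=0$ q.e.\ on $M\setminus\Omega$ and $\varphi\geq 0$, the function $v_t$ vanishes q.e.\ on $M\setminus\Omega$, hence belongs to $H^1_0(\Omega)$ and is admissible in \eqref{eq:lambda1}. Minimality of $u$ therefore yields
\[
\int_M\|\nabla^g v_t\|_g^2\,\textnormal{dvol}_g\;\geq\;\lambda_1(\Omega)\int_M v_t^2\,\textnormal{dvol}_g.
\]
Expanding this in $t$ --- using the eigenvalue identity $\int_M\|\nabla^g u\|_g^2\,\textnormal{dvol}_g=\lambda_1(\Omega)\int_M u^2\,\textnormal{dvol}_g$, the fact that $\nabla^g u=0$ a.e.\ on $\{u=0\}$, and the bound $u\leq t\|\varphi\|_\infty$ on $B_t:=\{0<u\leq t\varphi\}$ --- gives, after dividing by $2t$ and letting $t\to 0^+$, the desired inequality
\[
\int_M g(\nabla^g u,\nabla^g\varphi)\,\textnormal{dvol}_g\;\leq\;\lambda_1(\Omega)\int_M u\varphi\,\textnormal{dvol}_g,
\]
which by integration by parts is exactly $\Delta_g u+\lambda_1(\Omega)u\geq 0$ in $\mathcal{D}'(M)$. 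The main obstacle in this step is the error term $\int_{B_t}\|\nabla^g u\|_g^2\,\textnormal{dvol}_g$ appearing in the expansion: one cannot expect an $o(t)$ rate for it a priori, but crucially it has a favourable sign in the variational inequality and can simply be dropped, whereas the companion error $\int_{B_t}u^2\,\textnormal{dvol}_g\leq t^2\|\varphi\|_\infty^2\,\vol_g(B_t)$ is automatically $o(t^2)$ since $\vol_g(B_t)\to 0$.

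The measure statement then follows immediately from Riesz's representation of positive distributions: $\Delta_g u+\lambda_1(\Omega)u$ is a nonnegative Radon measure on $M$, and since $\lambda_1(\Omega)u\in L^2(M)\subset L^1(M)$ is a signed Radon measure, writing $\Delta_g u=(\Delta_g u+\lambda_1(\Omega)u)-\lambda_1(\Omega)u$ shows that $\Delta_g u$ is a signed Radon measure on $M$. Finally, for the $L^\infty$-bound, the distributional inequality $-\Delta_g u\leq\lambda_1(\Omega)u$ together with $u\in H^1(M)$, $u\geq 0$ and the compactness of $M$ places $u$ in the setting of nonnegative subsolutions of a linear elliptic equation with $L^2$ right-hand side: testing against $(u-k)^+\in H^1(M)$ for $k>0$, using the uniform ellipticity of $\Delta_g$ in local charts and the Sobolev embedding on the compact manifold $M$, the standard Moser iteration on the super-level sets $\{u>k\}$ yields $u\in L^\infty(M)$.
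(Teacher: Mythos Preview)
Your argument is correct. Both items are handled properly: the competitor $(u-t\varphi)^+$ is admissible in $H^1_0(\Omega)$, the expansion is carried out carefully (in particular you correctly identify that the problematic term $\int_{B_t}\|\nabla^g u\|_g^2$ has the good sign and may be discarded), and the measure and $L^\infty$ statements are deduced exactly as one expects.

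The paper takes a slightly different route for the first item. Rather than using the minimality of the Rayleigh quotient with the truncated competitor $(u-t\varphi)^+$, it starts from the Euler--Lagrange \emph{equality}
\[
\int_M g(\nabla^g u,\nabla^g v)\,\textnormal{dvol}_g=\lambda_1(\Omega)\int_M uv\,\textnormal{dvol}_g\qquad\text{for all }v\in H^1_0(\Omega),
\]
and tests with $v=\varphi\,p_n(u)$, where $p_n(s)=\min\{ns,1\}_+$; this is in $H^1_0(\Omega)$ because $p_n(u)$ vanishes on $\{u=0\}$. The product rule produces a nonnegative term $\int_M\|\nabla^g u\|_g^2\,\varphi\,p_n'(u)$ which is dropped, and then one lets $n\to\infty$ so that $p_n(u)\to\mathbbm{1}_{\{u>0\}}$. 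The two arguments are morally the same one-sided variation, but the paper's version uses the \emph{equation} (which needs only stationarity, not minimality) and a multiplicative cutoff, while yours uses \emph{minimality} and an additive/truncation competitor. The paper's bookkeeping is marginally lighter since no $t$-expansion or tracking of error terms on $B_t$ is required; on the other hand, your version has the pedagogical virtue of making explicit that the inequality comes directly from the variational structure of $\lambda_1$ without first isolating the Euler--Lagrange equation. For the $L^\infty$ bound the paper simply cites the subsolution estimate in Gilbarg--Trudinger, which is exactly the Moser/De~Giorgi iteration you sketch.
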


\begin{proof}
We know that
\begin{equation}\label{eq:basicEL}
\forall v\in H^1_{0}(\Om), \int_M {g}(\nabla^{g} u,\nabla^{g} v)\,  \textnormal{dvol}_{g}=\lambda_{1}(\Omega)\int_M uv \, \textnormal{dvol}_{g}\,.
\end{equation}
Let $\varphi\in C^\infty_{c}(M)$ such that $\varphi\geq 0$. We introduce $p_{n}(s)=\inf\{ns,1\}_{+}$ for $s\in\R$ and apply \eqref{eq:basicEL} to $v=\varphi p_{n}(u)$ which belongs to $H^1_{0}(\Om)$. This gives
$$\int_M {g}\left(\nabla^{g} u,\nabla^{g} \varphi\right) p_{n}(u) \,  \textnormal{dvol}_{g} +\underbrace{\int_M \|\nabla^{g} u\|_g^2\,\varphi\, p_{n}'(u) \,  \textnormal{dvol}_{g}}_{\geq 0}=\lambda_{1}(\Omega)\int_M \left(u\varphi\right) p_{n}(u)\,  \textnormal{dvol}_{g}\,.$$
We make $n$ go to $\infty$ and use that $p_{n}(u)$ converges to $\mathbbm{1}_{\{u>0\}}$; using that $u\geq 0$ in $M$, we obtain
$$\forall \varphi\in\C^\infty_{c}(M)\textrm{ such that }\varphi\geq 0, \;\;\;\int_M g(\nabla^{g}  u,\nabla^{g}  \varphi)\,  \textnormal{dvol}_{g} -\lambda_{1}(\Omega)\int_M u\varphi \,  \textnormal{dvol}_{g}\leq 0,$$
which leads to $\Delta_{g} u+\lambda_{1}(\Omega)u\geq 0$ in the sense of distribution on $M$.
The second point follows classically from the fact that
$\left[-\Delta_{g}  -\lambda_{1}(\Omega)\right]u\leq 0$ and $u\in L^2(M)$, see for example \cite[Theorem 8.15 and p. 214]{GT01Ell}.
\qed
\end{proof}

\begin{remark}\label{rk:continuity} We recall \cite[Th 1.1]{BBL17Mea} (see also \cite{BH15The}) that generalizes the mean value property in the Riemannian context:
 for every $x\in M$, there exists a family $\{D_r(x)\}_{0<r<r_0}$ of open sets, monotone with respect to the inclusion, and converging to the point $x$ when $r \to 0$ such that for any $v$ subharmonic in $M$ (meaning that $\Delta_{g} v\geq 0$), 
 we have that 
$r\in(0,r_{0})\mapsto \displaystyle{\fint_{D_{r}(x)}v \,\textnormal{dvol}_{g}}$ is increasing. Moreover, for almost every $x\in M$ we have
$$v(x)= \lim_{r\to 0}\fint_{D_{r}(x)}v \,\textnormal{dvol}_{g},$$
and this representative is defined everywhere on $M$ and is upper-semi-continuous.
\end{remark}

We add the following lemma\footnote{We didn't find a reference for this result, but it can be found in \url{https://cuhkmath.wordpress.com/2015/08/14/mean-value-theorems-for-harmonic-functions-on-riemannian-manifolds/}}:
\begin{lemma}\label{lem:ricci}
Let $v:M\to\R$, $x_{0}\in M$ and $r_{0}>0$ smaller than the injectivity radius.
\begin{itemize}
\item 
In $B_{r_{0}}^{g}(x_{0})$, if 
${\Delta_g v\le 0}$, $v\ge0$ and the Ricci curvature satisfies ${\textnormal{Ric}\ge (n-1)k}$ for some constant $k\in\R$,  then 
$r\in(0,r_{0})\mapsto \displaystyle{ \frac{1}{\vol_{g_k}(S^{g_k}_r)} \int_{S^g_r(x_0)} v\, \textnormal{dvol}_g|_{S^g_r(x_0)}}$
is decreasing,
where $S^g_r(x_0)=\partial B_{r}^g(x_{0})$ and ${S^{g_k}_r}$ is a geodesic sphere of radius ${r}$ in the space form with metric $g_k$ with constant sectional curvature $k$. 
\item  
In $B_{r_{0}}^{g}(x_{0})$, if  ${\Delta_g v\ge 0}$, $v\ge 0$ and the sectional curvature satisfies ${\mathrm{Rm}\le k}$ for some constant $k$,  then 
$r\in(0,r_{0})\mapsto \displaystyle \frac{1}{\vol_{g_k}(S^{g_k}_r)} \int_{S^g_r(x_0)} v\, \textnormal{dvol}_g|_{S^g_r(x_0)}$
is increasing.

\end{itemize}
 Here $\vol_{g_k}(S^{g_k}_r)$ represent the area of $S^{g_k}_r$ in the metric induced by $g_k$.
\end{lemma}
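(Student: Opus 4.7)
The plan is to work in geodesic polar coordinates around $x_0$, differentiate the spherical average explicitly using the Jacobian of the exponential map, reduce to a boundary integral via the divergence theorem, and then compare the mean curvature of geodesic spheres in $M$ with that of geodesic spheres in the model space form by means of the Laplacian (or Hessian) comparison theorem. Concretely, within the injectivity radius one writes $d\mathrm{vol}_g = J(r,\theta)\,dr\,d\theta_{S^{n-1}}$ with $J$ the Jacobian of $\exp_{x_0}$, and the key identity is $\partial_r J/J = \Delta_g r$, valid for $r$ smooth and positive.

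Setting $\psi(r) := \int_{S^g_r(x_0)} v\,d\sigma_g = \int_{S^{n-1}} v(r,\theta) J(r,\theta)\,d\theta$, I would differentiate and then use the divergence theorem to obtain
$$\psi'(r) = \int_{S^{n-1}} (\partial_r v)\,J\,d\theta + \int_{S^{n-1}} v\,(\partial_r J)\,d\theta = \int_{B^g_r(x_0)}\Delta_g v\,d\mathrm{vol}_g + \int_{S^g_r(x_0)} v\,\Delta_g r\,d\sigma_g.$$
Now let $\psi_k(r) := \vol_{g_k}(S^{g_k}_r) = \omega_{n-1} s_k(r)^{n-1}$, where $s_k$ solves $s_k''+k s_k=0$ with $s_k(0)=0$, $s_k'(0)=1$; then $\psi_k'(r)/\psi_k(r) = (n-1)s_k'(r)/s_k(r)$, which is precisely the radial Laplacian $\Delta_{g_k} r$ in the space form. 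Writing $\varphi(r) := \psi(r)/\psi_k(r)$, the quotient rule yields
$$\psi_k(r)^2\,\varphi'(r) = \psi_k(r)\left[ \int_{B^g_r(x_0)} \Delta_g v\,d\mathrm{vol}_g + \int_{S^g_r(x_0)} v\,\Bigl(\Delta_g r - \tfrac{\psi_k'(r)}{\psi_k(r)}\Bigr)\,d\sigma_g \right].$$

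To conclude, I would invoke the comparison theorems. In the first case, the hypothesis $\mathrm{Ric}\ge (n-1)k$ gives by the Laplacian comparison theorem that $\Delta_g r \le (n-1)s_k'(r)/s_k(r)$ on $B^g_{r_0}(x_0)$; combined with $\Delta_g v\le 0$ and $v\ge 0$, both terms in the bracket are non-positive, hence $\varphi'\le 0$. In the second case, the hypothesis $\mathrm{Rm}\le k$ yields by the Hessian comparison theorem the pointwise inequality $\mathrm{Hess}(r)\ge \mathrm{Hess}_{g_k}(r)$ on $(\partial_r)^\perp$, whose trace gives $\Delta_g r \ge (n-1) s_k'(r)/s_k(r)$; combined with $\Delta_g v\ge 0$ and $v\ge 0$, the bracket is non-negative and $\varphi'\ge 0$. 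The main subtlety to watch is the asymmetry of the hypotheses: a lower Ricci bound is sufficient for Case~1 (as only the trace of the Hessian of $r$ is needed, and this is controlled by Bishop-Gromov-type arguments), whereas in Case~2 a pointwise lower bound on the full Hessian of $r$ is required, which forces the stronger upper bound on the sectional curvature via Rauch's theorem.
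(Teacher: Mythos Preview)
Your proposal is correct and follows essentially the same route as the paper's proof. The only cosmetic difference is that the paper phrases the key comparison in terms of the mean curvature $H_{S^g_r}$ of the geodesic sphere and invokes the mean curvature comparison theorem, whereas you phrase it via $\Delta_g r$ and the Laplacian/Hessian comparison theorems; since $\Delta_g r = H_{S^g_r} = \partial_r J/J$ on each geodesic sphere, the two formulations are identical, and both arrive at the same differential inequality for $\psi(r)/\psi_k(r)$. The paper also adds a one-line remark that the non-smooth case follows by approximating $v$ by smooth sub-/superharmonic functions, which you may want to mention as well.
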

\begin{proof} 
For the first case, we assume $v$ smooth and apply Hadamard formula for derivation of integrals defined in a domain with variable boundary:
\[
\begin{array}{rl} \displaystyle 0\ge\int_{B^g_r(x_0)} \Delta_g v \, \textnormal{dvol}_g =&\displaystyle \int_{S^g_r(x_0)}g( \nabla^g v, \nu )\, \textnormal{dvol}_g|_{S^g_r(x_0)}\\ 
=&\displaystyle \frac{d}{dr}\left(\int _{S^g_r(x_0)}v \, \textnormal{dvol}_g|_{S^g_r(x_0)}\right)-  \int_{S^g_r(x_0)}v H_{S^g_r(x_0)}\, \textnormal{dvol}_g|_{S^g_r(x_0)}\end{array}
\]
 where $H_{S^g_r(x_0)}$ represents the mean curvature at points of $S^g_r(x_0)$. Let $H_{S^{g_k}_r}$ be the mean curvature of a geodesic sphere of radius $r$ in the space form of constant curvature $k$, i.e. 
\[
H_{S^{g_k}_r} = (n-1)\frac{c_k(r)}{s_k(r)}
\]
where
\[{s_k(r)= \begin{cases} r\quad &\textrm{if }k=0\\ \sin (\sqrt k r)&\textrm{if }k>0\\ \sinh (\sqrt {-k} r)&\textrm{if }k<0 \end{cases} } \qquad \textnormal{and} \qquad {c_k(r)= \begin{cases} 1\quad &\textrm{if }k=0\\ \cos (\sqrt k r)&\textrm{if }k>0\\ \cosh (\sqrt {-k} r)&\textrm{if }k<0 \end{cases} }\, .
\] 
If ${\textnormal{Ric}\ge (n-1)k}$, by the mean curvature comparison theorem (see \cite{Zhu}) we have $H \leq H_{S^{g_k}_r}$, and then
\[
\begin{array}{rl} \displaystyle 0\ge&\displaystyle \frac{d}{dr}\left(\int _{S^g_r(x_0)}v \, \textnormal{dvol}_g|_{S^g_r(x_0)}\right)- \int_{S^g_r(x_0)}v H_{S^{g_k}_r} \, \textnormal{dvol}_g|_{S^g_r(x_0)}\\ =&\displaystyle \frac{d}{dr}\left(\int _{S^g_r(x_0)}v \, \textnormal{dvol}_g|_{S^g_r(x_0)}\right)- (n-1)\frac{c_k(r)}{s_k(r)}\int_{S^g_r(x_0)}v\, \textnormal{dvol}_g|_{S^g_r(x_0)}. \end{array}
\]
If $w_n$ is the volume of the unit $n$-dimensional ball, we obtain
\[
0\ge w_n s_k(r)^{n-1} \frac{d}{dr} \left(\int_{S^g_r(x_0)} v\, \textnormal{dvol}_g|_{S^g_r(x_0)}\right) -(n-1)w_{n}s_k(r)^{n-2}c_k(r) \int_{S_r^g(x_{0})} v\, \textnormal{dvol}_g|_{S^g_r(x_0)}\, ,
\]
which implies
\[
0\ge \frac{d}{dr} \left(\frac{\displaystyle \int_{S^g_r(x_0)}v \, \textnormal{dvol}_g|_{S^g_r(x_0)}}{w_n s_k(r)^{n-1}}\right) =\frac{d}{dr}\left(\frac{1}{\vol_{g_k}(S^{g_k}_r)}\int _{S^g_r(x_0)} v\, \textnormal{dvol}_g|_{S^g_r(x_0)}\right)
\]
and leads to the result. The case of non-smooth function $v$ is obtained by approximation, because smooth subharmonic functions are dense in the space of subharmonic functions (see for example \cite{BonLan}).

The proof for the subharmonic case is similar except that we require the sectional curvature to be less or equal to $k$ in order to use the Hessian comparison theorem.\qed

\end{proof}


\begin{corollary}\label{cor:continuity}
Let $u$ be a solution of \eqref{eq:lambda1}. Then $u$ can be pointwise defined by an upper-semi-continuous representative (still denoted $u$) with the following formula (see Remark \ref{rk:continuity} for the definition of $D_{r}(x)$):
\begin{equation}\label{eq:pointwise}u(x)= \lim_{r\to 0}\fint_{D_{r}(x)}u \,\textnormal{dvol}_{g}.
\end{equation}
Moreover, there exist $c,c'>0$ and $r_{0}>0$ such that 
for any $r<r_{0}$, one has
\begin{equation}\label{eq:meanvalue}
u(x)\leq c\left(\fint_{B_{r}^{g}(x)}u\, \textnormal{dvol}_{g}+r^2\right)\leq c'\left(\fint_{S_{r}^{g}(x)}u\, \textnormal{dvol}_{g}|_{S^{g}_r(x_0)}+r^2\right).
\end{equation}
\end{corollary}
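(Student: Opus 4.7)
The strategy is to write $u$ locally as a subharmonic function minus a smooth correction, so that the mean value properties of Remark \ref{rk:continuity} and Lemma \ref{lem:ricci} can be applied directly. Proposition \ref{prop:reg0} yields only $\Delta_g u \geq -C$ with $C := \lambda_1(\Om)\|u\|_{L^\infty(M)}$, which is not enough to invoke these results on $u$ itself. Fix $x_0 \in M$ and set $\psi(y) := A\, d_g(x_0,y)^2$ on a small geodesic ball $B_{r_0}^g(x_0)$; from the expansion of $g_{ij}$ in normal coordinates at $x_0$ recalled in Section 4, $\Delta_g(d_g(x_0,\cdot)^2) = 2n + \mathcal{O}(r)$, so for $r_0$ small and $A$ large enough (depending on $C$ and on uniform curvature bounds on $M$) we have $\Delta_g \psi \geq C$ on $B_{r_0}^g(x_0)$. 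Then $v := u + \psi$ is nonnegative, subharmonic on $B_{r_0}^g(x_0)$, with $\psi(x_0) = 0$ and $0 \leq \psi \leq A r^2$ on $B_r^g(x_0)$ for every $r < r_0$.

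For the first assertion, I would apply Remark \ref{rk:continuity} to $v$ to obtain an upper-semi-continuous representative defined everywhere by $v(x_0) = \lim_{r \to 0^+} \fint_{D_r(x_0)} v\, \textnormal{dvol}_g$. Since $\psi$ is continuous with $\psi(x_0)=0$ and $D_r(x_0) \to \{x_0\}$, the averages $\fint_{D_r(x_0)} \psi$ tend to $0$, hence $\fint_{D_r(x_0)} u = \fint_{D_r(x_0)} v - \fint_{D_r(x_0)} \psi$ converges to $v(x_0)$. Defining $u(x_0)$ as this limit (which is independent of the auxiliary choice of $\psi$) provides the desired representative, upper-semi-continuous as a difference of an usc and a continuous function, and satisfying \eqref{eq:pointwise}.

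For \eqref{eq:meanvalue}, I would apply the subharmonic case of Lemma \ref{lem:ricci} to $v$, with $k$ an upper bound of the sectional curvature of $M$ (finite by compactness): the quantity $\Phi(\rho) := \vol_{g_k}(S_\rho^{g_k})^{-1} \int_{S_\rho^g(x_0)} v\, \textnormal{dvol}_g|_{S_\rho^g(x_0)}$ is nondecreasing on $(0,r_0)$. Using the standard fact $\Phi(\rho) \to v(x_0)$ as $\rho \to 0$ for the usc representative of a nonnegative subharmonic function, one deduces $v(x_0) \leq \Phi(r)$. Multiplying by $\vol_{g_k}(S_\rho^{g_k})$, integrating in $\rho \in (0,r)$, and using the co-area formula further gives $v(x_0)\, \vol_{g_k}(B_r^{g_k}) \leq \int_{B_r^g(x_0)} v\, \textnormal{dvol}_g$. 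Substituting $v = u + \psi$, absorbing the bounded ratios $\vol_g(B_r^g(x_0))/\vol_{g_k}(B_r^{g_k})$ and $\vol_g(S_r^g(x_0))/\vol_{g_k}(S_r^{g_k})$ into constants, and using $0 \leq \psi \leq A r^2$ to pass back from $v$ to $u$ yields both inequalities in \eqref{eq:meanvalue}. The main technical point is to make all constants and the radius $r_0$ uniform in $x_0 \in M$; this is granted by the compactness of $M$, which provides uniform two-sided curvature bounds, a positive lower bound on the injectivity radius, and uniform comparison between $g$-balls/spheres and $g_k$-balls/spheres for small $r$. Beyond this uniformization, the argument is essentially bookkeeping on the subharmonic comparison $v \leftrightarrow u$.
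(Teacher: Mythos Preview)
Your proposal is correct and follows essentially the same approach as the paper: both add a quadratic correction (the paper uses $w(x)=|x|^2$ in normal coordinates, which coincides with your $d_g(x_0,\cdot)^2$) to render $u$ locally subharmonic, then invoke Remark~\ref{rk:continuity} for the pointwise representative and Lemma~\ref{lem:ricci} for the mean-value estimates. The only cosmetic difference is the order in which the two inequalities in \eqref{eq:meanvalue} are obtained: the paper first gets the ball estimate via the containment $D_r(x_0)\subset B^g_{c_1 r}(x_0)$ from \cite{BH15The} and then the sphere estimate from Lemma~\ref{lem:ricci}, whereas you derive the sphere estimate first and integrate to recover the ball estimate.
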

\begin{proof}
From Proposition \ref{prop:reg0}, we know that $\Delta_{g} u\geq -\lambda\|u\|_{\infty}$. Let fix $x_{0}\in M$ and $r_{0}$ such that $B^{g}_{r_{0}}(x_{0})$ can be represented by a unique chart, and define $w(x)=|x|^2$ in local geodesic coordinates. Then in $B^{g}_{r}(x_{0})$ for $r\leq r_{0}$, one has using \eqref{eq:deltag},
$$\Delta_{g}w\geq \Delta w-C r$$
 for some constant $C$ independant on $r$.
Therefore $\widetilde{u}:=(u+\lambda\|u\|_{\infty}w)$ is sub-harmonic in $B^{g}_{r_{1}}(x_{0})$ if $r_{1}$ is small enough. 
Applying Remark \ref{rk:continuity}, we obtain first
that 
\[
\lim_{r\to0}\fint_{D_{r}(x_{0})}(u+\lambda\|u\|_{\infty}w)\, \textnormal{dvol}_{g}=\lim_{r\to0}\fint_{D_{r}(x_{0})}u\, \textnormal{dvol}_{g}
\] 
exists.
As $g$ is uniformly elliptic inside $B^{g}_{r_{1}}(x_{0})$, from \cite[Theorem 6.3]{BH15The}, one has 
\[
B^{g}_{c_{0}r}(x_{0})\subset D_{r}(x_{0})\subset B^{g}_{c_{1}r}(x_{0})
\] 
for some constants $0<c_{0}<c_{1}$, and $r<\frac{r_{1}}{c_{1}}$.
From Lebesgue differentiation Theorem, \eqref{eq:pointwise} is valid almost everywhere, and this representative is upper semi-continuous, see also \cite{BH15The}. 
Using the representative of $u$ given in \eqref{eq:pointwise} and the monotonicity of 
 \[
 r\in(0,r_{1})\mapsto\int_{D_{r}(x_{0})}\widetilde{u}\,\textnormal{dvol}_{g},
 \] 
 we obtain 
 $$u(x_{0})\leq \fint_{D_{r/c_{1}}(x_{0})}\widetilde{u}\, \textnormal{dvol}_{g}
 \leq \frac{1}{\vol_g(B^{g}_{c_{0}r/c_{1}}(x_{0}))}\int_{B^{g}_{r}(x_{0})}\widetilde{u} \, \textnormal{dvol}_{g} 
 = \frac{\vol_g(B^{g}_{r}(x_{0}))}{\vol_g(B^{g}_{c_{0}r/c_{1}}(x_{0}))}\fint_{B^{g}_{r}(x_{0})}\widetilde{u} \, \textnormal{dvol}_{g}
 $$
$$ \leq 2\left(\frac{c_{1}}{c_{0}}\right)^n\fint_{B^{g}_{r}(x_{0})}\widetilde{u}\, \textnormal{dvol}_{g}\leq c\left(\fint_{B^{g}_{r}(x_{0})}u\, \textnormal{dvol}_{g}+r^2\right),$$
where $c=2\max\{1,\lambda\|u\|_{\infty}\}\displaystyle{\left(\frac{c_{1}}{c_{0}}\right)^n}$.
For the last property, we show there exists $c_{2}$ such that
$$\fint_{B^{g}_{r}(x_{0})}\widetilde{u}\, \textnormal{dvol}_{g} \leq c_{2}\fint_{S^{g}_{r}(x_{0})}\widetilde{u}\, \textnormal{dvol}_{g}|_{S^{g}_r(x_0)}.$$
First, we choose $k$ positive such that ${\mathrm{Rm}\le k}$ in the ball $B^{g}_{r}(x_{0})$. Therefore, from Lemma \ref{lem:ricci}, we know that 
\[
r\in(0,r_{1})\mapsto \displaystyle \frac{1}{\vol_{g_k}(S^{g_k}_r)} \int_{S^g_r(x_0)} \widetilde{u}\, \textnormal{dvol}_g|_{S^g_r(x_0)}
\] 
is increasing. Moreover
\begin{eqnarray*}\fint_{B^{g}_{r}(x_{0})}\widetilde{u}\, \textnormal{dvol}_{g}&=&\frac{1}{\vol_g(B^{g}_{r}(x_{0}))}\int_{0}^r\left(\int_{S^{g}_{s}(x_{0})}\widetilde{u}\, \textnormal{dvol}_{g}|_{S^{g}_s(x_0)}\right)ds\\
&=&\frac{1}{\vol_g(B^{g}_{r}(x_{0}))}\int_{0}^r\left(\frac{1}{\vol_{g_k}(S^{g_{k}}_{s}(x_{0}))}\int_{S^{g}_{s}(x_{0})}\widetilde{u}\, \textnormal{dvol}_{g}|_{S^{g}_s(x_0)}\right)\vol_{g_k}(S^{g_{k}}_{s}(x_{0}))ds\\
&\leq& \frac{\vol_{g_k}(B^{g_{k}}_{r}(x_{0}))}{\vol_g(B^{g}_{r}(x_{0}))}\left(\frac{1}{\vol_{g_k}(S^{g_{k}}_{r}(x_{0}))}\int_{S^{g}_{r}(x_{0})}\widetilde{u}\, \textnormal{dvol}_{g}|_{S^{g}_r(x_0)}\right)
\end{eqnarray*}
and hence the result, as $\displaystyle{\frac{\vol_{g_k}(B^{g_{k}}_{r}(x_{0}))}{\vol_g(B^{g}_{r}(x_{0}))}}$ is bounded from above for $r\leq r_{1}$.
\qed
\end{proof}

\subsection{First penalization}\label{ssect:pen1}

In the first steps of the analysis in \cite{AC81Exi}, which will lead to the optimal regularity for $u$ (see Section \ref{ssect:lip}), the authors use a test function whose support is larger than $\Om_{u}=\{u\neq 0\}$. Therefore, this test function is a priori not allowed because of the volume constraint. We therefore prove that \eqref{eq:pbfonc} is equivalent to a penalized version. Note that one could skip this step as in Section \ref{ssect:pen} we will prove a more precise penalization result (see \cite{RTV18Exi} where this strategy is applied), but we decided to keep the result in this section as they are more straightforward and global. We use in this section that $M$ is compact. Nevetheless this hypothesis is not required for Theorem \ref{th:reg0} since this section can be skipped.


\begin{prop}[Global Penalization from above]\label{prop:pen1}
If $u$ is a solution of \eqref{eq:pbfonc}, 
then there exists $\mu^*\in\R_{+}$ such that for all  $v\in H^1(M)$
\begin{equation}\label{eq:pbpen1}
\int_M \|\nabla^{g}  u\|_g^2 \,  \textnormal{dvol}_{g}\leq \int_M \|\nabla^{g}  v\|_g^2\,  \textnormal{dvol}_{g}+\lambda(m)\left[1-\int_M v^2\,  \textnormal{dvol}_{g}\right]^++\mu^* \Big[\vol_g(\Om_{v})- m\Big]^+.
\end{equation}
\end{prop}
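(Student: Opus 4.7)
I would reformulate the statement as saying that, for some $\mu^*\geq 0$, $u$ is a global minimizer of the penalized functional
\[
F_\mu(v):=\int_M\|\nabla^g v\|_g^2\,\textnormal{dvol}_g+\lambda(m)\left[1-\int_M v^2\,\textnormal{dvol}_g\right]^++\mu\big[\vol_g(\Om_v)-m\big]^+.
\]
Indeed, by Proposition \ref{prop:saturation} we have $\int_M u^2\,\textnormal{dvol}_g=1$ and $\vol_g(\Om_u)=m$, so $F_\mu(u)=\int_M\|\nabla^g u\|_g^2\,\textnormal{dvol}_g=\lambda(m)$ for every $\mu\geq 0$, and the inequality in the proposition is exactly $F_{\mu^*}(v)\geq F_{\mu^*}(u)$ for all $v\in H^1(M)$. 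The proof proceeds by contradiction: assume that $\inf_{H^1(M)} F_\mu<\lambda(m)$ for every $\mu>0$.

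\textbf{Existence of penalized minimizers and first reduction.} For each $\mu>0$, $F_\mu$ admits a minimizer $v_\mu\in H^1(M)$ by the direct method. The crucial observation is that the rescaling $w\mapsto w/\|w\|_{L^2}$ whenever $\|w\|_{L^2}>1$ does not increase $F_\mu$ (same support, unchanged volume penalty, vanishing $L^2$ term, smaller gradient term), so one may restrict the minimization to $\|w\|_{L^2}\leq 1$. A minimizing sequence is then uniformly bounded in $H^1(M)$; combining Rellich-Kondrachov (available since $M$ is compact) with the lower semi-continuity of $v\mapsto \vol_g(\Om_v)$ on quasi-open sets yields $v_\mu$. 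Under the contradiction hypothesis $F_\mu(v_\mu)<\lambda(m)$, I would first show $\vol_g(\Om_{v_\mu})>m$: otherwise the normalized function $v_\mu/\|v_\mu\|_{L^2}$ would be admissible in \eqref{eq:pbfonc0} and a short computation (using the form of $F_\mu$) shows its Rayleigh quotient is strictly less than $\lambda(m)$, contradicting the definition of $\lambda(m)$.

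\textbf{Passing to the limit $\mu_k\to\infty$.} From $F_{\mu_k}(v_{\mu_k})<\lambda(m)$ I extract $\mu_k[\vol_g(\Om_{v_{\mu_k}})-m]\leq\lambda(m)$, hence $\vol_g(\Om_{v_{\mu_k}})\to m^+$. Up to a subsequence, $v_{\mu_k}\rightharpoonup v^*$ weakly in $H^1(M)$ and strongly in $L^2(M)$. Lower semi-continuity of $\int\|\nabla^g\cdot\|_g^2$ and of the volume of quasi-open sets allows one to identify $v^*$ (after normalization if needed) with a minimizer of \eqref{eq:pbfonc0}, so in particular $\|v^*\|_{L^2}=1$ and $\vol_g(\Om_{v^*})=m$; furthermore, since $\int\|\nabla^g v_{\mu_k}\|_g^2\to\lambda(m)=\int\|\nabla^g v^*\|_g^2$, the convergence upgrades to strong $H^1$-convergence.

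\textbf{Deriving the contradiction — main obstacle.} To derive the contradiction I introduce the truncation $w_k:=(v_{\mu_k}-t_k)^+$, with $t_k\geq 0$ chosen minimal so that $\vol_g(\{v_{\mu_k}>t_k\})\leq m$. Then $w_k$ is admissible in \eqref{eq:pbfonc0}, which gives $\int\|\nabla^g w_k\|_g^2\geq\lambda(m)\int w_k^2$ and thus $F_{\mu_k}(w_k)\geq\lambda(m)>F_{\mu_k}(v_{\mu_k})$. An elementary computation produces
\[
F_{\mu_k}(v_{\mu_k})-F_{\mu_k}(w_k)\geq \mu_k\big[\vol_g(\Om_{v_{\mu_k}})-m\big]-\lambda(m)\Big(\|v_{\mu_k}\|_{L^2}^2-\|w_k\|_{L^2}^2\Big),
\]
together with the estimate $\|v_{\mu_k}\|_{L^2}^2-\|w_k\|_{L^2}^2\leq 2t_k\|v_{\mu_k}\|_{L^1}+t_k^2\,\vol_g(\Om_{v_{\mu_k}})=O(t_k)$. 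The core difficulty — and the main technical obstacle — is the quantitative control of $t_k$: one must show that $t_k\to 0$ fast enough that the error $O(t_k)$ is dominated by the positive term $\mu_k[\vol_g(\Om_{v_{\mu_k}})-m]$, forcing $F_{\mu_k}(v_{\mu_k})-F_{\mu_k}(w_k)\geq 0$ for large $k$ and contradicting the contradiction hypothesis. This control relies on the strong $H^1$-convergence $v_{\mu_k}\to v^*$ together with the fact that $v^*$ is the first (hence strictly positive) eigenfunction on its support, so that the level sets $\{v^*>t\}$ approach $\Om^*$ as $t\to 0^+$; convergence in measure of $\{v_{\mu_k}>t\}\to \{v^*>t\}$ then forces $t_k\to 0$, and a finer comparison between $v_{\mu_k}$ and $v^*$ in a neighborhood of $\partial\Om^*$ yields the quantitative rate needed to close the argument.
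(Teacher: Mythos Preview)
Your setup through the existence of minimizers $v_\mu$, the reduction to $\|v\|_{L^2}\le 1$, and the observation that $\vol_g(\Om_{v_\mu})>m$ under the contradiction hypothesis, coincides with the paper. The divergence---and the genuine gap---is in how you force the contradiction.

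You truncate at a level $t_k$ chosen so that $\vol_g(\{v_{\mu_k}>t_k\})\le m$, and then you need the quantitative bound $\mu_k\big[\vol_g(\Om_{v_{\mu_k}})-m\big]\ge C t_k$. But the excess volume is exactly $\vol_g(\{0<v_{\mu_k}\le t_k\})$, so this reduces to a lower bound $\mu_k\,\vol_g(\{0<v_{\mu_k}\le t_k\})\ge C t_k$, which is a statement about how thin the sublevel sets of $v_{\mu_k}$ are near the free boundary. The sketch you offer---using that $v^*$ is strictly positive on $\Om^*$ and comparing level sets near $\partial\Om^*$---would need either nondegeneracy of $v^*$ or some regularity of $\partial\Om^*$; both are downstream consequences of the very penalization result you are proving. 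Strong $H^1$-convergence alone does not control such pointwise/level-set quantities with a rate.

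The paper avoids this circularity by a different and much more direct mechanism. Still assuming $\vol_g(\Om_{u_\mu})>m$, one tests the \emph{minimality of $u_\mu$} (not the admissibility of a truncated function) against $(u_\mu-t)^+$ for \emph{all} small $t>0$, with no attempt to reach volume $\le m$. This yields
\[
\int_{\{0<u_\mu<t\}}\|\nabla^g u_\mu\|_g^2\,\textnormal{dvol}_g+\mu\,\vol_g(\{0<u_\mu<t\})\le 2\lambda(m)\,t\,\vol_g(\Om_{u_\mu})^{1/2}.
\]
Rewriting the left side via the co-area formula, dividing by $t$ and letting $t\to 0$ gives $\sqrt{2\mu}\,P(\Om_{u_\mu})\le 2\lambda(m)\,\vol_g(\Om_{u_\mu})^{1/2}$. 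Since the isoperimetric profile $I_M$ of a compact connected manifold is strictly positive on $(0,\vol_g(M))$ and $\vol_g(\Om_{u_\mu})$ stays in a fixed compact subinterval (from the bound $\mu\,[\vol_g(\Om_{u_\mu})-m]\le\lambda(m)$), one obtains an explicit upper bound on $\mu$, contradicting $\mu\to\infty$. The essential ingredients you are missing are thus: (i) using optimality of $u_\mu$ rather than admissibility of the truncation, (ii) the co-area formula to convert the sublevel estimate into a perimeter bound, and (iii) the isoperimetric profile to close the argument.
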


We will argue as in \cite[Theorem 2.4]{BHP05Lip} where they deal with the Euclidian case. Only the last part of the proof requires some new developments in the Riemannian case, though we will recall the whole proof. 
 
 \begin{remark}\label{rk:pen}
 This implies in particular that if $\Om^*$ solves \eqref{eq:pbforme2} and $M$ is compact, then for $\mu$ large enough, 
 $$\forall \Om\subset M, \;\;\;\;\lambda_{1}(\Om^*)\leq \lambda_{1}(\Om)+ \mu\left[\vol_g(\Om)-m\right]^+$$
 \end{remark}

 \begin{proof}
Let us 
introduce $u_{\mu}$ a solution of 
 $$\min\left\{G_{\mu}(v):=\int_M \|\nabla^{g}  v\|_g^2 \,  \textnormal{dvol}_{g}+\lambda_{ m}\left[1-\int_M  v^2\,  \textnormal{dvol}_{g}\right]^++\mu \Big[\vol_g(\Om_{v})- m\Big]^+, v\in H^1(M)\right\}.$$
The existence of such $u_{\mu}$ follows from standard compactness arguments: observing indeed that $G_{\mu}(|w|)= G_{\mu}(w)$,
 and that $G_{\mu}(w/\|w\|_{L^2})\leq G_{\mu}(w)$ if $\|w\|_{L^2}\geq 1$, 
one can consider a minimizing sequence in the set 
\[
\{w\in H^1(M)\;/\; w\geq 0, \|w\|_{L^2}\leq 1\}\,.
\] 
Such minimizing sequence has a gradient uniformly bounded in $L^2$ and is therefore bounded in $H^1(M)$; similarly than in the proof of Theorem \ref{th:existence0}, this provides a solution $u_{\mu}$, which is moreover non-negative and such that $\|u_{\mu}\|_{L^2}\leq 1$.
Then, it only remains to show that for $\mu$ large enough, we necessarily have $\vol_g(\Om_{u_{\mu}})\leq m$. Indeed, in that case we have, using optimality of $u_{\mu}$, $G_{\mu}(u_{\mu})\leq G_{\mu}(u)$ and one hand, and on the other hand
 using \eqref{eq:pbfonc}: 
 \begin{equation*}
 \int_M \|\nabla^{g}  u\|_g^2\,  \textnormal{dvol}_{g}\leq
 \int_M \|\nabla^{g}  u_{\mu}\|_g^2\,  \textnormal{dvol}_{g}+\lambda(m)\left[1-\int_M u_{\mu}^2\,  \textnormal{dvol}_{g}\right]
  \leq G_{\mu}(u_{\mu})
 \end{equation*}
 so $u$ is also a minimizer for \eqref{eq:pbpen1}.
 
 \medskip
 
 We therefore assume that $\vol_g(\Om_{u_{\mu}})>m$; then we can write, $G_{\mu}(u_{\mu})\leq G_{\mu}((u_{\mu}-t)^{+})$ for $t>0$ small enough and get, using in particular that $\|(u-t)^+\|_{L^2}\leq \|u\|_{L^2}\leq 1$:
\begin{eqnarray*} \int_{\{0<u_{\mu}<t\}} \|\nabla^{g}  u_{\mu}\|_g^2\,  \textnormal{dvol}_{g}+ \mu\, \vol_g(\{0<u_{\mu}<t\})&\leq&  
  \lambda_{ m}\left[\int_M u_{\mu}^2\,  \textnormal{dvol}_{g}-\int_{\{u_{\mu}\geq t\}}(u_{\mu}-t)^2\,  \textnormal{dvol}_{g}\right]\\&\leq& \lambda_{ m}\left[\int_{\{0<u_{\mu}<t\}} u_{\mu}^2\,  \textnormal{dvol}_{g}+\int_{\{u_{\mu}\geq t\}}2tu_{\mu}\,  \textnormal{dvol}_{g}\right] 
 \end{eqnarray*}
and using the Cauchy-Schwarz inequality and the fact that $\|u_{\mu}\|_{L^2}\leq 1$, we get:
$$ \int_{\{0<u_{\mu}<t\}}\left( \|\nabla^{g}  u_{\mu}\|_g^2-\lambda_{ m}u_{\mu}^2\right)\,  \textnormal{dvol}_{g}+\mu\, \vol_g(\{0<u_{\mu}<t\}) \leq 2\lambda_{ m}t \, \vol_g(\Om_{u_{\mu}})^{1/2}.$$ 
With the co-area formula, we obtain
 $$\int_0^t\int_{\{u_{\mu}=s\}}\underbrace{\left[ \|\nabla^{g}  u_{\mu}\|_g+\frac{\mu-\lambda_{ m}u_{\mu}^2}{\|\nabla^{g}  u_{\mu}\|_g}\right]}_{\geq \sqrt{2\mu}\;\textrm{for }s \textrm{ such that }\lambda_{ m}s^2\leq \mu/2}\,  \textnormal{dvol}_{g|_{\{u_{\mu}=s\}}} dt\leq 2\lambda_{ m}t \, \vol_g(\Om_{u_{\mu}})^{1/2},\;\;\;\textrm{ for }t\textrm{ small enough}$$
 which gives, dividing by $t$ and letting $t\to 0$:
 $$\sqrt{2\mu} \, P(\Om_{u_{\mu}})\leq 2\lambda_{ m} \vol_g(\Om_{u_{\mu}})^{1/2}, \;\;\;\;\textrm{ and then }\;\;\;\;\;\sqrt{2\mu}\leq \frac{2\lambda(m)\vol_g(M)^{1/2}}{I_{M}(\vol_g(\Om_{u_{\mu}}))},$$
where $I_{M}:[0,\vol_g(M)]\to\R_{+}$ is the isoperimetric profile of $M$, which is positive on $(0,\vol_g(M))$ as $M$ is compact and connected.
From the estimate
$$G_{\mu}(u_{\mu})\leq G_{\mu}(u)=\int_M \|\nabla^{g}  u\|_g^2 \,  \textnormal{dvol}_{g}\,, \;\;$$
we know that $\mu \big[\vol(\Om_{u_{\mu}})-m\big]$ is bounded uniformly in $\mu$, so there exists $\mu_{0}$ such that forall $\mu\geq \mu_{0}$, $\vol_g(\Om_{u_{\mu}})\in \left[m,\frac{\vol_g(M)+m}{2}\right]$.
As a conclusion, if $\mu> \displaystyle{\max\left\{\mu_{0}, \frac{2\lambda(m)\vol_g(M)}{\inf\left\{I_{M}(s), s\in\left[m,\frac{\vol_g(M)+m}{2}\right]\right\}}\right\}}$, then $\vol_g(\Om_{\mu})\leq m$, which concludes the proof.
 \qed
 \end{proof}

 \subsection{Lipschitz continuity of the first eigenfunction}\label{ssect:lip}

A first step in the regularity theory is to study the regularity of the state function, seen as a function defined on $M$. The regularity is obvious inside or outside $\Om_{u}$ (so far though, we do not know yet that the interior of $\Om_{u}$ is not empty), but the regularity across the free boundary is non trivial, especially as we don't know anything about the regularity of this free boundary. It is clear that one cannot expect more than Lipschitz continuity: even if we already knew that $\Om_{u}$ is smooth, then the eigenfunction vanishes outside $\Om_{u}$ and has  a linear growth from the boundary, inside $\Om_{u}$. The purpose of this section is to prove the Lipschitz continuity of $u$ despite the lack of knowledge about $\partial\Om_{u}$; this is often referred to as the {\it optimal regularity of the state function}. This will have some consequences about weak regularity of the free boundary, and will be useful to prove existence of blow-ups. 
 We will use here the penalization result of the previous section. But we could also use the refined penalization result of Section \ref{ssect:pen} (for which the compactness of the manifold is not required), see also \cite{RTV18Exi}.



As in \cite{AC81Exi}, we express the Lipschitz regularity as an uniform bound of the mean value of $u$ on spheres crossing $\partial\Om_{u}$, so the main tool in this section is the following lemma:
\begin{lemma}[Upper bound to the growth of $u$ near the boundary]\label{lem:lip}
Let $u$ be a solution of \eqref{eq:pbfonc}. There exist $C>0$ and $r_{0}>0$ such that,
for all geodesic ball $B_r^{g}(x_0)\subset M$ with $r\leq r_{0}$,
\begin{equation}
\;\frac{1}{r}\fint_{\partial B_r^{g}(x_0)}u \,  \textnormal{dvol}_{g|_{\partial B_r^{g}(x_0)}} \geq C
 \;\;\;\Longrightarrow\;\;\;  u>0 \textrm{ on } B_r^{g}(x_0).
\end{equation}
\end{lemma}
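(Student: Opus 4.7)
The plan is to argue by contraposition, adapting the Alt--Caffarelli strategy (Lemma~3.2 of~\cite{AC81Exi}) to the Riemannian penalized setting: suppose there exists $y_0 \in B_r^g(x_0)$ with $u(y_0) = 0$, and derive the bound $\fint_{\partial B_r^g(x_0)} u \leq C r$. I work in geodesic normal coordinates on $B_{r_0}^g(x_0)$, using \eqref{eq:deltag} to control the Riemannian corrections. The key competitor is the ``harmonic-type replacement'' $v \in H^1(M)$ defined as the solution of $-\Delta_g v = \lambda(m)\, u$ in $B_r^g(x_0)$ with $v = u$ on $\partial B_r^g(x_0)$, extended by $u$ outside. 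Proposition~\ref{prop:reg0} gives $\Delta_g u + \lambda(m) u \geq 0$, so $v - u$ is a nonnegative $g$-superharmonic function with zero boundary values, hence $v \geq u \geq 0$ in $B_r^g(x_0)$.

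This makes $v$ admissible in \eqref{eq:pbpen1}: the bound $\int_M v^2 \geq \int_M u^2 = 1$ eliminates the $L^2$-penalization, while $\{v>0\}\subset\{u>0\}\cup B_r^g(x_0)$ yields $\vol_g(\{v>0\}) - m \leq \vol_g(B_r^g(x_0)\cap\{u=0\})$. Plugging $v$ into Proposition~\ref{prop:pen1}, expanding via integration by parts (using the equation for $v$ and $u = v$ on $\partial B_r^g(x_0)$), and absorbing the lower-order cross-term $2\lambda(m)\int u(v-u)$ through Cauchy--Schwarz together with the Poincar\'e inequality in $B_r^g(x_0)$, one obtains the crucial energy estimate
\begin{equation*}
\int_{B_r^g(x_0)} \|\nabla^g(v-u)\|_g^2\, \textnormal{dvol}_g \;\leq\; C\, \vol_g\!\bigl(B_r^g(x_0)\cap\{u=0\}\bigr) + C\, r^{n+2} \;\leq\; C\, r^n.
\end{equation*}
In parallel, comparison with the $g$-harmonic extension $h$ of $u|_{\partial B_r^g(x_0)}$ yields $v \geq h$ (since $-\Delta_g(v-h) = \lambda(m) u \geq 0$ with zero boundary values), and the Poisson--Harnack estimate applied to $h$ on the nearly Euclidean ball produces
\begin{equation*}
v(y_0) \;\geq\; h(y_0) \;\geq\; c_n \fint_{\partial B_r^g(x_0)} u \, \textnormal{dvol}_{g|_{\partial B_r^g(x_0)}} \;-\; C\, r^2,
\end{equation*}
at least when $y_0 \in B_{r/2}^g(x_0)$; the remaining case $y_0 \in B_r^g(x_0) \setminus B_{r/2}^g(x_0)$ is handled by iterating the argument on a smaller concentric ball that still contains a zero of $u$.

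To conclude, since $u(y_0) = 0$ the quantity $(v-u)(y_0) = v(y_0)$ satisfies a pointwise lower bound proportional to $\fint_{\partial B_r^g(x_0)} u$, while the $H^1$-energy of the nonnegative $g$-superharmonic function $v-u$ is controlled by $C r^n$. Converting the pointwise value at $y_0$ into an $L^2$-mass on a suitable subball (via the interior De Giorgi--Nash--Moser continuity for $v-u$, applied with constants that are uniform on $B_{r_0}^g(x_0)$ by uniform ellipticity of $\Delta_g$) and then comparing with the energy estimate via Poincar\'e, one obtains $\fint_{\partial B_r^g(x_0)} u \leq C r$, which is the desired conclusion. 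The main obstacle I expect is precisely this last conversion: in the Euclidean Alt--Caffarelli argument it is essentially immediate from the explicit Poisson kernel for $\Delta_e$, whereas in the Riemannian setting the mean-value and Poisson--Harnack estimates hold only up to $O(r^2)$-corrections---these must be absorbed by choosing $r_0$ small enough that the perturbative terms are of strictly lower order than the principal bound. A secondary, more technical difficulty is verifying that the cross-term absorption and the Riemannian Poisson estimate preserve constants that depend only on $n$, $\|u\|_\infty$, $\lambda(m)$, $\mu^*$, and the local geometry of $g$ on $B_{r_0}^g(x_0)$.
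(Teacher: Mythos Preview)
Your framework matches the paper: the same competitor $v$ solving $-\Delta_g v=\lambda(m)u$ in $B_r^g(x_0)$, the same use of Proposition~\ref{prop:pen1} to get an upper bound on $\int_{B_r^g(x_0)}\|\nabla^g(v-u)\|_g^2$, and the same Harnack/Poisson-type lower bound on $v$. Your energy estimate (Step~1) is fine; you are even right that a cross-term $2\lambda(m)\int u(v-u)$ appears and must be absorbed, a point the paper passes over.

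The gap is in your Step~2, and it is exactly the step you flag as the obstacle. You try to work from a \emph{single} point $y_0$ with $u(y_0)=0$ and then ``convert the pointwise value $w(y_0)=v(y_0)$ into an $L^2$-mass via De Giorgi--Nash--Moser''. This does not go through: $w=v-u$ is $g$-\emph{super}harmonic, so the mean value inequality and the weak Harnack for supersolutions run in the wrong direction (large at a point does not force large $L^p$ average), while the local maximum principle applies to \emph{sub}solutions. There is no a priori modulus of continuity for $w$ with constants independent of the very quantity you are trying to bound, so the conversion is circular. Also, your fallback for $y_0\in B_r^g(x_0)\setminus B_{r/2}^g(x_0)$ (``iterate on a smaller ball that still contains a zero'') presupposes a zero of $u$ in the smaller ball, which you do not have.

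The paper's Step~2 avoids this by never isolating a single zero. It uses the Hardy inequality on $B_r^g(x_0)$ (transferred from the Euclidean ball via the exponential map) to get
\[
\int_{B_r^g(x_0)}\|\nabla^g w\|_g^2\;\ge\;c\int_{\{u=0\}\cap B_r^g(x_0)}\Bigl(\frac{v}{\delta_{r,x_0}}\Bigr)^2,
\]
and then proves a \emph{global} pointwise lower bound $v(x)\ge c\bigl(\tfrac{1}{r}\fint_{\partial B_r^g(x_0)}u\bigr)\,\delta_{r,x_0}(x)$ on the whole ball, obtained via mean value/Harnack on $B_{r/2}^g(x_0)$ together with a Hopf-type barrier $e^{-\gamma|x|^2/r^2}-e^{-\gamma}$ on the annulus $B_r^g(x_0)\setminus B_{r/2}^g(x_0)$. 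Plugging this into Hardy yields
\[
\int_{B_r^g(x_0)}\|\nabla^g w\|_g^2\;\ge\;c\,\Bigl(\tfrac{1}{r}\fint_{\partial B_r^g(x_0)}u\Bigr)^2\,\vol_g(\{u=0\}\cap B_r^g(x_0)).
\]
Now both the upper and lower bounds on $\int\|\nabla^g w\|_g^2$ carry the \emph{same} factor $\vol_g(\{u=0\}\cap B_r^g(x_0))$, which cancels regardless of its size; one concludes directly that if $\tfrac{1}{r}\fint u$ exceeds a fixed threshold then $\vol_g(\{u=0\}\cap B_r^g(x_0))=0$, and Remark~\ref{rk:sign} upgrades this to $u>0$ everywhere on the ball. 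Replace your single-point/DGNM step by this Hardy--Hopf argument and your proof goes through.
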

\begin{proof}
Let $B_r^{g}(x_0)\subset M$ and $v$ satisfying 
\begin{equation}\label{eq:v1}
\left\{\begin{array}{cccl}
-\Delta_{g} v & = &\lambda(m) u &\textrm{ in } B_r^{g}(x_0),\\
v & = & u &\textrm{ outside } B_r^{g}(x_0).
\end{array}
\right.
\end{equation}
We first notice that from maximum principle $v\geq u\geq 0$, so $\Om_{u}\subset\Om_{v}$.

\noindent\textbf{Step 1: } Using optimality of $u$ for the penalized version given in Proposition \ref{prop:pen1}, let us first prove 
\begin{equation}\label{eq:estimate1}
\int_{B_r^{g}(x_0)} \|\nabla^{g} (u-v)\|_g^2\, \textnormal{dvol}_{g} \leq \mu^*\, \vol_g(\{u=0\}\cap B_r^{g}(x_0)).
\end{equation}
We compare the energies of $u$ and $v$ in \eqref{eq:pbpen1}, which gives,
$$J(u)-J(v)\leq \mu^*\Big[\vol_g(\Om_{v})-\vol_g(\Om_{u})\Big]\,.$$
We now notice, using that $u-v$ vanishes on $\partial B_r^{g}(x_0)$,
\begin{eqnarray}\label{moyp2}
J(u)-J(v)
& = &  \int_{B_r^{g}(x_0)} \|\nabla^{g} (u-v)\|_g^2  \,  \textnormal{dvol}_{g}+2\int_{B_r^{g}(x_0)} g(\nabla^{g} (u-v),\nabla^{g} v)  \,  \textnormal{dvol}_{g}-\lambda(m)\int_{B_r^{g}(x_0)}(u^2-v^2) \,  \textnormal{dvol}_{g}
 \nonumber \\
 & = &\int_{B_r^{g}(x_0)} \|\nabla^{g} (u-v)\|_g^2 \,  \textnormal{dvol}_{g}-2\int_{B_r^{g}(x_0)}\Delta_{g} v\, (u-v) \,  \textnormal{dvol}_{g}-\lambda(m)\int_{B_r^{g}(x_0)}(u^2-v^2)  \,  \textnormal{dvol}_{g}\nonumber \\
 &=&\int_{B_r^{g}(x_0)} \|\nabla^{g} (u-v) \|_g^2 \,  \textnormal{dvol}_{g}+\lambda(m)\int_{B_r^{g}(x_0)}(u-v)^2 \,  \textnormal{dvol}_{g}
\end{eqnarray}
and therefore, 
\begin{equation}\label{eq:estimate(u-v)}
\int_{B_r^{g}(x_0)} \|\nabla^{g} (u-v)\|_g^2 \,  \textnormal{dvol}_{g}\leq J(u)-J(v)\leq \mu^*\, \vol_g(\{u=0\}\cap B_r^{g}(x_0))\,.
\end{equation}

\noindent\textbf{Step 2:} Using classical elliptic tools (here we do not use the optimality of $u$ anymore), we obtain in this step the reverse estimate\footnote{One can find a different proof in  \cite[Lemma 3.2]{AC81Exi} in the harmonic case; the proof we use here is an adaptation of an argument that has been communicated to us by Antoine Mellet, see \url{https://vimeo.com/118498464}}
\begin{equation}\label{eq:harnack}
\int_{B_r^{g}(x_0)} \|\nabla^{g}(u-v)\|_g^2 \,  \textnormal{dvol}_{g}\geq c\, \vol_g(\{u=0\}\cap B_r^{g}(x_0))\left(\frac{1}{r}\fint_{\partial B_r^{g}(x_0)}u \,  \textnormal{dvol}_{g|_{\partial B_r^{g}(x_0)}}\right)^2 \,,
\end{equation}
for some constant $c>0$.\\
For $r$ small enough  and for any $x_{0}$ in a compact set of $M$ the exponential map $\textnormal{exp}_{x_0}$ is a bi-Lipschitz continuous diffeomorphism from $B^{g}_r(x_0)$ to the ball $B_r$ of radius $r$ in the tangent space, with a Lipschitz constant independent of r and of $x_{0}$. If we denote by $u^*$ and $v^*$ the pull back of the functions $u$ and $v$ in the tangent space, by the Hardy inequality we have 
$$\int_{B_r}\|\nabla(u^*-v^*)\|_e^2\, \textnormal{dvol}_e \geq \frac{1}{4}\int_{B_r}\left(\frac{v^*-u^*}{r-|x|}\right)^2\, \textnormal{dvol}_e\, .$$
Now, using the bi-Lipschitz diffeomorphism given by the exponential map we get the same estimate with a smaller constant $c_{1}>0$: 
$$\int_{B^{g}_r(x_0)} \|\nabla^{g}(u-v)\|_g^2\,  \textnormal{dvol}_{g}\geq c_{1}\int_{B^{g}_r(x_0)}\left(\frac{v-u}{\delta_{r,x_{0}}}\right)^2\,  \textnormal{dvol}_{g}$$
where $\delta_{r,x_{0}}(x)=d_{g}(x,\partial B_r^{g}(x_0))$
 and then
$$\int_{B_r^{g}(x_0)} \|\nabla^{g}(u-v)\|_g^2\,  \textnormal{dvol}_{g} \geq c_{1}\int_{B_r^{g}(x_0)\cap \{u=0\}}\left(\frac{v}{\delta_{r,x_{0}}}\right)^2\,  \textnormal{dvol}_{g}\,.$$
To estimate the last term, it remains to understand the behavior of $v$ in the ball.
We introduce $\phi$ such that $u-\phi\in H^1_{0}(B^{g}_{r}(x_{0}))$ and $\phi$ harmonic in $B^{g}_{r}(x_{0})$, i.e. $\phi$ is the harmonic replacement of $u$ in that geodesic ball. Take a negative constant $k$ such that the Ricci curvature of the manifold $M$ satisfies, in $B^{g}_{r_0}(x_{0})$, $\textnormal{Ric} \ge (n-1) k$.
Then by the first part of Lemma \ref{lem:ricci} we have 
\[
\displaystyle \phi (x_0)\ge \frac{1}{\vol_{g_k}(S^{g_k}_r)} \int_{S^g_r(x_0)} \phi\, \textnormal{dvol}_{g}|_{S^g_r(x_0)}\,.
\]
Similarly to the proof of Corollary \ref{cor:continuity}, we write
\begin{eqnarray*}
 \frac{1}{\vol_{g_k}(S^{g_k}_r)} \int_{S^{g}_r(x_0)} \phi\, \textnormal{dvol}_{g}|_{S^{g}_r(x_0)} & =&  \frac{1}{\vol_g(S^{g}_r(x_0))}\,\frac{\vol_g(S^{g}_r(x_0))}{\vol_{g_k}(S^{g_k}_r)} \int_{S^{g}_r(x_0)} \phi\, \textnormal{dvol}_{g}|_{S^g_r(x_0)}\\
 & \ge & c_{2}\, \frac{1}{\vol_g(S^{g}_r(x_0))} \int_{S^{g}_r(x_0)} \phi\, \textnormal{dvol}_{g}|_{S^{g}_r(x_0)}
 \end{eqnarray*}
for some constant $c_{2}$ depending only on $k$, that is to say uniformly in $r<r_0$ and $x_{0}$. Then 
\[
\phi(x_0) \geq c_{2}\, \fint_{S^g_r(x_0)} \phi\, \textnormal{dvol}_{g}|_{S^g_r(x_0)} 
\]
Using now Harnack's inequality (see for example \cite[Theorem 8.20]{GT01Ell}), there is a constant $c_{3}>0$ independant on $x_{0}$ and $r\leq r_{0}$ such that $\phi\geq c_{3}\phi(x_{0})$ in $B^{g}_{\frac{r}{2}}(x_0)$.
Denoting $c_{4}=c_{2}c_{3}$ we finally get
\begin{equation}\label{eq:est1}
v\geq \phi\geq c_{4}\fint_{\partial B^{g}_{r} (x_0)}u \, \textnormal{dvol}_{g}|_{\partial B^g_r(x_0)}\;\;\;\;\textrm{ in }\;B^{g}_{\frac{r}{2}}(x_0).
\end{equation}
In $B^{g}_{r}(x_0) \setminus B^{g}_{\frac{r}{2}}(x_0)$ we argue as in the proof of the classical Hopf's lemma (see for example \cite[Section 6.4.2]{E10Par}; see also \cite[Lemma 4.1]{W05Opt}). In normal geodesic coordinates we consider the function $w(x)=e^{-\gamma\frac{|x|^2}{r^2}}-e^{-\gamma}$. After computation we have that in the chart representing $B^{g}_{r}(x_0) \setminus B^{g}_{\frac{r}{2}}(x_0)$, the Euclidean Laplacian of $w$ is 
$$\Delta_e w(x)=\frac{2\gamma}{r^2}\left[2\gamma\frac{|x|^2}{r^2}-n\right]e^{-\gamma\frac{|x|^2}{r^2}}\geq\frac{2\gamma}{r^2}\left[\frac{\gamma}{2}-n\right]e^{-\frac{\gamma}{4}}$$
so that, if $\gamma$ is large enough, $-\Delta_e w(x) \leq -k'$, for some positive constant $k'$, independant on $x_{0}$ and $r\leq r_{0}$. Then, choosing $r$ small enough and using \eqref{eq:deltag}, $-\Delta_{g}w\leq 0$ in $B^{g}_{r}(x_0) \setminus B^{g}_{\frac{r}{2}}(x_0)$. On the other hand, we define 
$$\varphi:=\phi-\left[\frac{c_{4}\displaystyle \fint_{S^g_r(x_0)} u\, \textnormal{dvol}_{g}|_{S^g_r(x_0)}}{\displaystyle w\left(\frac{r}{2}\right)}\right] w$$ which is such that $\varphi\geq 0$ on $ \partial B^{g}_{\frac{r}{2}}(x_0)$,  $\varphi=u\geq 0$ on $\partial B^{g}_{r}(x_0)$ and $-\Delta_{g}\varphi\geq 0$. We obtain from maximum principle that 
\begin{equation}\label{eq:est2}
\displaystyle v\geq \phi\geq   \left[ c_{4}\fint_{\partial B^{g}_{r}(x_0)}u  \, \textnormal{dvol}_{g}|_{\partial B^g_r(x_0)}\right]\frac{w}{w\left(\frac{r}{2}\right)}
\end{equation}
in $B^{g}_{r}(x_0) \setminus B^{g}_{\frac{r}{2}}(x_0)$. We finally remark that there is a constant $c_{5}=c_{5}(\gamma)$ such that $\displaystyle{\frac{e^{-\gamma y^2}-e^{-\gamma}}{e^{-\gamma/4}-e^{-\gamma}}}\geq c_{5}(1-y)$ for $y\in [0,1]$, so that $\displaystyle{\frac{w}{w(r/2)}\geq c_{5}\left(1-\frac{|x|}{r}\right)}$. Combining \eqref{eq:est1} and \eqref{eq:est2}
we get that there exists a constant $c_{6}$ independant on $x_{0}$ and $r\leq r_{0}$ such that
\[
v(x)\geq c_{6}\left(\frac{1}{r}\fint_{\partial B_{r}^{g}(x_0)}u \,  \textnormal{dvol}_{g|_{\partial B_r^{g}(x_0)}} \right)(r-d_{g}(x,x_0))= c_{6}\left(\frac{1}{r}\fint_{\partial B_{r}^{g}(x_0)}u \,  \textnormal{dvol}_{g|_{\partial B_r^{g}(x_0)}} \right) \delta_{r_{0},r_{0}}(x)
\]
which leads to the estimate \eqref{eq:harnack}.

\noindent{\bf Conclusion:} Combining \eqref{eq:estimate1} and \eqref{eq:harnack}, we obtain that if $\displaystyle{\frac{1}{r}\fint_{\partial B_{r}^{g}(x_0)}u\,  \textnormal{dvol}_{g|_{\partial B_r^{g}(x_0)}} > \sqrt{\frac{\mu^*}{c}}}$ then $u=v$  almost everywhere on $B_{r}^{g}(x_0)$. Using the representation \eqref{eq:pointwise} for both $u$ and $v$, we deduce that $u=v>0$ everywhere on $B_{r}^{g}(x_0)$, which concludes the proof.
\qed
\end{proof}

\begin{remark}\label{rk:sign}
Note that from \eqref{eq:estimate1} and using the representation \eqref{eq:pointwise}, we deduce that if $u>0$ almost everywhere in $B_{r}^g(x_0)$, then $u=v$ everywhere in $B_{r}^g(x_0)$, and therefore $u>0$ everywhere in $B_{r}^g(x_0)$. This fact will be used in Sections \ref{ssect:pen} and \ref{ssect:positivity}. 
\end{remark}

\begin{corollary}\label{cor:lip}
Let $M$ be a compact Riemannian manifold, $m\in(0,\vol_g(M))$ and $u$ a solution of \eqref{eq:pbfonc}. Then $u$ is Lipschitz continuous in $M$.
\end{corollary}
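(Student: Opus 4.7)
The plan is to adapt the Alt--Caffarelli strategy to the Riemannian setting, converting Lemma \ref{lem:lip} into a linear growth estimate for $u$ near the free boundary $\partial\Om_u$, and then combining it with interior elliptic regularity inside $\Om_u$. First, I would extract from Lemma \ref{lem:lip} a sphere-average bound: at any point $x_0$ where the upper semi-continuous representative of $u$ provided by Corollary \ref{cor:continuity} vanishes, the contrapositive of Lemma \ref{lem:lip} forces
\[
\fint_{\partial B_r^{g}(x_0)} u \, \textnormal{dvol}_g|_{\partial B_r^g(x_0)} \leq C\, r \qquad \forall\, r\in(0,r_0),
\]
with constants uniform in $x_0$ thanks to compactness of $M$.

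Next, I would upgrade this sphere-average bound into a pointwise bound of the form $\sup_{B_{r/2}^g(x_0)} u \leq C'\, r$. Since $\tilde u := u + \lambda(m)\|u\|_\infty w$, with $w(x) = |x|^2$ in local geodesic coordinates, is subharmonic on a small geodesic ball around $x_0$ (a fact already established in the proof of Corollary \ref{cor:continuity}), I would use the monotonicity of normalized spherical means from the second part of Lemma \ref{lem:ricci}, together with the coarea formula, to transfer the sphere-average bound on $u$ into a ball-average bound $\fint_{B_r^g(x_0)} \tilde u \leq C''\, r$ (the quadratic correction contributing only lower-order $r^2$ terms). The standard $L^\infty$--$L^1$ estimate for subsolutions of the uniformly elliptic operator $\Delta_g$ then gives $\sup_{B_{r/2}^g(x_0)} u \leq C'''\, r$. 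Applying this with $r = 2\, d_g(y,\partial\Om_u)$ centered at the nearest free-boundary point to $y$ delivers the linear growth bound $u(y) \leq C_L\, d_g(y, \partial\Om_u)$ for every $y\in\Om_u$ close enough to the free boundary.

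Finally, for any $y \in \Om_u$ with $d := d_g(y,\partial\Om_u)$ small, the previous step yields $\|u\|_{L^\infty(B_{d/2}^g(y))} \leq \tfrac{3}{2}C_L\, d$. Since $u$ classically solves $-\Delta_g u = \lambda(m)u$ on $B_{d/2}^g(y) \subset \Om_u$, the standard interior gradient estimate for linear elliptic equations with smooth coefficients produces $|\nabla^g u(y)| \leq C_0$ with $C_0$ independent of $y$. Combined with $u \equiv 0$ outside $\Om_u$, this yields Lipschitz continuity on all of $M$. The main technical obstacle is the second step: converting sphere averages into sup bounds in the Riemannian framework while tracking the curvature correction $\lambda(m)\|u\|_\infty w$ and ensuring uniform constants over small geodesic balls --- this is made possible by the uniform ellipticity of $\Delta_g$ in normal coordinates \eqref{eq:deltag} and the compactness of $M$.
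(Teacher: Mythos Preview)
Your proposal is correct and follows the same Alt--Caffarelli strategy as the paper: use Lemma~\ref{lem:lip} to bound sphere averages of $u$ by $Cr$ near the free boundary, then convert this into a gradient bound via elliptic estimates. The routing differs slightly. You center at a zero of $u$, upgrade the sphere-average bound to a linear growth estimate $u(y)\leq C_L\,d_g(y,\partial\Om_u)$ (via the mean-value inequality \eqref{eq:meanvalue} or a local maximum principle for subsolutions), and then invoke the standard interior gradient estimate for $-\Delta_g u=\lambda(m)u$ on the ball $B^g_{d/2}(y)$. The paper instead centers at $x_0\in\Om_u$, takes the maximal $r$ with $B^g_r(x_0)\subset\Om_u$, and passes directly from the sphere-average bound to a bound on $\nabla^g u(x_0)$ by splitting $u=(u-w)+w$ with $w$ solving $-\Delta_g w=\lambda(m)u$, $w=0$ on $\partial B^g_r(x_0)$: a Green-function estimate handles the harmonic part $u-w$, and a scaling argument handles $w$. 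Your packaging via the linear-growth intermediate step is arguably cleaner and is common in more recent treatments; the paper's Green-function approach is closer to the original argument in \cite{AC81Exi}. Two small points worth making explicit in your write-up: (i) the closedness of $\{u=0\}$ (equivalently, openness of $\Om_u$), which you need to speak of a nearest free-boundary point, follows immediately from your linear growth bound, and (ii) your Step~2 is already contained in \eqref{eq:meanvalue} applied at each point of $B^g_{r/2}(x_0)$, so the detour through Lemma~\ref{lem:ricci} and coarea is unnecessary.
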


\begin{remark}\label{rk:noncompact}
If $M$ is noncompact, one deduces that $u$ is locally Lipschitz in $M$.
Also, combined with Proposition \ref{prop:saturation}, one deduces that If $M$ is connected and $\Om^*$ solves \eqref{eq:pbforme2}, then $\Om^*$ is an open set and hence solves \eqref{eq:pbforme}. See Remark \ref{rk:connectedness} for a counterexample in the disconnected case. 
\end{remark}

\begin{remark}
In order to prove that $\Om_{u}$ is open (where $u$ solves \eqref{eq:pbfonc}), we only need to prove that the eigenfunction $u$ is continuous on $M$ (rather than Lipschitz continuous), which can be obtained in several ways, see for example the elementary proof in \cite[Lemma 3.8]{BHP05Lip}, or also \cite[Remark 3.10]{BHP05Lip} and \cite[Section 3]{W05Opt} based on a classical method from Morrey \cite{M08Mul}; this last method gives H\"older regularity for any order $\alpha<1$. Nevertheless, we will need Lipschitz continuity in the following sections.
\end{remark}

\noindent\textbf{Proof of Corollary \ref{cor:lip}:} 
The fact that $\Om_{u}$ is open is a consequence of Corollary \ref{cor:continuity} and Lemma \ref{lem:lip}. Indeed, let $x_{0}\in M$ such that $u(x_{0})>0$: then from \eqref{eq:meanvalue} we get that ($c'$ and $r_{0}$ coming from Corollary \ref{cor:continuity}) $$\fint_{\partial B^g_{r}(x_{0})}u \,  \textnormal{dvol}_{g|_{\partial B_r^{g}(x_0)}}+r^2 \geq \frac{u(x_{0})}{c'}$$ for every $r\leq r_{0}$. So $$\frac{1}{r}\fint_{\partial B^{g}_{r}(x_0)}u\,  \textnormal{dvol}_{g|_{\partial B_r^{g}(x_0)}} \geq \frac{u(x_{0})}{c'r}-r$$
and
with Lemma \ref{lem:lip}, we obtain $u>0$ in $B^{g}_{r}(x_{0})$ for $r$ small enough.

We are now in position to prove the Lipschitz continuity of $u$. Since $\Om_{u}$ is an open set, we can choose the maximal radius $r$ such that the geodesic ball $B^{g}_{r}(x_{0})$ is included in $\Om_{u}$. As $u$ is smooth inside $\Om_{u}$, we may assume $r<r_{0}$. Thanks to Lemma \ref{lem:lip},
$$\frac{1}{r+\delta}\fint_{\partial B^{g}_{r+\delta}(x_{0})}u\,  \textnormal{dvol}_{g|_{\partial B_{r+\delta}^{g}(x_0)}}  \leq C\,,$$ where $C$ is introduced in Lemma \ref{lem:lip}, and for all small $\delta>0$. Therefore $$\frac{1}{r}\fint_{\partial B^{g}_r(x_{0})}u\,  \textnormal{dvol}_{g|_{\partial B_r^{g}(x_0)}}  \leq C.$$
From this estimate, we deduce now that the gradient of $u$ is bounded:
we introduce $w$ defined by
$$\left\{\begin{array}{cccl}
-\Delta_{g} w & = &\lambda(m) u &\textrm{ in } B^{g}_r(x_{0}),\\
w & = & 0 &\textrm{ outside } B^{g}_r(x_{0}).
\end{array}
\right.
$$
On one hand, as $u-w$ is harmonic, introducing $G$ the Green function with Dirichlet boundary condition on $\partial B^{g}_{r}(x_{0})$ (see \cite[Theorem 4.17]{A98Som}), we have
\begin{equation}\label{eq:green}
\begin{split}
\;\;\|\nabla^{g}(u-w)(x)\|_g\leq \int_{\partial B^{g}_{r}(x_0)} \underbrace{\|(\nabla^{g}_{y})^2G(x,y)\|_g}_{\leq kd(x,y)^{-n}}u(y)\,  \textnormal{dvol}_{g|_{\partial B_r^{g}(x_0)}}(y) \leq\\
\leq k\,\left(\frac{r}{2}\right)^{-n}\int_{\partial B_{r}(x_{0})}u(y)\,  \textnormal{dvol}_{g|_{\partial B_r^{g}(x_0)}}(y) \leq kC\,,
\end{split}
\end{equation}
for $x \in B^{g}_{r}(x_{0})$, where $k$ is a constant depending on the distance of $x$ to $\partial B^{g}_{r}(x_{0})$. This constant $k$ is not depending on $x$, if we take $x \in B^{g}_{\frac{r}{2}}(x_{0})$ (according to \cite[4.10]{A98Som}, $k$ is not depending on $x$ if the injectivity radius $i_x$ at $x$ with respect to the manifold $B^{g}_{r}(x_{0})$ is bigger than a positive constant $\rho$, and this is the case for $x \in B^{g}_{\frac{r}{2}}(x_{0})$).\\
We use now the scaling argument introduced at the beginning of Section \ref{sect:reg} to prove:
\begin{equation}\label{eq:perturb}
\|\nabla^{g} w(x)\|_g\leq Cr\lambda(m)\|u\|_{L^\infty(B_{r}(x_{0}))}
\end{equation}
for $x \in B_{\frac{r}{2}}^g(x_0)$. 
Indeed, considering $y \in B_{\frac12}$, the Euclidean ball of radius ${\frac12}$, and endowing on $B_{\frac12}$ the metric $\bar g$, let us define $\widetilde{w}(y)=w(\textnormal{exp}^g_x(r\Theta(y)))$, with $\Theta$ defined in \eqref{eq:theta}. We have
\[
{-\Delta_{\bar g} \widetilde w (y) = -r^2\, \Delta_{g} w (\textnormal{exp}^g_x(ry))} = r^2\lambda(m)u(\textnormal{exp}^g_x(r\Theta(y)))
\]
where $\bar g = r^{-2} g$, so by interior gradient estimates, there is a constant $C$ such that
$$|\nabla^e \widetilde w(0) | \leq Cr^2\lambda(m)\|u\|_{L^\infty(B_{r}(x_{0}))}$$
and then also
$$\|\nabla^{\bar g} \widetilde w(0) \|_{\bar g} \leq Cr^2\lambda(m)\|u\|_{L^\infty(B_{r}(x_{0}))}\,.$$
Now, using that $\nabla^{g} w(x)=\frac{1}{r}	\nabla^{\bar g} \widetilde{w}(0)$, we obtain \eqref{eq:perturb}. Combining with \eqref{eq:green}, we obtain a uniform bound on $\nabla^{g} u(x)$ for $x \in B^{g}_{\frac{r}{2}}(x_{0})$, and therefore $u$ is Lipschitz continuous.

\qed

\subsection{Refined penalization of the volume constraint}\label{ssect:pen}

In order to investigate further regularity properties of the free boundary, we need to prove a more involved version (though localized) of the penalization property stated in Proposition \ref{prop:pen1}:
let $u$ be a solution of \eqref{eq:pbfonc}, and $B^{g}_{r_{0}}(x_{0})$ be a geodesic ball
 centered at $x_{0}\in\partial\Omega_u$. We define
\begin{equation}\label{eq:F}
\F=\F(u,x_{0},{r_0})=\{v\in H^1(M), u-v\in H^1_0(B^{g}_{r_0}(x_{0}))\}.
\end{equation}
For $h>0$, we denote by $\mu_-(x_{0},{r_0},h)$ the largest $\mu_-\geq 0$ such that,
\begin{equation}\label{eq:mu-}
\forall\; v \in\F\textrm{ such that }  m-h\le \vol_g(\Omega_v)\le  m,\;
J(u)+\mu_-\vol_g(\Omega_u)\le J(v)+\mu_-\vol_g(\Omega_v).
\end{equation}
We also define $\mu_+(x_{0},{r_0},h)$ as the smallest $\mu_+\geq 0$ such that,
\begin{equation}\label{eq:mu+}
\forall\; v \in\F\textrm{ such that }  m\le \vol_g(\Omega_v)\le  m+h,\;
J(u)+\mu_+\vol_g(\Omega_u)\le J(v)+\mu_+\vol_g(\Omega_v).
\end{equation}
The following result deals with the aymptotic behaviour of these penalization coefficients:
\begin{prop}\label{prop:pen}
Let $u$  be a solution of \eqref{eq:pbfonc}. There exists $\Lambda=\Lambda_{u}\geq 0$  such that, for every $x_{0}\in \partial\Om_{u}$ and ${r_0}$ small enough, there exists $h_0>0$ such that,
\begin{multline}\label{limmu}
\forall\; h\in(0,h_0),\; \mu_-(x_{0},{r_0},h)\le\Lambda\le\mu_+(x_{0},{r_0},h)<+\infty, 
\\\textrm{ and, moreover, }
\lim_{h\to 0}\mu_+(x_{0},{r_0},h)=\lim_{h\to 0}\mu_-(x_{0},{r_0},h)=\Lambda.
\end{multline}
\end{prop}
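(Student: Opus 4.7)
The proof adapts the strategy of \cite[Proposition 3.2]{BL09Reg} to the Riemannian setting, identifying $\Lambda$ as the Lagrange multiplier of the volume constraint in \eqref{eq:pbfonc}. Since $\Omega_u$ is open (Corollary \ref{cor:lip}), has volume exactly $m$ (Proposition \ref{prop:saturation}), and is a proper subset of the connected manifold $M$, and since $x_0\in\partial\Omega_u$, by shrinking $r_0$ if necessary one selects two auxiliary geodesic balls: $B_0$ disjoint from $\overline{B^g_{r_0}(x_0)}$ and $B_1\subset B^g_{r_0}(x_0)$, both meeting $\partial\Omega_u$. This allows volume-changing perturbations of $u$ localized either in $B_0$ (``reference'' competitors outside the zone of $\F$) or in $B_1$ (local competitors belonging to $\F$).

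On each such ball $B$ I construct a $C^1$ family $(v^B_t)_{|t|<t_0}$ in $H^1(M)$ satisfying $v^B_0=u$, $v^B_t\equiv u$ on $M\setminus B$, $\vol_g(\Omega_{v^B_t})=m+t$ exactly, and $J(v^B_t)=J(u)-\Lambda_B\,t+O(t^2)$ for some $\Lambda_B\in\R$. This is done in normal coordinates where the metric is a smooth perturbation of the Euclidean one (curvature terms being $O(r_0^2)$, see Section \ref{sect:reg}), perturbing $u$ along a normal direction to $\partial\Omega_u\cap B$ at a selected boundary point, using the Lipschitz regularity of $u$ (Corollary \ref{cor:lip}) and the smoothness of $u$ on $B\cap\Omega_u$ coming from the eigenvalue equation; an implicit function correction enforces the exact volume and a standard shape-derivative argument yields the coefficient $-\Lambda_B$ in the expansion of $J$. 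For the uniqueness of $\Lambda_B$: given two disjoint such balls $B,B'$, a direct computation using the disjointness of the modifications gives $J(v^B_t+v^{B'}_s-u)=J(v^B_t)+J(v^{B'}_s)-J(u)$ and $\vol_g(\Omega_{v^B_t+v^{B'}_s-u})=m+t+s$; with $s=-t$ the function $v^B_t+v^{B'}_{-t}-u$ is admissible in \eqref{eq:pbfonc}, and optimality of $u$ yields $(\Lambda_{B'}-\Lambda_B)\,t+O(t^2)\ge 0$ for all small $|t|$, so $\Lambda_B=\Lambda_{B'}=:\Lambda$. Applying the same inequality to a single family for $t<0$ shows $\Lambda\ge 0$.

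The penalization bounds now follow by a two-sided sandwich. Given $w\in\F$ with $\vol_g(\Omega_w)=m-\eta$ for $\eta\in(0,h]$, the function $\tilde w:=w+v^{B_0}_\eta-u$ has volume $m$ (by disjointness of $B_0$ and $B^g_{r_0}(x_0)$) and $J(\tilde w)=J(w)+J(v^{B_0}_\eta)-J(u)=J(w)-\Lambda\eta+O(\eta^2)$; optimality of $u$ gives $J(w)-J(u)\ge\Lambda\eta-O(\eta^2)$, whence $\mu_-(x_0,r_0,h)\ge\Lambda-Ch$. Conversely, testing the inequality defining $\mu_-$ directly against $v^{B_1}_{-\eta}\in\F$ for each $\eta\in(0,h]$ forces $\mu_-(x_0,r_0,h)\le(J(v^{B_1}_{-\eta})-J(u))/\eta=\Lambda+O(\eta)$, and letting $\eta\to 0^+$ yields $\mu_-(x_0,r_0,h)\le\Lambda$. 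These two bounds give $\mu_-(x_0,r_0,h)\in[\Lambda-Ch,\Lambda]$ and $\lim_{h\to 0}\mu_-(x_0,r_0,h)=\Lambda$. A symmetric argument, substituting $v^{B_0}_{-\eta}$ and $v^{B_1}_{+\eta}$, gives $\mu_+(x_0,r_0,h)\in[\Lambda,\Lambda+Ch]$ and the corresponding limit. The principal obstacle is the construction of the families $(v^B_t)$ without any a priori regularity of $\partial\Omega_u$: one must locate inside $B$ a boundary point with sufficiently non-degenerate blow-up of $u$, a technicality whose Euclidean analogue is handled in \cite{BL09Reg,RTV18Exi}.
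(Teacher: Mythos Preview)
Your overall strategy---identify $\Lambda$ via explicit one-parameter test families and then sandwich $\mu_\pm(h)$ directly---is a legitimate alternative to the paper's route, and your two-ball argument for the well-definedness of $\Lambda$ is clean. The paper proceeds differently: it defines $\Lambda$ through the domain-variation Euler--Lagrange identity (Lemma~\ref{lem:euler}) and then proves each of the bounds and limits on $\mu_\pm(h)$ by introducing auxiliary penalized minimization problems, extracting their own Lagrange multipliers $\Lambda_\mu$ (resp.\ $\Lambda_h$), and showing via compactness that these converge to $\Lambda$, which forces the desired inequalities by contradiction. Your approach is more elementary once the families $(v^B_t)$ are in hand; the paper's approach avoids having to build such families at all.

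The one genuine weak spot is your description of the construction of $(v^B_t)$. Speaking of ``perturbing $u$ along a normal direction to $\partial\Omega_u$'' and of locating ``a boundary point with sufficiently non-degenerate blow-up'' is problematic here: at this stage of the argument $\partial\Omega_u$ has no regularity whatsoever, and non-degeneracy (Lemma~\ref{lem:nondeg}) is proved \emph{after} Proposition~\ref{prop:pen} and in fact uses $\mu_-$, so invoking it would be circular. The right way to produce the families is exactly the domain-variation device of Lemma~\ref{lem:euler}: set $u_t(x)=u(\exp_x(t\Phi(x)))$ with $\Phi\in C^\infty_c(B,TM)$ chosen so that $\int_{\Omega_u}\div_g\Phi\neq 0$ (possible because $0<\vol_g(\Omega_u\cap B)<\vol_g(B)$, cf.\ the first step of the paper's proof and Remark~\ref{rk:sign}); then $t\mapsto(\vol_g(\Omega_{u_t}),J(u_t))$ is smooth in $t$, no boundary regularity required, and an implicit-function reparametrization gives exact volume $m+t$ with the quadratic remainder you need. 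With this correction your sandwich argument is complete and yields the same $\Lambda$ as Lemma~\ref{lem:euler}.
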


We start writing a weak optimality condition
for the constrained problem \eqref{eq:pbfonc}; in this way, we can define $\Lambda$ as a Lagrange multiplier.
\begin{lemma}[Euler-Lagrange equation]\label{lem:euler}
If $u$ is a solution of \eqref{eq:pbfonc}, then there exists
$\Lambda=\Lambda_{u}\geq 0$ such that, 
\begin{equation}\label{eq:EL}\forall \Phi\in C^{\infty}(M,TM),\;\;
\int_M \Big[2\, g(D\Phi\nabla^{g} u, \nabla^{g} u)+(\lambda(m) 
 u^2-  \|\nabla^{g} u\|_g^2) \, \div_{g} \Phi\Big] \,  \textnormal{dvol}_{g}= \Lambda\int_{\Om_u}\div_{g} \Phi\ \,  \textnormal{dvol}_{g}\,. 
\end{equation}
\end{lemma}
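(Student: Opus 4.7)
The strategy is to test the minimality of $u$ in \eqref{eq:pbfonc} against the inner perturbation $u_t := u\circ\phi_t^{-1}$, where $\phi_t$ is the flow generated by $\Phi$ (that is, $\partial_t\phi_t = \Phi\circ\phi_t$ and $\phi_0=\mathrm{id}$), and to recover $\Lambda$ as a Lagrange multiplier for the volume constraint via the refined penalization of Proposition \ref{prop:pen}. Since $\Om_{u_t} = \phi_t(\Om_u)$ and $u_t\in H^1(M)$, the two shape derivatives
\[
a := \left.\frac{d}{dt}\right|_{t=0}\vol_g(\Om_{u_t}) = \int_{\Om_u}\div_g\Phi\,\textnormal{dvol}_g, \qquad b := \left.\frac{d}{dt}\right|_{t=0} J(u_t)
\]
govern the first-order variation, and \eqref{eq:EL} is exactly $b + \Lambda a = 0$ after rearrangement. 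To compute $b$, I would rewrite $J(u_t)$ via the pullback as $\int_M\|\nabla^{\phi_t^*g}u\|_{\phi_t^*g}^2\,\textnormal{dvol}_{\phi_t^*g} - \lambda(m)\int_M u^2\,\textnormal{dvol}_{\phi_t^*g}$ and differentiate at $t=0$ using $\left.\frac{d}{dt}\right|_{t=0}\phi_t^*g = \mathcal{L}_\Phi g$, together with $\mathrm{tr}_g(\mathcal{L}_\Phi g) = 2\div_g\Phi$ and $(\mathcal{L}_\Phi g)(\nabla^g u,\nabla^g u) = 2g(D\Phi\,\nabla^g u,\nabla^g u)$, yielding
\[
b = \int_M\Big[\|\nabla^g u\|_g^2\,\div_g\Phi - 2g(D\Phi\,\nabla^g u,\nabla^g u) - \lambda(m)\,u^2\,\div_g\Phi\Big]\,\textnormal{dvol}_g.
\]

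Next, by a partition of unity I would reduce to three cases. If $\Phi$ has compact support strictly inside $\Om_u$, then $\phi_t$ preserves $\Om_u$ for small $t$, so $\vol_g(\Om_{u_t})=m$ and $u_t$ is admissible in \eqref{eq:pbfonc}; both signs of $t$ give $J(u_t)\geq J(u)$, hence $b=0$, while $a=0$ by the divergence theorem. If $\Phi$ is supported in $M\setminus\overline{\Om_u}$, then $u_t=u$ and $\Om_{u_t}=\Om_u$, so the identity is trivial. The essential case is $\Phi$ supported in a geodesic ball $B^g_{r_0}(x_0)$ with $x_0\in\partial\Om_u$ and $r_0$ small enough that Proposition \ref{prop:pen} applies; this reduction is legitimate since $\partial\Om_u\subset M$ is compact and can be covered by finitely many such balls.

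In this essential case, $\phi_t$ equals the identity outside $B^g_{r_0}(x_0)$, so $u_t\in\F(u,x_0,r_0)$ for $|t|$ small and $h_t := |\vol_g(\Om_{u_t})-m|\to 0$ as $t\to 0$. Using \eqref{eq:mu-} when $\vol_g(\Om_{u_t})\leq m$ and \eqref{eq:mu+} when $\vol_g(\Om_{u_t})\geq m$ yields in all subcases
\[
J(u_t) - J(u) \geq -\mu(t)\bigl(\vol_g(\Om_{u_t}) - m\bigr),
\]
where $\mu(t)\in\{\mu_-(x_0,r_0,h_t),\,\mu_+(x_0,r_0,h_t)\}$. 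Since $\vol_g(\Om_{u_t})-m = at + o(t)$ and Proposition \ref{prop:pen} gives $\mu_\pm(x_0,r_0,h_t)\to\Lambda$, dividing by $t$ and passing to the limits $t\to 0^+$ (resp.\ $t\to 0^-$) produces $b\geq -\Lambda a$ (resp.\ $b\leq -\Lambda a$), hence $b+\Lambda a=0$; summing over the partition of unity yields \eqref{eq:EL}. The main obstacle is twofold: the careful computation of the Riemannian shape derivative of the Dirichlet energy, where one must correctly identify the Lie-derivative contribution as the term $-2g(D\Phi\,\nabla^g u,\nabla^g u)$ in the statement; and the two-sided limit argument, which crucially relies on the sharp asymptotic equality in Proposition \ref{prop:pen} providing a common limit $\Lambda$ from both sides and thereby closing the gap between $\mu_-$ and $\mu_+$ regardless of the sign of $a$.
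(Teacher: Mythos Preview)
Your computation of the shape derivatives $a$ and $b$ via the flow $\phi_t$ and the Lie derivative of the metric is correct and matches what the paper obtains (the paper writes $u_t(x)=u(\exp_x(t\Phi(x)))$ instead of $u\circ\phi_t^{-1}$, but the first-order expansions agree). The difficulty lies elsewhere: your argument is \emph{circular}. You invoke Proposition~\ref{prop:pen} to produce the common limit $\Lambda$ of $\mu_\pm(x_0,r_0,h)$ as $h\to 0$, but in the paper the proof of Proposition~\ref{prop:pen} opens with ``Let $\Lambda\geq 0$ be as in Lemma~\ref{lem:euler}'' and then shows, step by step, that $\mu_-\leq\Lambda\leq\mu_+$ and that both converge to this $\Lambda$. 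In other words, the very existence of the limit in \eqref{limmu}, and its identification with the number $\Lambda$, rests on the Euler--Lagrange equation you are trying to establish. Your two-sided limit argument $b\geq -\Lambda a$ and $b\leq -\Lambda a$ therefore presupposes the conclusion.

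The paper's route avoids this entirely: once the expansions
\[
\vol_g(\Om_{u_t})=m - t\int_{\Om_u}\div_g\Phi\,\textnormal{dvol}_g + o(t),\qquad J(u_t)=J(u)-tb+o(t)
\]
are in hand, the authors simply observe that the linear form $\Phi\mapsto\int_{\Om_u}\div_g\Phi\,\textnormal{dvol}_g$ is not identically zero, and apply the Karush--Kuhn--Tucker conditions for the minimization of $J$ under the inequality constraint $\vol_g(\Om_v)\leq m$. This yields directly a multiplier $\Lambda\geq 0$ with $b+\Lambda a=0$ for every $\Phi$, with no partition of unity, no localization near $\partial\Om_u$, and no appeal to the penalization machinery. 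Your derivative computations are thus the right first step; to close the argument non-circularly you should replace the penalization input by the elementary KKT/Lagrange-multiplier rule.
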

\begin{proof} {For $\Phi\in C^{\infty}(M,TM)$ and $t\in\R$,
we consider $u_{t}(x)=u(\textnormal{exp}_x(t\Phi(x))\in H^1(M)$.
If $t$ is small enough, $x\mapsto \textnormal{exp}_x(t\Phi(x))$ is a $C^1$-diffeomorphism of $M$}, so with a change of variable we get:
\[\vol_g(\Om_{u_{t}})=\vol_g(\Om_u)-t\int_{\Om_u}\div_{g} \Phi \,  \textnormal{dvol}_{g} + o(t), \]
$$J(u_{t})=J(u)+t
\int_M \Big[2g(D\Phi\nabla^{g} u,\nabla^{g} u)-  \|\nabla^{g} u\|_g^2\div_{g}\Phi+\lambda(m)
 u^2 \div_{g} \Phi \Big] \,  \textnormal{dvol}_{g} + o(t).$$
 Moreover, the linear form $\Phi\mapsto \int_{\Om_{u}} \div_{g}\Phi  \,  \textnormal{dvol}_{g}$ does not vanish
 , so we can apply the Karush-Kuhn-Tucker condition for the minimization of $J$ among $v\in H^1(M)$ with the constraint $\vol_g(\Om_{v})\leq  m$: we get the existence of $\Lambda\in\R_{+}$ such that
$$\frac{d}{dt}J(u_{t})_{|t=0}=-\Lambda\frac{d}{dt}\vol_g(\Om_{u_{t}})_{|t=0},\;\;\;\forall \Phi\in C^\infty(M,TM),$$
which concludes the proof.
\qed
\end{proof}

\begin{remark}
We can rewrite this Euler-Lagrange equation in the following way:
$$\forall \Phi\in C^{\infty}(M,TM),\;\;\lim_{\eps\to0} \int_{\partial\{u>\eps\}}g((\|\nabla^{g} u\|_g^2-\Lambda) \Phi, \nu_{\eps}) \,  \textnormal{dvol}_{g}=0,$$
where $\nu_{\eps}$ denotes the outward unit normal to $\partial\{u>\eps\}$, so this property can be seen as a very weak formulation of the extremality condition for $\lambda_{1}$, without regularity assumption. 
Indeed, 
 we first notice that inside $\Om_{u}$,
$$2 g(D\Phi\nabla^{g} u,\nabla^{g} u)-g(\nabla^{g}(\lambda(m)u^2-\|\nabla^{g} u\|_g^2),\Phi)=2\div_{g}[g(\Phi,\nabla^{g} u)\nabla^{g} u]-2g((\underbrace{\Delta_{g} u+\lambda(m)u}_{=0})\Phi,\nabla^{g} u),$$
so using Gauss-Green formula, we obtain
\begin{multline*}
\int_M \Big[2 g(D\Phi\nabla^{g} u,\nabla^{g} u)+(\lambda(m) 
 u^2-  \|\nabla^{g} u\|_g^2) \div_{g}\Phi\Big] \,  \textnormal{dvol}_{g}\\=\lim_{\eps\to0} \int_{\partial\{u>\eps\}}\ g(2g(\Phi,\nabla^{g} u)\nabla^{g} u+(\lambda(m)u^2-\|\nabla^{g} u\|_g^2)\Phi  , \nu_{\eps}) \,  \textnormal{dvol}_{g}
 =\lim_{\eps\to0} \int_{\partial\{u>\eps\}}\|\nabla^{g} u\|_g^2 g(\Phi, \nu_{\eps}) \,  \textnormal{dvol}_{g} 
 \end{multline*}
since $\nabla^{g} u=\|\nabla^{g} u\|_g\nu_{\eps}$ on $\{u=\eps\}$.
\end{remark}

We are now in position to prove Proposition \ref{prop:pen}: the proof is very similar to \cite[Theorem 1.5]{BL09Reg}, and the adaptation to the framework of a manifold requires very little change. Therefore, we only sketch the argument; see also \cite[Section 3.2.3]{H17Sha} for a heuristical description of the argument and \cite{RTV18Exi} in a different framework. Until the end of this section, $u$ denotes a solution of \eqref{eq:pbfonc}, $x_{0}\in\partial\Om_u
$, ${r_0}>0$, and $\F$ is defined in \eqref{eq:F}; we denote $\mu_{\pm}(h)$ instead of $\mu_{\pm}(x_{0},{r_0},h)$ to simplify the presentation and as no confusion is possible, and we denote $B^{g}_{r}$ for $B^{g}_{r}(x_{0})$. 
We first need the following lemma which helps obtaining existence results:
\begin{lemma}
There exists a constant $C=C(u)$ such that
for ${r_0}$ small enough,
\[\forall v\in\F,\;\; J(v)\geq \frac{1}{2}\int_{B^{g}_{r_0}}\|\nabla^{g} v\|_g^2 \,  \textnormal{dvol}_{g}  -C. \]
\end{lemma}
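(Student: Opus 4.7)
The strategy is to exploit that any $v \in \F$ differs from $u$ only on the small ball $B^g_{r_0}$, and to control the $L^2$ mass $\int_M v^2$ using a Poincaré inequality on $B^g_{r_0}$ (whose constant shrinks with $r_0$) so that the negative term $-\lambda(m)\int_M v^2$ can absorb at most half of $\int_{B^g_{r_0}}\|\nabla^g v\|_g^2$ up to an additive constant depending only on $u$.

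First, I would split $J(v)$ into its contributions inside and outside $B^g_{r_0}$. Using $v=u$ on $M\setminus B^g_{r_0}$, the ``outside'' contribution
\[
\int_{M\setminus B^g_{r_0}}\!\|\nabla^g u\|_g^2\,\textnormal{dvol}_g - \lambda(m)\int_{M\setminus B^g_{r_0}}\!u^2\,\textnormal{dvol}_g
\]
is a constant depending only on $u$, so it can be absorbed into $-C$. It remains to handle
\[
I(v):=\int_{B^g_{r_0}}\!\|\nabla^g v\|_g^2\,\textnormal{dvol}_g - \lambda(m)\int_{B^g_{r_0}}\!v^2\,\textnormal{dvol}_g.
\]

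Next, write $v=u+\varphi$ with $\varphi\in H^1_0(B^g_{r_0})$. By a standard Poincaré inequality on a Riemannian geodesic ball (valid for $r_0$ smaller than the injectivity radius, since the metric is uniformly elliptic there, cf.\ the discussion around \eqref{eq:deltag}), there exists $C_P(r_0)$ with $C_P(r_0) \to 0$ as $r_0 \to 0$, such that
\[
\int_{B^g_{r_0}}\!\varphi^2\,\textnormal{dvol}_g \le C_P(r_0)\int_{B^g_{r_0}}\!\|\nabla^g \varphi\|_g^2\,\textnormal{dvol}_g.
\]
Combining with the bound $\|\nabla^g\varphi\|_g^2 \le 2\|\nabla^g v\|_g^2 + 2\|\nabla^g u\|_g^2$ and $v^2 \le 2u^2 + 2\varphi^2$, one obtains
\[
\lambda(m)\int_{B^g_{r_0}}\!v^2\,\textnormal{dvol}_g
\;\le\; 4\lambda(m)C_P(r_0)\int_{B^g_{r_0}}\!\|\nabla^g v\|_g^2\,\textnormal{dvol}_g + C_1(u),
\]
where $C_1(u)$ gathers terms depending only on $u$ and $\lambda(m)$ (namely $2\lambda(m)\|u\|_{L^2}^2$ plus $4\lambda(m)C_P(r_0)\|\nabla^g u\|_{L^2(B^g_{r_0})}^2$, the latter being bounded by the global $H^1$ norm of $u$).

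Finally, choose $r_0$ so small that $4\lambda(m)C_P(r_0)\le \tfrac{1}{2}$. Substituting into $I(v)$ yields
\[
I(v) \;\ge\; \tfrac12\int_{B^g_{r_0}}\!\|\nabla^g v\|_g^2\,\textnormal{dvol}_g - C_1(u),
\]
which together with the (constant) outside contribution gives the claimed inequality with a suitable $C=C(u)$. The only subtlety is choosing $r_0$ uniformly small enough to swallow the factor $\lambda(m)C_P(r_0)$; since $C_P(r_0)=O(r_0^2)$ on geodesic balls with uniformly elliptic metric, this is not an obstacle.
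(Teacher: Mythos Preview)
Your proposal is correct and follows essentially the same approach as the paper's proof: both apply the Poincar\'e inequality on $B^g_{r_0}$ to $\varphi=v-u\in H^1_0(B^g_{r_0})$, use the elementary inequality $(a+b)^2\le 2a^2+2b^2$ to pass between $u$, $v$, and $\varphi$, and then take $r_0$ small so that the Poincar\'e constant (written in the paper as $1/\lambda_1(B^g_{r_0})$, your $C_P(r_0)$) makes the coefficient in front of $\int_{B^g_{r_0}}\|\nabla^g v\|_g^2$ at most $\tfrac12$. The only cosmetic difference is that you split inside/outside first, whereas the paper works globally and drops the nonnegative term $\int_{M\setminus B^g_{r_0}}\|\nabla^g u\|_g^2$ at the end.
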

\begin{proof}
Let $v\in\F$; since $u-v\in H^1_0(B^{g}_{r_0})$
\begin{eqnarray*}
J(v) &\geq& \int_M \|\nabla^{g} v\|_g^2 \,  \textnormal{dvol}_{g} -
\lambda(m)\|v\|_{L^2(M)}^2\geq \int_M \|\nabla^{g} v\|_g^2 \,  \textnormal{dvol}_{g} -
2\lambda(m)\left(\|u-v\|_{L^2(B^{g}_{{r_0}})}^2+\|u\|_{L^2(M)}^2\right)\\
&\geq&\int_M \|\nabla^{g} v\|_g^2 \,  \textnormal{dvol}_{g} -
2\lambda(m)\left(\frac{\|\nabla^{g}(u-v)\|_{L^2(B^{g}_{{r_0}})}^2}{\lambda_{1}(B^{g}_{{r_0}})}+\frac{\|\nabla^{g} u\|_{L^2(M)}^2}{\lambda_{ m}}\right)\\
&\geq&\int_M \|\nabla^{g} v\|_g^2 \,  \textnormal{dvol}_{g} -
4\lambda(m)\left(\frac{\|\nabla^{g} u\|_{L^2(B^{g}_{{r_0}})}^2+\|\nabla^{g} v\|_{L^2(B^{g}_{{r_0}})}^2}{\lambda_{1}(B^{g}_{{r_0}})}\right)-2\|\nabla^{g} u\|_{L^2(M)}^2\\
&\geq&\left(\int_{B^{g}_{{r_0}}} \|\nabla^{g} v\|_g^2 \,  \textnormal{dvol}_{g}\right)\left(1-4\frac{\lambda_{ m}}{\lambda_1(B^{g}_{r_0})}\right) -2\|\nabla^{g} u\|^2_{L^2(M)}\left(1+2\frac{\lambda_{ m}}{\lambda_{1}(B^{g}_{{r_0}})}\right)
\end{eqnarray*}
which gives the result if $\lambda_{1}(B^{g}_{{r_0}})$ is large enough, which is true if ${r_0}$ is small enough.
\qed
\end{proof}
\begin{remark}\label{jbound} This lemma implies that $J$ is bounded from below on $\F$, and moreover that if $v_n\in\F$ is a sequence such that $J(v_n)$ is bounded,
then $\|\nabla^{g} v_n\|_{L^2(B^{g}_{r_0})}$ is also bounded.
Since $v_n=u$ outside $B^{g}_{r_0}$ we deduce that $v_n$ weakly converges up to a sub-sequence.
\end{remark}

\noindent\textbf{Sketch of proof of Proposition \ref{prop:pen}: }Let $\Lambda\geq0$ be as in
Lemma \ref{lem:euler}. The proof is divided in three steps. \\[-2mm]

\noindent\textbf{- First step: $\Lambda\le\mu_+(h)<+\infty$.}
To prove that $\mu_+(h)$ is finite, we first notice that  $0<\vol_g(\Om_u\cap B^{g}_{r_0})<\vol_g(B^{g}_{r_0})$, see \cite[Lemma 2.5]{BL09Reg} and Remark \ref{rk:sign}. 
We consider then $h\in(0,\vol_g(B^{g}_{r_0})-\vol_g(\Om_u\cap B^{g}_{r_0}))$ (and so, if
$v\in\F$ with $\vol_g(\Om_v)\le  m+h$, then $\vol_g(\Om_v\cap B^{g}_{r_0})<\vol_g(B^{g}_{r_0})$), and the optimization problem
\begin{equation*}
\min\left\{J(v)+\mu \vol_g(\Om_v), v\in\F,  m\leq \vol_g(\Om_{v})\le  m+h \right\}.
\end{equation*}
Using Remark \ref{jbound}, we have existence of a solution $v_{\mu}$. 
If $\vol_g(\Om_{v_{\mu_{0}}})=  m$ for some $\mu_{0}$ then $u$ is a solution to $(\ref{eq:mu+})$ 
with $\mu_{0}$ and therefore $\mu_{+}(h)$ is finite: we will suppose to the contrary that $\vol_g(\Om_{v_\mu})>m$ for all $\mu$.
As $v_{\mu}$ is solution to 
\[J(v_\mu) =\min \Big\{J(w)\;,\;w\in\F,  m\leq\vol_g(\Om_w)\leq \vol_g(\Om_{v_{\mu}})\Big\}, \]
we can write an Euler-Lagrange equation for $v_\mu$ with a similar proof to Lemma \ref{lem:euler} and there exists
$\Lambda_\mu\geq 0$ such that \eqref{eq:EL} is true for $v_{\mu}$ in place of $u$ and for $\Phi\in C^{\infty}_c(B^{g}_{r_0},TM)$.
 On one hand, using the optimality condition for $v_{\mu}$ we obtain that $\Lambda_\mu\geq \mu$, while on the other hand, using compactness and extracting from \eqref{eq:EL} formulas for $\Lambda$ and $\Lambda_{\mu}$ we get 
%
$\lim_{\mu_{n}\to\infty}\Lambda_{\mu_{n}}=\Lambda$ for a sequence $\mu_{n}$ going to $+\infty$. This leads to a contradiction, so  $\mu_{+}(h)$ is finite. 
%
%
To conclude this first step, we show that $\Lambda\le\mu_+(h)$, again using the optimality condition for $u$.\\[-2mm]

\noindent\textbf{- Second step: $\lim_{h\to 0}\mu_+(h)=\Lambda$. }
 We first see that $\mu_+(h)>0$ for $h>0$. Indeed, if $\mu_+(h)=0$ we can use 
\[\textrm{for every }\varphi\in C^{\infty}_0(B^{g}_{r_0})\textrm{ with }\vol_g(\{\varphi\neq 0\})<h,\; J(u)\le J(u+t\varphi), \]
which leads to
$-\Delta_{g} u=\lambda(m) u \textrm{ in } B^{g}_{r_0},$
and contradicts $\vol_g(\Om_u\cap B^{g}_{r_0})<\vol_g(B^{g}_{r_0}).$
Let $\eps>0$. Since $h\mapsto\mu_+(h)$ is non-decreasing, we just have to
see that $\mu_+(h)\le\Lambda+\eps$ for some $h>0$.
If $\Lambda>0$, we restrict to $\eps<\Lambda$ and define
$\mu_{\eps}(h):=\mu_+(h)-\eps>0$; if $\Lambda=0$, we define
$\mu_{\eps}(h)=\mu_+(h)/2>0$.
We apply the same strategy as in the first step to $v_h$ solution to
\begin{equation*}
\min\left\{J(v)+\mu_{\eps}(h)\vol_g(\Om_{v}), \;\;v\in\F, \vol_g(\Om_v)\le  m+h\right\}.
\end{equation*}
We notive first that by definition of $\mu_{+}(h)$ we have $\vol_g(\Om_{v_{h}})>m$. Denoting then $\Lambda_{h}$ the Lagrange multiplier associated to $v_{h}$, we prove as before that $\Lambda_{h}\geq \mu_{\eps}(h)$ and also that $\lim\Lambda_{h_{n}}=\Lambda$ for some sequence $h_{n}$ going to 0. This leads to $\mu_+(h)\leq\Lambda+2\eps$ for some $h$ and concludes this step.\\[-2mm]

\noindent\textbf{- Third step: $\lim_{h\to 0}\mu_-(h)=\Lambda$. }
As in the first step, we first see that $\mu_-(h)\le\Lambda$. 
As in the previous step, we study $v_h$ solutions of the following minimization problem
\begin{equation*}\label{vnmu}
\min \left\{J(w)+(\mu_-(h)+\eps)\vol_g(\Om_{w}), \;\;w\in\F,\  m-h\leq \vol_g(\Om_w)\le  m\right\}.
\end{equation*}
to deduce that $\lim_{h\to 0}\mu_-(h)=\Lambda$ and this concludes the proof. \qed
%

\subsection{Positivity of the Lagrange multiplier}\label{ssect:positivity}

\begin{prop}\label{prop:positivity}
Let $u$ a solution of \eqref{eq:pbfonc}, and $\Lambda_{u}$ given in Proposition \ref{prop:pen}. Then $\Lambda_{u}>0$.
\end{prop}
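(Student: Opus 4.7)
The plan is to suppose $\Lambda_u = 0$ and derive a contradiction, exploiting the fact that in this case $\mu_+(x_0, r_0, h) \to 0$ as $h \to 0$ by Proposition \ref{prop:pen}. The starting point is the distribution $\mu_u := \Delta_g u + \lambda(m) u$, which by Proposition \ref{prop:reg0} is a nonnegative Radon measure on $M$; it vanishes in $\Omega_u$ by interior elliptic regularity for the eigenfunction, and in the interior of $\{u = 0\}$, so $\mathrm{supp}(\mu_u) \subset \partial \Omega_u$. Moreover $\mu_u \not\equiv 0$: otherwise $u$ would be a nontrivial nonnegative eigenfunction of $-\Delta_g$ on the closed connected manifold $M$ with positive eigenvalue $\lambda(m)$, contradicting the fact that every eigenfunction with nonzero eigenvalue on a closed manifold has zero mean and thus changes sign. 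I therefore pick $x_0 \in \mathrm{supp}(\mu_u) \subset \partial \Omega_u$, so that $\mu_u(B_r^g(x_0)) > 0$ for every $r > 0$, and fix $r_0$ small enough for Proposition \ref{prop:pen} to apply.

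For $r \in (0, r_0)$ small, I construct an upward competitor. Choose a cutoff $\phi_r \in C_c^\infty(B_r^g(x_0))$ with $0 \le \phi_r \le 1$, $\phi_r \equiv 1$ on $B_{r/2}^g(x_0)$, and $\|\nabla^g \phi_r\|_g \le C/r$, and set $v_\epsilon := u + \epsilon \phi_r \in \F$ for $\epsilon > 0$ to be chosen. Since $v_\epsilon \ge u \ge 0$, one has $\vol_g(\Omega_{v_\epsilon}) - m \le \vol_g(B_r^g(x_0))$. Testing the distributional identity $\Delta_g u + \lambda(m) u = \mu_u$ against $\phi_r$ gives
\[
\int g(\nabla^g u, \nabla^g \phi_r) \, \textnormal{dvol}_g - \lambda(m) \int u\, \phi_r \, \textnormal{dvol}_g = -\int \phi_r \, d\mu_u,
\]
and a direct expansion of $J(v_\epsilon)$ yields
\[
J(v_\epsilon) - J(u) = -2\epsilon \int \phi_r \, d\mu_u + \epsilon^2 Q_r, \qquad Q_r := \int \|\nabla^g \phi_r\|_g^2 \, \textnormal{dvol}_g - \lambda(m) \int \phi_r^2 \, \textnormal{dvol}_g.
\]
For $r$ small enough that $\lambda_1(B_r^g(x_0)) > \lambda(m)$, one has $Q_r > 0$, while $\int \phi_r \, d\mu_u \ge \mu_u(B_{r/2}^g(x_0)) > 0$; minimizing over $\epsilon > 0$ gives $J(u) - J(v_{\epsilon^*}) = \bigl(\int \phi_r \, d\mu_u\bigr)^2 / Q_r > 0$.

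Applying Proposition \ref{prop:pen} with $h := \vol_g(B_r^g(x_0))$ then yields
\[
\frac{\bigl(\int \phi_r \, d\mu_u\bigr)^2}{Q_r \cdot \vol_g(B_r^g(x_0))} \le \mu_+(x_0, r_0, h),
\]
whose right-hand side tends to $0$ as $r \to 0$ under the assumption $\Lambda_u = 0$. The main obstacle is the scaling analysis of the left-hand side: with the natural cutoff one has $Q_r \lesssim r^{n-2}$ and $\vol_g(B_r^g(x_0)) \sim r^n$, so the conclusion reduces to producing an $(n-1)$-dimensional lower density estimate $\mu_u(B_r^g(x_0)) \gtrsim r^{n-1}$ along a sequence $r_n \to 0$, for a suitable choice of $x_0 \in \mathrm{supp}(\mu_u)$. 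Such a density bound is the expected consequence of the Lipschitz regularity of $u$ (Corollary \ref{cor:lip}), which forces $\mu_u$ to be controlled by a perimeter-type measure on $\partial \Omega_u$; alternatively, one can replace $\phi_r$ by a capacitary-type test function adapted to $\mu_u$ near $x_0$, in the spirit of \cite[Section 4]{BL09Reg}. Once the lower bound is obtained, letting $r_n \to 0$ gives $0 < c \le \mu_+(x_0, r_0, h_n) \to 0$, a contradiction, and therefore $\Lambda_u > 0$.
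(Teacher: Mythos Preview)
Your proof has a genuine gap at precisely the point you flag as ``the main obstacle'': the lower density estimate $\mu_u(B_r^g(x_0)) \gtrsim r^{n-1}$ is not established, and it cannot be extracted from Lipschitz regularity alone. Lipschitz continuity of $u$ gives only the \emph{upper} bound $\mu_u(B_r^g(x_0)) \lesssim r^{n-1}$ (pair $\mu_u$ with your cutoff $\phi_r$ and estimate $\int g(\nabla^g u,\nabla^g\phi_r)$). A lower density bound of this type is essentially a non-degeneracy statement for $\partial_\nu u$ on $\partial\Omega_u$, and in the paper's logical order non-degeneracy (Lemma~\ref{lem:nondeg}) is proved \emph{after} the present proposition, using $\Lambda>0$ as an input; invoking it here would be circular.

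More seriously, under the very hypothesis $\Lambda_u=0$ the lower density you need actually \emph{fails}. Revisiting the proof of Lemma~\ref{lem:lip} with the sharp penalization constant $\mu_+(h)$ in place of $\mu^*$ yields, for any $x_0\in\partial\Omega_u$ and small $r$,
\[
\frac{1}{r}\fint_{\partial B_r^g(x_0)} u \le C\sqrt{\mu_+(\vol_g(B_r^g(x_0)))}\xrightarrow[r\to0]{}0,
\]
so the local Lipschitz norm of $u$ near $\partial\Omega_u$ tends to zero. This forces $\mu_u(B_r^g(x_0))=o(r^{n-1})$, which is exactly the opposite of what your argument requires.

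The paper's proof proceeds in this direction instead: from $\mu_+(h)\to0$ it shows that $\|\nabla^g u\|$ vanishes on $\partial\Omega_u$, and then uses a harmonic-extension argument for $\nabla^g u$ (introducing vector fields $H_n$ harmonic on exhausting smooth subdomains $\omega_n\Subset\Omega_u$ with $H_n=\nabla^g u$ outside) to prove that the Radon measure $\Delta_g u$ does not charge $\partial\Omega_u$. One then obtains $-\Delta_g u=\lambda(m)u$ in $\mathcal{D}'(M)$, and your final eigenfunction contradiction applies. The reference cited is \cite[Proof of Proposition~6.1]{B04Reg} (and \cite[Appendix~A]{RTV18Exi} for a variant based directly on the Euler--Lagrange identity), rather than \cite{BL09Reg}. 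Your overall architecture (reduce to $\mu_u\equiv0$, then contradict via the closed-manifold eigenfunction argument) is sound; what is missing is the mechanism that converts $\Lambda_u=0$ into smallness of $\mu_u$ near $\partial\Omega_u$, rather than largeness.
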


We follow \cite[Proof of Proposition
6.1]{B04Reg}, though we slightly simplify the presentation. For another argument, see \cite[Appendix A]{RTV18Exi} which relies only on the use of \eqref{eq:EL}.

\medskip

\begin{proof}
We argue by contradiction and suppose that $\Lambda=0$.  Our aim is to prove that under such assumption
$$-\Delta_{g} u =\lambda(m)u\;\;\;\textrm{ in the sense of distribution in }M$$
which asserts that the measure $\Delta_{g} u$ does not charge $\partial\Om_{u}$.  We introduce $(\omega_{n})_{n\in\N}$ an increasing sequence of smooth open sets such that
$$\left\{x\in\Om_{u}, d_{g}(x,\partial\Om_{u})\geq \frac{1}{n}\right\}\subset \omega_{n}\subset \overline{\omega_{n}}\subset\Om_{u},$$
so that $\bigcup_{n}\omega_{n}=\Om_{u}$.
Take $B= B^{g}_{r_0}(x)$ for some $x\in \partial \Omega_u$, and $r_0>0$.

\noindent{\bf Step 1: the gradient of $u$ vanishes near $\partial\Om_{u}$.} Let $x_{0}\in(\Om_{u}\cap B)\setminus \omega_{n}$. Denote $r$ the largest number such that $B^{g}_{r}(x_{0})\subset \Om_{u}$. We use the function $v$ defined in \eqref{eq:v1}; similarly to the proof in Lemma \ref{lem:lip}, we obtain
 from the definition of $\mu_{+}$:
\begin{equation}\label{eq:grad}
\frac{1}{\rho}\fint_{\partial B^{g}_\rho(x_{0})}u \,  \textnormal{dvol}_{g|_{\partial B_\rho^{g}(x_0)}}  \le C\sqrt{\mu_{+}(\vol_g(B^{g}_{\rho}(x_{0})))} \;\;\;\;\forall B^{g}_{\rho}(x_{0})\subset B\textrm{ such that }\vol_g(\{u=0\}\cap B^{g}_\rho(x_{0}))> 0
\end{equation}
and from the fact that $\Lambda=0$ and using Proposition \ref{prop:pen}, the right hand side converges to $0$ when $\rho\to 0$.
By definition of $r$, for every $\delta>0$, $\{u=0\}\cap B^{g}_{r+\delta}(x_{0})\neq \emptyset$, so as in Remark \ref{rk:sign} this implies that $\vol_g(\{u=0\}\cap B^{g}_{r+\delta}(x_{0}))>0$
, and therefore we can use \eqref{eq:grad} and the monotonicity of $\mu_{+}$:
$$\frac{1}{r+\delta}\fint_{\partial B^{g}_{r+\delta}(x_{0})}u \,  \textnormal{dvol}_{g|_{\partial B_{r+\delta}^{g}(x_0)}} \le  C\sqrt{\mu_{+}(\vol_g(B^{g}_{r+\delta}(x_{0})))}\leq C\sqrt{\mu_{+}(\vol_g(B^{g}_{2r}(x_{0})))} \;\;\;\;\forall \delta<r.$$
We can let $\delta$ go to 0. Then, as in the proof of Corollary \ref{cor:lip}, we obtain
$$\|\nabla^{g} u(x_{0})\|_g\leq C\left(\frac{1}{r}\fint_{\partial B^{g}_{r}(x_0)}u \,  \textnormal{dvol}_{g|_{\partial B_{r}^{g}(x_0)}} +r\right)\leq  C\left(\sqrt{\mu_{+}(\vol_g(B^{g}_{2/n}(x_{0})))}+\frac{1}{n}\right)$$
and therefore $\|\nabla^{g} u\|_{\infty,B\setminus \omega_{n}}$ converges to 0 when $n$ goes to $\infty$.

\noindent{\bf Step 2.} Thanks to Proposition \ref{prop:reg0}, we know that $\Delta_{g} u$ is a Radon measure. Let $\varphi\in C^\infty_{c}(B)$. We can write
\begin{equation}\label{eq:split}
\int_B\varphi \, \textnormal{d}(\Delta_{g} u) =\int_{B\cap \omega_{n}}\varphi \, \textnormal{d}(\Delta_{g} u)  +\int_{B\setminus\omega_{n}}\varphi \, \textnormal{d}(\Delta_{g} u)\end{equation}
The first term is equal to $\lambda(m)\int_{B\cap \omega_{n}}u\varphi$ and converges to $\lambda(m)\int_{B}u\varphi$
when $n$ goes to $\infty$. We are therefore aiming at proving that the second term in \eqref{eq:split} converges to 0. We introduce the vector valued function $H_{n}$ defined by
$$
H_{n}\in H^1(M,\R^n), \;\;\;\;\;\;
\left\{
\begin{array}{cccl}
\Delta_{g} H_{n}&=&0&\textrm{ in }\omega_{n}\\
H_{n}&=&\nabla^{g} u&\textrm{ in }\omega_{n}^c
\end{array}
\right.
$$
Then
\begin{equation}\label{eq:split2}
\int_{B\setminus \omega_{n}}\varphi \, \textnormal{d}(\Delta_{g} u)=\int_{B\setminus \omega_{n}}\varphi \, \textnormal{d}(\div_{g} H_{n}) =\int_{B}\varphi \, \textnormal{d}(\div_{g} H_{n})-\int_{B\cap \omega_{n}}\varphi \, \textnormal{d}(\div_{g} H_{n}) \end{equation}
The first term is equal to $-\int_{B}H_{n}\varphi  \,  \textnormal{dvol}_{g}$. 
 As $H_{n}$ uniformly converges to 0 on $B\cap \omega_{n}^c$ and is harmonic on $\omega_{n}$, by maximum principle $H_{n}$ converges to 0 uniformly on any compact subset of $B$ and therefore this first term converges to 0.
In order to deal with the second term, we couple the two following statements:
\begin{itemize}
\item first, we know that $\div_{g}(H_{n})$ is harmonic in $\omega_{n}$, and its trace on $\partial\omega _{n}$ is $\Delta_{g} u=-\lambda(m)u$, so
it converges uniformly to 0 on any compact set of $\Om_{u}\cap B$;
\item second, as we have
$$\left\{
\begin{array}{cccl}
\Delta_{g} (H_{n}-\nabla^{g} u)&=&\lambda(m)u&\textrm{ in }\omega_{n}\\
H_{n}-\nabla^{g} u&=&0&\textrm{ on }\partial \omega_{n}
\end{array}
\right.
$$
we get {the classical bound $\|\nabla^{g}(H_{n}-\nabla^{g} u)\|_{L^2(B\cap \omega_{n})}\leq C\lambda(m)\|u\|_{L^2(\omega_{n}\cap B)}\leq \lambda(m)$}, 
for some constant $C$, which leads to
$$\|\div_{g}(H_{n})\|_{L^2(\omega_{n}\cap B)}\leq \|\div_{g}(H_{n}-\nabla^{g} u)\|_{L^2(\omega_{n}\cap B)}+\|\Delta_{g} u\|_{L^2(\omega_{n}\cap B)}\leq (1+C)\lambda(m).$$
\end{itemize}
Combing these two statements, we deduce with the Lebesgue convergence Theorem that the last term in \eqref{eq:split2} converges to 0, which concludes the proof.
\qed
\end{proof}

\subsection{Non-degeneracy of the state function}

When we proved the Lipschitz-continuity of $u$, we obtained an estimate from above of the gradient of $u$ near the boundary of $\Om_{u}$. We are going to prove a similar estimate from below, which says that in a weak sense the gradient of $u$ cannot vanish near $\partial\Om_{u}$. To that end we use the penalization from below proven in Sections \ref{ssect:pen} and \ref{ssect:positivity}. We use a strategy from \cite[Lemma 3.4]{AC81Exi}.

\medskip

Let $u, x_{0}$ and $r_0$ as in Section \ref{ssect:pen}.

\begin{lemma}\label{lem:nondeg}
There exist $c$ such that, for every ball $B^{g}_r(x)\subset B^{g}_{r_{0}}(x_0)$ with $r$ small enough,
\begin{equation}
\;\|u\|_{\infty,B^{g}_r(x)} \le cr
\;\;\;\;\Longrightarrow\;\;\; u \equiv 0 \textrm{ on } B^{g}_{r/2}(x)\,.
\end{equation}
\end{lemma}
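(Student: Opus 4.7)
The approach is to combine the penalization from below proven in Propositions~\ref{prop:pen} and~\ref{prop:positivity} with a suitable comparison argument. Since $\Lambda = \Lambda_u > 0$ and $\mu_-(x_0,r_0,h) \to \Lambda$ as $h \to 0$, I would first fix $r$ small enough that $\mu_-(x_0, r_0, \vol_g(B^g_r(x))) \ge \Lambda/2$ whenever $B^g_r(x) \subset B^g_{r_0}(x_0)$.

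Next I would construct a test function $v \in \F(u,x_0,r_0)$ that equals $u$ outside $B^g_r(x)$, vanishes identically on $\overline{B^g_{r/2}(x)}$, and is the $\Delta_g$-harmonic replacement on the annulus $B^g_r(x) \setminus \overline{B^g_{r/2}(x)}$ with boundary data $u|_{\partial B^g_r(x)}$ and $0|_{\partial B^g_{r/2}(x)}$. By construction $\Omega_v \subset \Omega_u$, with $\{u>0\}\cap B^g_{r/2}(x) \subset \Omega_u \setminus \Omega_v$, and for $r$ small the volume difference $\vol_g(\Omega_u \setminus \Omega_v) \le \vol_g(B^g_r(x))$ places $v$ inside the admissibility window of~\eqref{eq:mu-}.

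The crucial quantitative step will be the energy estimate $J(v) - J(u) \le C M^2 r^{n-2}$, where $M := \|u\|_{\infty, B^g_r(x)}$. This follows by comparing $v$ (which minimizes Dirichlet energy in the annulus) with an explicit radial Lipschitz interpolant between $0$ on $\partial B^g_{r/2}(x)$ and $M$ on $\partial B^g_r(x)$, whose gradient has squared $L^2$-norm at most $CM^2 r^{n-2}$; the lower-order $L^2$ term $\lambda(m) \int_{B^g_r(x)} u^2 \le C M^2 r^n$ is absorbed for $r$ small. Inserting into the penalization~\eqref{eq:mu-} then yields
\[
\vol_g\bigl(\{u>0\}\cap B^g_{r/2}(x)\bigr) \le \frac{2}{\Lambda}\bigl(J(v)-J(u)\bigr) \le \frac{C'}{\Lambda}\, M^2 r^{n-2}.
\]
Under the hypothesis $M \le c r$ this becomes $\vol_g\bigl(\{u>0\}\cap B^g_{r/2}(x)\bigr) \le (C''/\Lambda)\, c^2 r^n$.

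The main obstacle, and what will dictate the smallness condition on $c$, is to pass from this (small but positive) volume bound to the pointwise conclusion $u \equiv 0$. The idea is to couple it with the Lipschitz continuity of $u$ from Corollary~\ref{cor:lip}: if $u(y_0)>0$ for some $y_0$ in a slightly smaller concentric ball, then Lipschitz continuity (with constant $L$) forces $\{u>0\}$ to contain a geodesic ball of radius $\sim u(y_0)/L$ around $y_0$, contributing a lower volume bound of order $(u(y_0)/L)^n$. Matching this with the upper bound produces an improved pointwise estimate $u(y_0) \le C''' L\, c^{2/n}\, r$, which together with the mean value inequality of Corollary~\ref{cor:continuity} can be iterated on successively smaller geodesic balls. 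Choosing $c$ small enough to close this iteration drives $u(y_0)$ to zero, yielding the desired contradiction.
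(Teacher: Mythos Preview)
There is a genuine gap in the construction of the competitor. Your $v$ is the $\Delta_g$-harmonic function on the annulus with boundary data $u$ on $\partial B^g_r(x)$ and $0$ on $\partial B^g_{r/2}(x)$. Unless $u\equiv 0$ on $\partial B^g_r(x)$ (the trivial case), the strong maximum principle forces $v>0$ throughout the \emph{entire} open annulus. Thus $\Omega_v$ contains the whole annulus, whereas $\Omega_u$ need not; the inclusion $\Omega_v\subset\Omega_u$ you assert is false, and $\vol_g(\Omega_v)$ can exceed $m$. Consequently $v$ is not admissible for the lower penalization~\eqref{eq:mu-}, and the chain of inequalities leading to your volume bound on $\{u>0\}\cap B^g_{r/2}(x)$ breaks at the first step. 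A related issue is your energy comparison: the radial interpolant you invoke takes the value $M$ on $\partial B^g_r(x)$, not $u$, so it is not a competitor for the harmonic replacement $v$ and does not bound $\int_{\text{annulus}}\|\nabla^g v\|_g^2$ in the way you claim.

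The paper repairs both problems at once by taking $v=\min\{u,w\}$, where $w$ solves $-\Delta_g w=\beta$ on the annulus with $w=\|u\|_{\infty,B^g_r(x)}$ on $\partial B^g_r(x)$ and $w=0$ on $B^g_{r/2}(x)$. The pointwise inequality $v\le u$ is then automatic, so $\Omega_v\subset\Omega_u$ and~\eqref{eq:mu-} applies cleanly. The energy difference is expanded and, after choosing $\beta=\lambda(m)\|u\|_{\infty,B^g_r(x)}$ to cancel the lower-order contributions, one arrives at an inequality of the form
\[
E(u,r)\;\le\;\Bigl(\tfrac{\|u\|_{\infty,B^g_r(x)}}{r}\Bigr)\cdot C(\Lambda,r_0)\cdot E(u,r),
\qquad
E(u,r):=\int_{B^g_{r/2}(x)}\|\nabla^g u\|_g^2+\mu_-(h)\,\vol_g(\Omega_u\cap B^g_{r/2}(x)).
\]
If $\|u\|_{\infty,B^g_r(x)}/r$ is small enough this forces $E(u,r)=0$, hence $u\equiv 0$ on $B^g_{r/2}(x)$ directly --- no iteration via the Lipschitz bound is needed.
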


The strategy here is the opposite from Section \ref{ssect:lip}, in the sense that we will consider a test function who is vanishing in a small ball. We will therefore use the penalization from below that has been proven in Sections \ref{ssect:pen} and \ref{ssect:positivity}.
Compare to \cite{AC81Exi} and \cite{BL09Reg}, the statement deals with the $L^\infty$-norm instead of the $L^1$-average as in Section \ref{ssect:lip}. This looks like a weaker result, but before proving Lemma \ref{lem:nondeg}, we show in the next statement that this other version is a classical corollary. This was noticed already in \cite{ACF84Afr} where they state a result with $L^p$-average; even though it is likely possible to directly obtain the following statement in our framework, we felt that the construction of the test function to obtain Lemma \ref{lem:nondeg} is slightly easier, see also \cite{RTV18Exi}.

\begin{corollary}\label{cor:nondeg}
There exists $c$ such that, for every geodesic ball $B^{g}_r(x)\subset B^{g}_{r_{0}}(x_0)$ with $r$ small enough,
\begin{equation}
\;\frac{1}{r}\fint_{\partial B^{g}_r(x)} u  \,  \textnormal{dvol}_{g|_{\partial B_{r}^{g}(x)}} \le c
\;\;\;\;\Longrightarrow\;\;\; u \equiv 0 \textrm{ on } B^{g}_{r/4}(x)
\end{equation}
\end{corollary}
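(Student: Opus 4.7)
The plan is to reduce Corollary~\ref{cor:nondeg} to Lemma~\ref{lem:nondeg} by upgrading the $L^1$-average bound on $\partial B^g_r(x)$ to an $L^\infty$ bound on $B^g_{r/2}(x)$ through a harmonic replacement argument. My first step will be to define $v$ as the solution to $-\Delta_g v = \lambda(m)\,u$ in $B^g_r(x)$ with $v = u$ on $\partial B^g_r(x)$. From Proposition~\ref{prop:reg0} we know $\Delta_g u + \lambda(m)\, u \geq 0$; since $u - v$ then vanishes on $\partial B^g_r(x)$ and is subharmonic in $B^g_r(x)$, the maximum principle gives $u \leq v$ in $B^g_r(x)$, so it suffices to bound $v$ from above on $B^g_{r/2}(x)$.

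Next I would split $v = h + w$, where $h$ is the $g$-harmonic extension of $u|_{\partial B^g_r(x)}$ inside $B^g_r(x)$ and $w$ solves $-\Delta_g w = \lambda(m)\,u$ with zero boundary data. For the error term $w$, standard interior $L^\infty$ estimates combined with the global $L^\infty$ bound on $u$ (Proposition~\ref{prop:reg0}) and the rescaling argument from the end of the proof of Corollary~\ref{cor:lip} yield $\|w\|_{L^\infty(B^g_r(x))} \leq C_1\, r^2$ for some uniform constant $C_1$. For the harmonic piece $h$, which is nonnegative by the maximum principle, the idea is to apply Lemma~\ref{lem:ricci} (with $\Delta_g h \geq 0$) together with the volume-comparison manipulation already used in the proof of Corollary~\ref{cor:continuity} to obtain
\[
h(x) \;\leq\; C_2\, \fint_{\partial B^g_r(x)} u \, \textnormal{dvol}_{g|_{\partial B^g_r(x)}} \;\leq\; C_2\, c\, r,
\]
and then to apply the Harnack inequality for nonnegative $g$-harmonic functions in the uniformly elliptic ball $B^g_{r_0}(x_0)$ to bootstrap this pointwise bound into $\sup_{B^g_{r/2}(x)} h \leq C_3\, h(x) \leq C_2 C_3\, c\, r$.

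Putting the two estimates together, I will get $\|u\|_{L^\infty(B^g_{r/2}(x))} \leq \|v\|_{L^\infty(B^g_{r/2}(x))} \leq C_2 C_3\, c\, r + C_1\, r^2$. Choosing $c$ small enough and $r$ smaller than some threshold $r_1 > 0$ (both depending only on the constant $c^*$ appearing in Lemma~\ref{lem:nondeg}), this right-hand side will be dominated by $c^* \cdot (r/2)$, and Lemma~\ref{lem:nondeg} applied to the ball $B^g_{r/2}(x)$ in place of $B^g_r(x)$ will then give $u \equiv 0$ on $B^g_{r/4}(x)$, which is the desired conclusion.

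The main obstacle I expect is purely technical: making sure that the Riemannian comparison ingredients (the mean-value inequality of Lemma~\ref{lem:ricci} and the Harnack inequality for $g$-harmonic functions) produce constants $C_2, C_3$ that are \emph{uniform} over all geodesic balls $B^g_r(x) \subset B^g_{r_0}(x_0)$. This should follow from the uniform ellipticity of $\Delta_g$ on $B^g_{r_0}(x_0)$ once $r_0$ is chosen small enough, a property already exploited throughout Section~\ref{ssect:apriori} and in the proof of Lemma~\ref{lem:lip}; beyond this, the scheme is a routine Riemannian adaptation of the standard Alt-Caffarelli argument passing from $L^1$-averages to $L^\infty$-bounds.
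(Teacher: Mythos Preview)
Your proposal is correct and follows essentially the same route as the paper: bound $\|u\|_{L^\infty(B^g_{r/2}(x))}$ by $C\bigl(\fint_{\partial B^g_r(x)} u + r^2\bigr)$ via a harmonic comparison (Lemma~\ref{lem:ricci} for the mean-value inequality, then Harnack), and conclude with Lemma~\ref{lem:nondeg} on $B^g_{r/2}(x)$. The only cosmetic difference is how the lower-order term $\lambda(m)u$ is absorbed: the paper adds $\lambda(m)\|u\|_\infty\,|x|^2$ to $u$ to make it $g$-subharmonic and takes the harmonic replacement $\phi$ of this modified function (so that $u\le\widetilde u\le\phi$ directly), whereas you take the replacement $v$ solving $-\Delta_g v=\lambda(m)u$ and split $v=h+w$ into a harmonic piece and a zero-boundary PDE piece estimated by rescaling; both devices produce the same $O(r^2)$ correction and the same final inequality.
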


\begin{proof}
Let $B^{g}_r(x)\subset B^{g}_{r_{0}}(x_0)$. 
As in the proof of Corollary \ref{cor:continuity}, we introduce $w(y)=|y|^2$ in local coordinates centered at $x$, and $\widetilde{u}=u+\lambda(m)\|u\|_{\infty}w$ is subharmonic with respect to the metric $g$. Denoting $\phi$ the harmonic replacement of $\widetilde{u}$ with respect to the metric $g$, we have:
\begin{itemize}
\item from maximum principle, $0\leq u\leq \widetilde{u}\leq \phi$,
\item from Lemma \ref{lem:ricci} (choosing $k$ such that the condition on the curvature is satisfied in $B^{g}_{r_{0}}(x_0)$, and $r_0$ small enough such that the area of $S_{r}^{g_{k}}(x)$ is close enough to the area of $S_{r}^{g}(x)$ for $r<r_0$), there exists a constant $c_{1}$ independant of $x,r$ such that $\phi(x)\leq c_{1}\fint_{\partial B^{g}_r(x)} \widetilde{u}  \,  \textnormal{dvol}_{g|_{\partial B_{r}^{g}(x)}}$
\item from Harnack inequality (\cite[Theorem 8.20]{GT01Ell}, there exists $c_{2}$ also independant of $x,r$ such that $\phi\leq c_{2}\phi(x)$ in $B_{r/2}(x)$.
\end{itemize}
Therefore there exists $c'$ such that 
$$\|u\|_{\infty,B^{g}_{r/2}(x)}\leq c'\left[\fint_{\partial B^{g}_r(x)} u  \,  \textnormal{dvol}_{g|_{\partial B_{r}^{g}(x)}}+r^2\right].$$
We can therefore apply Lemma \ref{lem:nondeg} in $B^{g}_{r/2}(x)$ and obtain that $u$ vanishes on $B^{g}_{r/4}(x)$.
%
\qed \end{proof}

\noindent{\bf Proof of Lemma \ref{lem:nondeg}.}
We introduce $w\in H^1(B^{g}_r(x))$ such that:
$$\left\{\begin{array}{rcll}
-\Delta_{g} w & = & \beta &\textrm { in } B^{g}_r(x)\setminus\overline{B^{g}_{r/2}(x)}\\
w & = &\|u\|_{\infty,B^{g}_r(x)}  &\textrm{ in } \partial B^{g}_r(x) \\
w & = & 0 &\textrm{ in } B^{g}_{r/2}(x).
\end{array}
\right.$$
where $\beta\in (0,\infty)$ will be chosen later. Then we define $$v=\left\{\begin{array}{l}\min\{u,w\} \textrm{ in }B^{g}_r(x)\\u\textrm{ in }M\setminus B^{g}_r(x)\end{array}\right.\, .$$ As $u\leq w$ on $\partial B^{g}_r(x)$, $v$ has no discontinuity across $\partial B^{g}_r(x)$ and therefore $v\in H^1(M)$ can be used as a test function. Moreover, by construction 
$\Om_{v}=\Om_{u}\setminus B^{g}_{r/2}(x)$, so we can use the lower penalization given by Proposition \ref{prop:pen} with  $h=\vol_g(B^{g}_{r/2}(x))$ and obtain, since $u=v$ outside $B^{g}_r(x)$,
$$
\int_{B^{g}_r(x)}\|\nabla^{g} u\|_g^2  \,  \textnormal{dvol}_{g} -\lambda(m)\int_{B^{g}_r(x)}u^2 \,  \textnormal{dvol}_{g} + \mu_-(h) \vol_g(\Om_u\cap B^{g}_r(x))
\le$$ $$\le \int_{B^{g}_r(x)}\|\nabla^{g} v\|_g^2 \,  \textnormal{dvol}_{g}-\lambda(m)\int_{B^{g}_r(x)}v^2 \,  \textnormal{dvol}_{g}
+ \mu_-(h) \vol_g(\Om_v\cap B^{g}_r(x)),
$$
which can be written:
\begin{equation}
 \int_{B^{g}_{r/2}(x)}\|\nabla^{g} u\|_g^2 \,  \textnormal{dvol}_{g}+\mu_-(h) \vol_g(\Om_u\cap B^{g}_{r/2}(x))
\le $$ $$\le \int_{B^{g}_r(x)\setminus B^{g}_{r/2}(x)} \left(\|\nabla^{g} v\|_g^2-\|\nabla^{g} u\|_g^2\right)
+\lambda(m)\int_{B^{g}_r(x)\setminus B^{g}_{r/2}(x)}(u^2-v^2) \,  \textnormal{dvol}_{g}
+\lambda(m)\int_{B^{g}_{r/2}(x)} u^2 \,  \textnormal{dvol}_{g}
\end{equation}
The first term in the right hand side can be estimated with
\begin{eqnarray}
\int_{B^{g}_r(x)\setminus B^{g}_{r/2}(x)} \left( \|\nabla^{g} v\|_g^2-\|\nabla^{g} u\|_g^2\right)  \,  \textnormal{dvol}_{g}&\leq& 
\int_{B^{g}_r(x)\setminus B^{g}_{r/2}(x)}2g(\nabla^{g} v, \nabla^{g}(v-u) )\,  \textnormal{dvol}_{g}\\
&=&2\int_{(B^{g}_r(x)\setminus B^{g}_{r/2}(x))\cap\{u>w\}}g(\nabla^{g}(w-u), \nabla^{g} v ) \,  \textnormal{dvol}_{g}\notag\\
&\leq&-2\beta\int_{(B^{g}_r(x)\setminus B^{g}_{r/2}(x))\cap\{u>w\}}(w-u)  \,  \textnormal{dvol}_{g}\\
& & \qquad  +\int_{\partial B^{g}_{r/2}(x)\cap\{u>w\}}\partial_{n}w (w-u) \,  \left.\textnormal{dvol}_{g}\right|_{\partial B^g_{r/2}(x)}
\end{eqnarray}
where $\partial_n$ denotes the normal derivative about $\partial B^{g}_{r/2}(x)$ (with respect to the metric $g$). We deal with the two other terms with 
$$\int_{B^{g}_r(x)\setminus B^{g}_{r/2}(x)}(u^2-v^2)  \,  \textnormal{dvol}_{g} \leq 2\|u\|_{\infty,B^{g}_r(x)}\int_{(B^{g}_r(x)\setminus B^{g}_{r/2}(x))\cap\{u>w\}}(u-w) \,  \textnormal{dvol}_{g}$$ and  $$\int_{B^{g}_{r/2}(x)} u^2  \,  \textnormal{dvol}_{g}\leq  \|u\|_{\infty,B^{g}_r(x)}^2\, \vol_g(\Om_{u}\cap B^{g}_{r/2}(x)).$$
Choosing $\beta=\lambda(m)\|u\|_{\infty,B^{g}_r(x)}$ so that two terms cancel, and denoting $$E(u,r)=\int_{B^{g}_{r/2}(x)}\|\nabla^{g} u\|_g^2\,  \textnormal{dvol}_{g}+ \mu_-(h) \vol_g(\Om_u\cap B^{g}_{r/2}(x))$$ we obtain
\begin{equation}\label{eq:E1}
E(u,r)\leq \int_{\partial B^{g}_{r/2}(x)}(\partial_{n}w) u \,  \textnormal{dvol}_{g|_{\partial B^{g}_{r/2}(x)}}+\lambda(m)\|u\|_{\infty,B^{g}_r(x)}^2\, \vol_g(\Om_{u}\cap B^{g}_{r/2}(x)).
\end{equation}
Using classical elliptic regularity results (\cite[Theorem 9.11 and 9.15]{GT01Ell}) and a scaling argument as in the proof of \ref{cor:lip}, we obtain the existence of $C$ independant of $x,r$ such that
$$\|\nabla^{g} w\|_{\infty,B^{g}_r(x)\setminus B^{g}_{r/2}(x)}\leq C\left[\frac{\|w\|_{\infty, B^{g}_r(x)}}{r}+\beta r\right]= C\|u\|_{\infty,B^{g}_r(x)}\left[\frac{1}{r}+\lambda(m)r\right].$$
%
On the other hand, working in the normal geodesic coordinates $y$ centered at $x$ such that $B^{g}_{r/2}(x_0)$ is parametrized by $B_{r/2}(0)$, using the test function $\varphi(y):=|y|^2/r$ 
we get that there exist  $C,C',C''$ independant on $x,r$ such that 
\begin{eqnarray}
\int_{\partial B^{g}_{r/2}(x_0)}u  \,  \textnormal{dvol}_{g|_{\partial B^{g}_{r/2}(x_0)}}\notag& = &
C\left[\int_{B^{g}_{r/2}(x_0)} g(\nabla^{g} u,\nabla^{g} \varphi)  \,  \textnormal{dvol}_{g} +\int_{B^{g}_{r/2}(x_0)}(\Delta_{g} \varphi)u  \,  \textnormal{dvol}_{g} \right]\\
&\leq& C' \left( \int_{B^{g}_{r/2}(x_0)}\|\nabla^{g} u\|_g  \,  \textnormal{dvol}_{g} + \frac{1}{r}\int_{B^{g}_{r/2}(x_0)}u  \,  \textnormal{dvol}_{g}
\right)\notag\\
& 
 \le &
C'\Bigg[\int_{B^{g}_{r/2}(x_0)\cap\Om_{u}}\left(\frac{1}{2} \|\nabla^{g} u\|_g^2 +\frac{1}{2}\right)  \,  \textnormal{dvol}_{g}
+ \frac{\|u\|_{\infty,B^{g}_r(x_0)}}{r}\vol_g(\Om_u\cap B^{g}_{r/2}(x_0))\Bigg]
\nonumber\\
&\leq& 
C''\Bigg[\int_{B^{g}_{r/2}(x_0)}\|\nabla^{g} u\|_g^2  \,  \textnormal{dvol}_{g}
+ \left(\frac{1}{2}+\frac{\|u\|_{\infty,B^{g}_r(x_0)}}{r}\right)\vol_g(\Om_u\cap B^{g}_{r/2}(x_0))\Bigg]\label{eq:E2}
\end{eqnarray}

From Proposition \ref{prop:pen}, we can consider
 $r$ small enough such that $h=\vol_g(B^{g}_{r/2}(x_0))$ satisfies $\mu_{-}(h)\in[\Lambda/2,\Lambda]$  and therefore, using Proposition \ref{prop:positivity} asserting that $\Lambda>0$,  \eqref{eq:E1} and \eqref{eq:E2} leads to
$$E(u,r)\leq \left(\frac{\|u\|_{\infty,B^{g}_r(x_0)}}{r}\right)\left[C''(1+\lambda(m)r^2) \left[1+ \frac{2}{\Lambda}\left(\frac{1}{2}+\frac{\|u\|_{\infty,B^{g}_r(x_0)}}{r}\right)\right]+\lambda(m)\frac{2}{\Lambda} \frac{\|u\|_{\infty,B^{g}_r(x_0)}}{r} r^2\right]E(u,r)$$
Knowing that $r\leq {r_0}$, if $\frac{\|u\|_{\infty,B^{g}_r(x_0)}}{r}$ is small enough then $E(u,r)$ must vanish, and so does $u$ in $B^{g}_{r/2}(x_0)$.
 \qed

\subsection{Finite perimeter}\label{ssect:perimeter}

As in the previous section we will use the penalization from below to prove the following result, whose proof is inspired by \cite[Lemma 5.21]{RTV18Exi}:

\begin{prop}
Let $M$ be compact and $u$ solution of \eqref{eq:pbfonc}. Then $\Om_{u}$ has finite perimeter.
\end{prop}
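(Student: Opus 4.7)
The plan is to approximate $\chi_{\Omega_u}$ by the Lipschitz truncations $v_\varepsilon := \min(u/\varepsilon, 1) \in H^1(M)$, which satisfy $v_\varepsilon \to \chi_{\Omega_u}$ in $L^1(M)$ by dominated convergence, and for which
\[ \int_M \|\nabla^g v_\varepsilon\|_g \, \textnormal{dvol}_g = \frac{1}{\varepsilon} \int_{\{0<u<\varepsilon\}} \|\nabla^g u\|_g \, \textnormal{dvol}_g. \]
By the Fleming-Rishel coarea formula this coincides with $\frac{1}{\varepsilon}\int_0^\varepsilon P(\{u>t\})\,dt$. If I can show $\int_{\{0<u<\varepsilon\}} \|\nabla^g u\|_g \, \textnormal{dvol}_g \leq C\varepsilon$ for all $\varepsilon$ small, the mean-value theorem produces a sequence $t_k \to 0^+$ with $P(\{u>t_k\}) \leq C$, and the lower semi-continuity of the perimeter together with the $L^1$ convergence $\chi_{\{u>t_k\}} \to \chi_{\Omega_u}$ then gives $P(\Omega_u) \leq C < +\infty$.

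The crucial estimate therefore reduces, via Cauchy-Schwarz, to the two separate bounds
\begin{equation*}
(a) \quad \int_{\{0<u<\varepsilon\}} \|\nabla^g u\|_g^2 \, \textnormal{dvol}_g \leq C\varepsilon, \qquad (b) \quad \vol_g(\{0<u<\varepsilon\}) \leq C\varepsilon,
\end{equation*}
for $\varepsilon$ small. I would obtain (a) and (b) locally: fixing $x_0 \in \partial \Omega_u$ and a radius $r_0$ small enough for Proposition \ref{prop:pen} to apply, and picking a smooth cutoff $\eta$ with $\eta \equiv 1$ on $B^g_{r_0/2}(x_0)$, $\eta \equiv 0$ outside $B^g_{r_0}(x_0)$ and $\|\nabla^g \eta\|_g \leq C/r_0$, I would apply the inequality \eqref{eq:mu-} to the competitor $v := (u - \varepsilon \eta)^+$. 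One checks that $u - v = \min(u, \varepsilon \eta) \in H^1_0(B^g_{r_0}(x_0))$ so that $v \in \F$, and that $\Omega_v = \Omega_u \setminus D$ with $D := \{0 < u \leq \varepsilon \eta\}$, hence $h_\varepsilon := \vol_g(D) \to 0$ as $\varepsilon \to 0$ by continuity of $u$, validating the use of the penalization for small $\varepsilon$.

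The computation of $J(v) - J(u)$ splits according to $\{u > \varepsilon\eta\}$ (where $\nabla^g v = \nabla^g u - \varepsilon \nabla^g \eta$, producing a cross term bounded by $2\varepsilon\|\nabla^g u\|_{L^2}\|\nabla^g \eta\|_{L^2} = O(\varepsilon)$ via Cauchy-Schwarz, plus a quadratic contribution $\varepsilon^2 \int \|\nabla^g \eta\|_g^2 = O(\varepsilon^2)$ and an $L^2$-correction $2\lambda(m)\varepsilon \int \eta u \leq 2\lambda(m)\varepsilon\|u\|_{L^1} = O(\varepsilon)$) and the region $D$ (where one gains $-\int_D \|\nabla^g u\|_g^2 \, \textnormal{dvol}_g + \lambda(m)\int_D u^2 \, \textnormal{dvol}_g$, with $\int_D u^2 \leq \varepsilon^2 \vol_g(D)$ since $u \leq \varepsilon$ on $D$). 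Plugging into \eqref{eq:mu-} rearranged as $J(v) - J(u) \geq \mu_-(x_0, r_0, h_\varepsilon) \vol_g(D)$, and using $\mu_-(x_0, r_0, h_\varepsilon) \geq \Lambda_u/2$ for $\varepsilon$ small enough (by Propositions \ref{prop:pen} and \ref{prop:positivity}) to absorb the $\lambda(m)\varepsilon^2 \vol_g(D)$ term, I obtain
\begin{equation*}
\int_D \|\nabla^g u\|_g^2 \, \textnormal{dvol}_g + \frac{\Lambda_u}{4}\vol_g(D) \leq C_{x_0, r_0}\, \varepsilon,
\end{equation*}
which, since $D \supset \{0<u<\varepsilon\} \cap B^g_{r_0/2}(x_0)$, is the local version of (a) and (b).

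To globalize, since $M$ is compact $\partial \Omega_u$ is compact, so one extracts a finite subcover $\{B^g_{r_i/2}(x_i)\}_{i=1}^N$ of $\partial\Omega_u$. By continuity of $u$, $u$ attains a positive minimum $\delta$ on the compact subset $\overline{\Omega_u} \setminus \bigcup_i B^g_{r_i/2}(x_i)$ of $\Omega_u$ (or that set is empty), so for all $\varepsilon < \delta$ one has $\{0 < u < \varepsilon\} \subset \bigcup_i B^g_{r_i/2}(x_i)$ and summing the local estimates delivers (a) and (b). The main technical obstacle will be the clean bookkeeping of $J(v) - J(u)$ with the cutoff $\eta$, in particular verifying that the annular cross-terms involving $\nabla^g \eta$ (whose constants depend on $r_0$ but not on $\varepsilon$) are genuinely $O(\varepsilon)$ uniformly in all the balls of the finite cover, and choosing $\varepsilon$ small enough so that the bounds $h_\varepsilon < h_0$, $\mu_- \geq \Lambda_u/2$ and $\lambda(m)\varepsilon^2 \leq \Lambda_u/4$ hold simultaneously for every $i$.
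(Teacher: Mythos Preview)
Your proposal is correct and follows essentially the same approach as the paper: localize with a cutoff $\eta$, use a truncation-type competitor to exploit the lower penalization \eqref{eq:mu-} (with $\mu_-\geq\Lambda_u/2>0$ from Propositions \ref{prop:pen} and \ref{prop:positivity}), derive the local bound $\int_{\{0<u<\varepsilon\}\cap B^g_{r_0/2}(x_0)}(\|\nabla^g u\|_g^2+\mu_-)\leq C\varepsilon$, pass to $\int\|\nabla^g u\|_g$ via the arithmetic--geometric (or Cauchy--Schwarz) inequality, apply the coarea formula, and globalize by a finite cover of the compact set $\partial\Omega_u$. The only cosmetic difference is the specific competitor: the paper takes $u_t=\eta(u-t)_++(1-\eta)u$, for which the removed set is exactly $\{0<u\leq t\}\cap\{\eta=1\}$, whereas you take $(u-\varepsilon\eta)^+$, whose removed set $\{0<u\leq\varepsilon\eta\}$ is slightly curved; both yield the same local estimate after the same bookkeeping.
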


As in Remark \ref{rk:noncompact}, if $M$ is noncompact then one obtains that $\Om_{u}$ has locally finite perimeter.

\begin{proof}
As the manifold $M$ is compact, given $x_{0}\in\partial \Om_{u}$, it is enough to prove that $\Om_{u}$ has a finite perimeter inside a ball $B^{g}_{r}(x_{0})$ for which, applying Popositions \ref{prop:pen} and \ref{prop:positivity},  $\mu_{-}:=\mu_{-}(x_{0},r_{0},\vol_g(B^{g}_{r}(x_{0})))\geq \frac{\Lambda}{2}>0$. We want to apply the same test function as in the proof of Proposition \ref{prop:pen1}, but one needs to localize the argument. We introduce a cut-off function $\eta:M\to\R$ such that $\eta=1$ in $B^{g}_{r/2}(x_{0})$, $\eta=0$ outside $B^{g}_{r}(x_{0})$, $0\leq \eta\leq 1$, and $\|\nabla^{g}\eta\|_{g}\leq C/r$. Then one can consider $u_{t}=\eta(u-t)_{+}+(1-\eta)u$ for $t>0$, which is such that $\Om_{u_{t}}\subset \Om_{u}$ and  $\vol_g(\Om_{u_{t}})\geq \vol_g(\Om_{u})-\vol_g(B^{g}_{r}(x_{0}))$. Therefore from \eqref{eq:mu-} one has
\begin{equation}\label{eq:finiteperimeter}
\int_M \|\nabla^{g}u\|_g^2-\lambda(m)\int_M u^2+\mu_{-}\vol_g(\Om_{u})\leq \int_M \|\nabla^{g}u_{t}\|_g^2-\lambda(m)\int_M u_{t}^2+\mu_{-}\vol_g(\Om_{u_{t}}).
\end{equation}
We easily obtain the following estimates: there exists a constant $C$ depending on $u$ and $r_{0}$ such that for every $t\in (0,1]$,
\begin{itemize}
\item $\displaystyle{\int_M (u^2-u_{t}^2)\leq Ct}$
\item $\displaystyle{\int_{\{u>t\}}\left(\|\nabla_{g}u\|_g^2-\|\nabla_{g}u_{t}\|_g^2\right) \geq -Ct}$
\item $\displaystyle{\int_{\left[B^{g}_{r}(x_{0})\setminus B^{g}_{r}(x_{0})\right]\cap\{u< t\}}\left(\|\nabla_{g}u\|_g^2-\|\nabla_{g}u_{t}\|_g^2\right)\geq -Ct}$
\end{itemize}
Also, one has $\vol_g(\Om_{u})-\vol_g(\Om_{u_{t}})=\vol_g(\{0<u<t\}\cap B^{g}_{r/2}(x_{0}))$, therefore, \eqref{eq:finiteperimeter} now leads to
$$Ct\geq \int_{\{0<u<t\}\cap B^{g}_{r/2}(x_{0})}\left[\|\nabla_{g}u\|_g^2+\mu_{-}\right]\geq 2\sqrt{\mu_{-}}\int_{\{0<u<t\}\cap B^{g}_{r/2}(x_{0})}\|\nabla_{g}u\|_g.$$
Applying the co-area formula, we obtain as in the proof of Proposition \ref{prop:pen1} that $P(\Om_{u}, B^{g}_{r/2}(x_{0}))\leq \frac{C}{2\sqrt{\mu_{-}}}$, which ends the proof by compactness.\qed
\end{proof}

\subsection{Density estimates}\label{ssect:density}

From the previous results, we can obtain a first weak regularity result:

\begin{prop}\label{prop:density}
Let $u$ be a solution of \eqref{eq:pbfonc}. Then there exist $\delta>0$ and $r_{0}>0$ such that
$$\delta\leq \frac{\vol_g(\Om_{u}\cap B^{ g}_r(x_0))}{\vol_g(B^{ g}_r(x_0))}\leq 1-\delta, \;\;\;\textrm{ for any }x_{0}\in\partial\Om_{u}\textrm{ and any }r<r_{0}.$$
 \end{prop}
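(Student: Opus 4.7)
The plan is to establish the two inequalities separately. The lower density bound follows directly from the non-degeneracy result (Lemma \ref{lem:nondeg}) combined with the Lipschitz continuity of $u$ (Corollary \ref{cor:lip}), while the upper bound is more delicate and uses an Alt--Caffarelli-type comparison argument.

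For the lower bound, fix $x_0\in\partial\Om_u$. Since $\Om_u$ is open and $x_0\in\partial\Om_u$, we cannot have $u\equiv 0$ on $B^g_{r/2}(x_0)$, hence by Lemma \ref{lem:nondeg} applied contrapositively, $\|u\|_{L^\infty(B^g_r(x_0))}\geq c\,r$ for some uniform $c>0$ and all $r<r_0$ small enough. Picking $y\in B^g_r(x_0)$ with $u(y)\geq c\,r$ and using Lipschitz continuity with constant $L$, one has $u\geq cr/2>0$ on $B^g_{cr/(2L)}(y)\subset\Om_u$. Adjusting constants so that this ball sits inside $B^g_{2r}(x_0)$, and using that $\vol_g(B^g_\rho(y))\asymp \rho^n$ uniformly on a compact region for $\rho$ small, we conclude $\vol_g(\Om_u\cap B^g_{2r}(x_0))\gtrsim r^n\asymp \vol_g(B^g_{2r}(x_0))$.

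For the upper bound we reuse the comparison function $v\in H^1(M)$ from the proof of Lemma \ref{lem:lip}, solving $-\Delta_g v=\lambda(m)u$ in $B^g_r(x_0)$ with $v=u$ on $\partial B^g_r(x_0)$ and $v=u$ outside. By the strong maximum principle $v\geq u\geq 0$ on $B^g_r(x_0)$, and from \eqref{eq:estimate1} (which used Proposition \ref{prop:pen1}),
\[
\int_{B^g_r(x_0)}\|\nabla^g(u-v)\|_g^2\, \textnormal{dvol}_g \leq \mu^*\,\vol_g(\{u=0\}\cap B^g_r(x_0)).
\]
On the other hand, Corollary \ref{cor:nondeg} yields $\fint_{\partial B^g_r(x_0)} u\, \textnormal{dvol}_{g|_{\partial B^g_r(x_0)}}\gtrsim r$, whence (applying Harnack's inequality to the superharmonic function $v$, as in Step 2 of the proof of Lemma \ref{lem:lip}) $v\geq c'\,r$ on $B^g_{r/2}(x_0)$ for some uniform $c'>0$; in particular $|u-v|=v\geq c'\,r$ on $\{u=0\}\cap B^g_{r/2}(x_0)$. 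Plugging this into the Sobolev inequality for $u-v\in H^1_0(B^g_r(x_0))$ and applying H\"older's inequality yields a nonlinear reverse-doubling estimate of the form
\[
\vol_g(\{u=0\}\cap B^g_{r/2}(x_0))\leq C\, r^{-\frac{2n}{n-2}}\,\vol_g(\{u=0\}\cap B^g_r(x_0))^{\frac{n}{n-2}}.
\]
A standard dichotomy argument then gives either $\vol_g(\{u=0\}\cap B^g_r(x_0))/r^n\geq \delta'>0$ uniformly, or geometric decay of this ratio as $r\to 0$; the latter alternative is ruled out by the strong maximum principle applied to $-\Delta_g u=\lambda(m) u$ on $\Om_u$, which would force $u$ to be positive in a full neighborhood of $x_0$ and contradict $x_0\in\partial\Om_u$.

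The main obstacle is the upper density bound, for which one must verify that the constants in the Sobolev, Poincar\'e and Harnack inequalities can be chosen uniformly in the base point $x_0$ and in $r<r_0$. This uniformity follows from the observation recalled at the beginning of Section \ref{sect:reg} that in normal geodesic coordinates, $g$ differs from the Euclidean metric by $\mathcal{O}(r^2)$, so the classical Euclidean inequalities apply up to constants depending only on the compact region considered.
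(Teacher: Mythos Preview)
Your lower density bound is correct and matches the paper's argument exactly.

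For the upper bound you start as the paper does (the comparison function $v$, the estimate \eqref{eq:estimate1}, and the lower bound $v\geq c'r$ on a smaller ball via Corollary \ref{cor:nondeg} and Harnack), but then you take a detour through the Sobolev inequality that does not close. Your recursive estimate
\[
\vol_g(\{u=0\}\cap B^g_{r/2}(x_0))\leq C\, r^{-\frac{2n}{n-2}}\,\vol_g(\{u=0\}\cap B^g_r(x_0))^{\frac{n}{n-2}}
\]
is correct, but the dichotomy you invoke only tells you that \emph{if} the density ratio of $\{u=0\}$ drops below the threshold at some scale $r_1$, then it decays (double-exponentially) along the dyadic sequence $r_1/2^k$. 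That conclusion, namely $\vol_g(\{u=0\}\cap B^g_r(x_0))/r^n\to 0$, does \emph{not} yield $\vol_g(\{u=0\}\cap B^g_\rho(x_0))=0$ for any $\rho>0$, so the strong maximum principle cannot be invoked to force $u>0$ in a neighborhood of $x_0$. A boundary point whose complement has Lebesgue density zero is not a contradiction at this stage of the proof. In addition, the Sobolev exponent $2n/(n-2)$ makes your argument break down in dimension $n=2$.

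The paper avoids this entirely by replacing Sobolev with Poincar\'e. Since $u-v\in H^1_0(B^g_r(x_0))$, one has
\[
\int_{B^g_r(x_0)}\|\nabla^g(u-v)\|_g^2\,\textnormal{dvol}_g\;\geq\;\lambda_1(B^g_r(x_0))\int_{B^g_r(x_0)}(u-v)^2\,\textnormal{dvol}_g.
\]
The key extra observation is that on a \emph{full} small ball $B^g_{\kappa r}(x_0)$ (not just on $\{u=0\}$) one has $v\geq c_7(1-\kappa)r$ from the Harnack-type bound and $u\leq L\kappa r$ from Lipschitz continuity, so $v-u\geq c_8 r$ there for $\kappa$ small. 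Plugging this in gives directly
\[
\vol_g(\{u=0\}\cap B^g_r(x_0))\;\geq\;\frac{c_8^2}{\mu^*}\,r^2\lambda_1(B^g_r(x_0))\,\vol_g(B^g_{\kappa r}(x_0))\;\geq\;c\,r^n,
\]
using that $r^2\lambda_1(B^g_r(x_0))$ is uniformly bounded below and $\vol_g(B^g_{\kappa r}(x_0))\asymp r^n$. This yields the bound in one step, with no iteration and no dimensional restriction.
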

 
 \begin{proof}
The proof is classical, see also \cite{AC81Exi,MTV17Reg}: by Lemma \ref{lem:nondeg} asserting a non-degeneracy property for $u$, there exists $x_{r}\in B^{ g}_{r/2}(x_{0})$ such that $u(x_{r})\geq \frac{c}{4}r$ where $c$ is given in Lemma \ref{lem:nondeg}. Using now Lipschitz continuity of $u$ (Corollary \ref{cor:lip}), we get that $u>0$ on $B^{ g}_{\theta r}(x_{r})$ for some $\theta$ which does not depend on $x_{0}$; this leads to 
$$ \frac{\vol_g(\Om_{u}\cap B^{g}_{r}(x_{0}))}{\vol_g(B^g_{r}(x_{0}))}\geq \frac{\vol_g(B^{g}_{\theta r}(x_{r}))}{\vol_g(B^g_{r}(x_{0}))}. $$
hence the lower estimate.
For the upper bound, we go back to the proof of Lemma \ref{lem:lip}, where we obtained the following estimate \eqref{eq:estimate1} ($v$ being defined by \eqref{eq:v1}, and $\mu^*$ introduced in Proposition \ref{prop:pen1}):
$$ \vol_g(\{u=0\}\cap B^{ g}_r(x_0))\geq \frac{1}{\mu^*}\int_{B^{ g}_r(x_0)}\|\nabla^{ g} (u-v)\|_{ g}^2\,  \textnormal{dvol}_{ g}\,.$$
As $u=v$ on $\partial B^{ g}_r(x_0)$, we also have:
$$\int_{B^{ g}_r(x_0)}\|\nabla^{ g} (u-v)\|_{ g}^2\,  \textnormal{dvol}_{ g}\geq \lambda_{1}(B^{ g}_r(x_0))\int_{B^{ g}_r(x_0)}(u-v)^2\,  \textnormal{dvol}_{ g}\,.$$
We want to obtain a lower bound for this last term, which will rely on the fact that $u$ is small near $x_{0}$, while $v$ is large. By Lipschitz continuity, we have $u(x)\leq L\kappa r$ in $B^{ g}_{\kappa r}(x_{0})$, where $L$ is Lipschitz constant for $u$. On the other hand, we had shown that
$$v\geq c_{6}\left(\frac{1}{r}\fint_{\partial B^{ g}_r(x_0)}u \,  \textnormal{dvol}_{ g|_{\partial B^{ g}_{r}(x_0)}}\right)(r-d_{ g}(x,x_0)).$$
From Corollary \ref{cor:nondeg} one has $\left(\frac{1}{r}\fint_{\partial B^{ g}_r(x_0)}u\,  \textnormal{dvol}_{ g|_{\partial B^{ g}_{r}(x_0)}}\right)\geq c$
and therefore $v\geq c_{7}(1-\kappa)r$ on $B^{ g}_{\kappa r}(x_{0})$ for some constant $c_{7}>0$. Choosing $\kappa$ small enough, this leads to 
$v-u\geq c_{8}r$ in $B^{ g}_{\kappa r}(x_0)$ with $c_{8}>0$, which leads to
$$\vol_g(\{u=0\}\cap B^{ g}_r(x_0))\geq cr^2\lambda_{1}(B^{ g}_r(x_0))\vol_g(B^{ g}_{\kappa r}(x_0)) $$
and allows to conclude from the facts that $r^2\lambda_{1}(B^{ g}_r(x_0))$ is uniformly bounded from below for $r$ small (see \cite{Druet}), and  $\vol_g(B^{ g}_{\kappa r}(x_0))\geq c\vol_g(B^{ g}_r(x_0))$.
\qed
\end{proof}

\subsection{Weiss-monotonicity formula in a manifold}

Let $u$ be a local minimum of \eqref{eq:pbfonc} and $x_{0}\in \partial\Om_{u}$. Following \cite{W98Par}, we define, for $r$ small enough, the function
\begin{equation}\label{phiweiss}
\phi_{u,x_0}^g(r):=\frac{1}{r^{n}}\int_{B^{ g}_{r}(x_{0})} \left(\|\nabla^{ g} u\|_{ g}^2+\Lambda \mathbbm{1}_{u>0}\right) \,  \textnormal{dvol}_{ g}  -\frac{1}{r^{n+1}}\int_{\partial B^{ g}_{r}(x_{0})}u^2 \,  \textnormal{dvol}_{ g|_{\partial B_r^{ g}(x_0)}}  
\end{equation} 

We have the following:
\begin{prop}\label{prop:monotonicity}
There exists $C>0$ and $r_{0}>0$ such that for all $r<r_{0}$,
\begin{equation}\label{monot}
(\phi^g_{u,x_{0}})'(r)\geq \frac{2}{r^{n}}\int_{\partial B^{ g}_{r}(x_0)}\left(\partial_{\nu} u-\frac{u}{r}\right)^2\,  \textnormal{dvol}_{ g|_{\partial B_r^{ g}(x_0)}} {-Cr} \,.
\end{equation}
\end{prop}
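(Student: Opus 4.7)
The plan is to adapt the classical Weiss monotonicity formula \cite{W99Par} to the Riemannian setting; the non-flatness of $g$ will contribute error terms of order $r^2$ in the integral identities, which after dividing by $r^{n+1}$ produce the $-Cr$ in \eqref{monot}.

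First I would use the Euler--Lagrange equation \eqref{eq:EL} applied to a radial cut-off vector field. In the normal geodesic coordinates at $x_0$, take
\[
\Phi_\delta(x) = \eta_\delta(|x|)\,\Theta(x),
\]
where $\Theta$ is as in \eqref{eq:theta} and $\eta_\delta:\R_{+}\to[0,1]$ is a smooth non-increasing approximation of $\mathbbm{1}_{[0,r]}$, extended by zero outside the chart. Using the expansions of $g_{ij}$ and $\log |g|$ recalled in Section~\ref{sect:reg} (together with $\Gamma^i_{jk}=\mathcal{O}(|x|)$), a direct computation yields
\[
\div_g \Phi_\delta = n\,\eta_\delta(|x|) + |x|\,\eta_\delta'(|x|) + \eta_\delta(|x|)\,\mathcal{O}(|x|^2),
\]
\[
g(D\Phi_\delta\,\nabla^g u,\nabla^g u) = \eta_\delta\,\|\nabla^g u\|_g^2 + |x|\,\eta_\delta'(|x|)\,(\partial_r u)^2 + \eta_\delta\,\mathcal{O}(|x|^2)\|\nabla^g u\|_g^2.
\]
Passing to the limit $\delta \to 0$ in \eqref{eq:EL} (so that $|x|\,\eta_\delta'(|x|)$ converges to the surface measure $-r\,\delta_{\partial B^g_r(x_0)}$), and rearranging, yields
\begin{equation}\label{eq:plan-EL}
(2-n)A(r) + n\lambda(m)\!\!\int_{B^g_r(x_{0})}\!\!u^2 + rA'(r) - 2r\!\!\int_{\partial B^g_r(x_{0})}\!\!(\partial_\nu u)^2 - r\lambda(m)\!\!\int_{\partial B^g_r(x_{0})}\!\!u^2 + \Lambda\bigl[nV(r)-rV'(r)\bigr] = R_1(r),
\end{equation}
where $A(r):=\int_{B^g_r(x_{0})}\|\nabla^g u\|_g^2$, $V(r):=\vol_g(\Omega_u\cap B^g_r(x_{0}))$, and $|R_1(r)|\le Cr^2[A(r)+V(r)]$.

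Next I compute $\phi^g_{u,x_0}{}'(r)$ directly. Writing $\phi^g_{u,x_0}(r)= r^{-n}[A(r)+\Lambda V(r)] - r^{-n-1}B(r)$ with $B(r):=\int_{\partial B^g_r(x_{0})}u^2$, the Hadamard derivation formula together with the expansion $H_{\partial B^g_r(x_{0})} = (n-1)/r + \mathcal{O}(r)$ for the mean curvature of the geodesic sphere gives
\[
B'(r) = \frac{n-1}{r}B(r) + 2\!\!\int_{\partial B^g_r(x_{0})}\!\!u\,\partial_\nu u + \mathcal{O}(r)\,B(r).
\]
Substituting \eqref{eq:plan-EL} to eliminate $\Lambda[-nV(r)+rV'(r)]$, and using the exact Rellich-type identity
\[
A(r) = \lambda(m)\!\!\int_{B^g_r(x_{0})}\!\!u^2 + \!\!\int_{\partial B^g_r(x_{0})}\!\!u\,\partial_\nu u
\]
(obtained by multiplying $-\Delta_g u=\lambda(m)u$ by $u\mathbbm{1}_{B^g_r(x_{0})}$ and integrating by parts, using that $u=0$ quasi-everywhere on $\partial\Omega_u$), all the non-eigenvalue contributions telescope into $\tfrac{2}{r^n}\int_{\partial B^g_r(x_{0})}(\partial_\nu u-u/r)^2$, producing
\[
(\phi^g_{u,x_0})'(r) = \frac{2}{r^n}\!\!\int_{\partial B^g_r(x_{0})}\!\!(\partial_\nu u - u/r)^2 + \frac{\lambda(m)}{r^{n+1}}\!\left[r\!\!\int_{\partial B^g_r(x_{0})}\!\!u^2 - (n+2)\!\!\int_{B^g_r(x_{0})}\!\!u^2\right] + R_2(r),
\]
with $|R_2(r)|\le Cr$.

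To conclude, since $x_0\in\partial\Omega_u$ and $u$ is Lipschitz with $u(x_0)=0$ by Corollary \ref{cor:lip}, we have $\|u\|_{L^\infty(B^g_r(x_{0}))}\le Lr$, hence $\int_{B^g_r(x_{0})}u^2\le Cr^{n+2}$ and $\int_{\partial B^g_r(x_{0})}u^2 \le Cr^{n+1}$, so the bracketed eigenvalue term is of size $\mathcal{O}(r)$. Combined with $R_2$, this yields the claim. The main obstacle is the careful bookkeeping of the Riemannian correction terms: each is of the form $\mathcal{O}(r^2)$ times a natural geometric quantity ($A(r)$, $V(r)$, or $B(r)/r$), all of which are $\mathcal{O}(r^n)$ by the Lipschitz regularity of $u$ and the density bounds of Proposition~\ref{prop:density}, so dividing by $r^{n+1}$ yields at most an $\mathcal{O}(r)$ contribution, matching the stated lower bound.
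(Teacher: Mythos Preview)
Your proposal is correct and follows essentially the same approach as the paper's proof: both use the Euler--Lagrange identity \eqref{eq:EL} with the radial vector field $\Theta(x)$ cut off near $\partial B^g_r(x_0)$, compute $(\phi^g_{u,x_0})'$ via the Hadamard formula together with the expansion $H_{\partial B^g_r}=(n-1)/r+\mathcal{O}(r)$, invoke the Green identity $\int_{B^g_r}\|\nabla^g u\|_g^2=\lambda(m)\int_{B^g_r}u^2+\int_{\partial B^g_r}u\,\partial_\nu u$, and conclude by bounding the $\lambda(m)$ and curvature correction terms using $u(x_0)=0$ and the Lipschitz estimate of Corollary~\ref{cor:lip}. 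The only difference is organizational (you derive the EL identity first and then differentiate $\phi$, whereas the paper reverses the order and absorbs $\Lambda V(r)$ into its definition of $A(r)$); note also that the density estimates of Proposition~\ref{prop:density} are not actually needed for the error bounds, since the trivial inequalities $V(r)\le Cr^n$ and $A(r)\le L^2 Cr^n$ already suffice.
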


We give immediately the following corollary of Proposition \ref{prop:monotonicity}, which will be fundamental in the blow-up procedure of the following subsection:

\begin{corollary}\label{limit-weiss}
The limit $\displaystyle \lim_{r \to 0^+} \phi_{u,x_0}^g(r)$ exists and is finite.
\end{corollary}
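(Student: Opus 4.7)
\textbf{Proof plan for Corollary \ref{limit-weiss}.} The plan is to combine the almost-monotonicity provided by Proposition \ref{prop:monotonicity} with a uniform two-sided bound for $\phi_{u,x_0}^g(r)$ on a small interval $(0,r_0)$.

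First, since the right-hand side of \eqref{monot} satisfies
\[
\frac{2}{r^{n}}\int_{\partial B^{g}_{r}(x_0)}\left(\partial_{\nu} u-\frac{u}{r}\right)^2 \textnormal{dvol}_{g|_{\partial B_r^{g}(x_0)}}-Cr\ \geq\ -Cr,
\]
we obtain that the auxiliary function
\[
\psi(r):=\phi_{u,x_0}^g(r)+\tfrac{C}{2}r^2
\]
has a nonnegative derivative on $(0,r_0)$, hence is nondecreasing. Consequently $\lim_{r\to 0^+}\psi(r)$ exists in $[-\infty,+\infty)$, and it suffices to show this limit is finite, which reduces to proving that $\phi_{u,x_0}^g$ is bounded below (in fact bounded) near $0$.

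For this, I would use that by Corollary \ref{cor:lip} the eigenfunction $u$ is Lipschitz on $M$ with some constant $L$, and that $x_0\in\partial\Omega_u$ combined with the density estimate of Proposition \ref{prop:density} together with the upper semicontinuous representative of Corollary \ref{cor:continuity} ensures $u(x_0)=0$ (as $\{u=0\}$ has positive density at $x_0$ in every small ball). Then $|u(x)|\leq L\, d_g(x,x_0)$, and in particular $u^2\leq L^2 r^2$ on $\partial B^g_r(x_0)$ and $\|\nabla^g u\|_g^2\leq L^2$ almost everywhere. Using in addition that $\vol_g(B_r^g(x_0))\leq C_0 r^n$ and $\vol_g(\partial B_r^g(x_0))\leq C_0 r^{n-1}$ for $r\leq r_0$ (uniform Riemannian volume comparison in a chart), we obtain
\[
\left|\frac{1}{r^{n}}\int_{B^{g}_{r}(x_{0})} \bigl(\|\nabla^{g} u\|_{g}^2+\Lambda \mathbbm{1}_{u>0}\bigr)\textnormal{dvol}_g\right|\leq C_0(L^2+\Lambda),
\]
\[
\left|\frac{1}{r^{n+1}}\int_{\partial B^{g}_{r}(x_{0})}u^2\, \textnormal{dvol}_{g|_{\partial B_r^{g}(x_0)}}\right|\leq C_0 L^2 r,
\]
so $\phi_{u,x_0}^g$ is bounded on $(0,r_0)$. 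It follows that $\psi$ is bounded, and being monotone it admits a finite limit as $r\to 0^+$; subtracting $\tfrac{C}{2}r^2$ shows that $\lim_{r\to0^+}\phi_{u,x_0}^g(r)$ exists and is finite.

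There is no real obstacle here, since both ingredients have already been established earlier in the paper: the almost-monotonicity comes from Proposition \ref{prop:monotonicity}, and the boundedness reduces to Lipschitz continuity (Corollary \ref{cor:lip}) together with $u(x_0)=0$, which is guaranteed by the density estimates of Proposition \ref{prop:density} applied to the upper semicontinuous representative.
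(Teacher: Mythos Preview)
Your proof is correct and follows essentially the same approach as the paper: boundedness of $\phi_{u,x_0}^g$ near $0$ from Lipschitz continuity of $u$ together with $u(x_0)=0$, combined with the monotonicity of $r\mapsto \phi_{u,x_0}^g(r)+\tfrac{C}{2}r^2$ coming from Proposition~\ref{prop:monotonicity}. One small simplification: you do not need Proposition~\ref{prop:density} to get $u(x_0)=0$; since $u$ is continuous (Corollary~\ref{cor:lip}) and nonnegative, $x_0\in\partial\{u>0\}$ already forces $u(x_0)=0$.
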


\begin{proof} It suffices to observe firstly that the function $\phi_{u,x_0}^g(r)$ is bounded for $r$ small because $u$ is Lipschitz continuous (\ref{cor:lip}), and secondly that the function
\[
r \to \phi_{u,x_0}^g(r) + \frac{C}{2}\,r^2\, ,
\]
is monotone nondecreasing by Proposition \ref{prop:monotonicity}.
\qed
\end{proof}

\medskip

We turn now to the proof of Proposition \ref{prop:monotonicity}. We cannot proceed as in \cite{MTV17Reg}, as if we use optimality, one cannot control the penalization term with enough precision. Therefore, we use the approach from \cite{W98Par} using only the Euler-Lagrange equation from Lemma \ref{lem:euler}, which is available with volume constraint. See also \cite{RTV18Exi} for a similar strategy. 

\begin{remark}\label{rk:weiss}
In \cite{W98Par}, it is proven, when $M$ is replaced by $D$ a bounded domain in $\R^n$ and when $g$ is the euclidian metric, that 
\begin{equation}\label{eq:monotonyAC}
(\phi^e_{u,0})'(r)\, =\, \frac{2}{r^{n}}\int_{\partial B_{r}(0)}\left(\partial_{r} u-\frac{u}{r}\right)^2 \, \textnormal{dvol}_{e} \,,
\end{equation}
if $u$ is such that
\begin{equation}\label{eq:ELAC}\forall \Phi\in C^{\infty}_{c}(D,\R^n),\;\;
\int_{D} \Big[2\, (D\Phi\nabla^{e} u)\cdot \nabla^{e} u-  |\nabla^{e} u|^2 \, \div \Phi\Big]\textnormal{dvol}_{e}= \Lambda\int_{\Om_u\cap D}\div \Phi\ \,\textnormal{dvol}_{e}. 
\end{equation}
which is the Euler-Lagrange equation (similarly to Lemma \ref{lem:euler}) for the minimization of the Alt-Caffarelli functional $\displaystyle{u\mapsto \int_{D}|\nabla^eu|^2\textnormal{dvol}_{e}+\Lambda\vol_{e}(\Om_{u}\cap D)}$. We will use this result when studying blow-up limits in the next section.
\end{remark}

\noindent{\bf Proof of Proposition \ref{prop:monotonicity}.} We proceed in several steps.

\medskip

{\bf First step.} In this first step we just compute formally the derivative of the function $\phi^g_{u,x_0}$ with respect to $r$, without using the optimality of $u$. If we denote 
 $$A(r)=\int_{B^{ g}_{r}(x_{0})} \left(\|\nabla^{ g} u\|_{ g}^2+\Lambda \mathbbm{1}_{u>0}\right) \,  \textnormal{dvol}_{ g}\,, $$ 
we classically have
$$A'(r)=\int_{\partial B^{ g}_{r}(x_0)}\left(\|\nabla^{ g} u\|_{ g}^2+\Lambda\mathbbm{1}_{u>0} \right)\,  \textnormal{dvol}_{ g|_{\partial B_r^{ g}(x_0)}}\,, $$
and then we can write:
\begin{equation}\label{derphi}
\phi'_{u,x_{0}}(r)=-\frac{n}{r^{n+1}}A(r)+\frac{1}{r^{n}}A'(r)-\frac{d}{dr}\left[\frac{1}{r^{n+1}}\int_{\partial B^{ g}_{r}(x_{0})}u^2 \,  \textnormal{dvol}_{ g|_{\partial B_r^{ g}(x_0)}}  \right]\,.
\end{equation}
Let us compute explicitly the third terms of \eqref{derphi}:
$$\frac{d}{dr}\left[\frac{1}{r^{n+1}}\int_{\partial B^{ g}_{r}(x_0)}u^2 \,  \textnormal{dvol}_{ g|_{\partial B_r^{g}(x_0)}} \right]=
-\frac{n+1}{r^{n+2}}\int_{\partial B_{r}^g(x_0)}u^2\,  \textnormal{dvol}_{ g|_{\partial B_r^{g}(x_0)}}+\frac{1}{r^{n+1}}\frac{d}{dr}\left[\int_{\partial B^g_{r}(x_0)}u^2\,  \textnormal{dvol}_{ g|_{\partial B_r^{g}(x_0)}}\right]$$ 
and we have
\begin{eqnarray*}
\frac{d}{dr}\left[\int_{\partial B^g_{r}(x_0)}u^2\,  \textnormal{dvol}_{ g|_{\partial B_r^{g}(x_0)}}\right]&=&\int_{\partial B^g_{r}(x_0)} g(\nabla(u^2),\nu) \,  \textnormal{dvol}_{ g|_{\partial B_r^{g}(x_0)}}+\int_{\partial B^g_{r}(x_{0})}u^2\, H\,  \textnormal{dvol}_{ g|_{\partial B_r^{g}(x_0)}}\\
& = & \int_{\partial B^g_{r}(x_0)} 2u\partial_\nu u \,  \textnormal{dvol}_{ g|_{\partial B_r^{g}(x_0)}}+\int_{\partial B^g_{r}(x_{0})}u^2\, H\,  \textnormal{dvol}_{ g|_{\partial B_r^{g}(x_0)}}
\end{eqnarray*}
where $\nu$ and $H$ denote the outer normal vector to $\partial B_{r}^g(x_{0})$ and the mean curvature (sum of the principal curvatures) respectively, and $\partial_\nu$ denotes the normal derivative with respect to the metric $g$. In conclusion, the third term of the second member of \eqref{derphi} is given by
$$
\frac{d}{dr}\left[\frac{1}{r^{n+1}}\int_{\partial B^g_{r}(x_{0})}u^2 \,  \textnormal{dvol}_{ g|_{\partial B_r^{g}(x_0)}} \right]=\frac{1}{r^{n+1}}\int_{\partial B_{r}^g(x_0)}\left[\left(H-\frac{n+1}{r}\right)u^2+2u\partial_{\nu} u\right] \,  \textnormal{dvol}_{ g|_{\partial B_r^{g}(x_0)}}\,.$$
Finally we obtain:
\begin{equation}
\phi_{u,x_0}'(r)=\frac{1}{r^{n+1}}\left\{-n A(r)+r A'(r)-\int_{\partial B^{ g}_{r}(x_0)}\left[\left(H-\frac{n+1}{r}\right)u^2+2u\partial_{\nu} u\right] \,  \textnormal{dvol}_{ g|_{\partial B_r^{g}(x_0)}}\right\}\,.
\end{equation}
Using the fact that on $\partial B_r^g(x_0)$ the mean curvature is
\[
H = \frac{n-1}{r} + \mathcal{O}(r)
\]
(where the error term $\mathcal{O}(r)$ is smooth in $r$) we obtain:
\begin{equation}
\phi_{u,x_0}'(r)=\frac{1}{r^{n+1}}\left\{-n A(r)+r A'(r)-\int_{\partial B^{ g}_{r}(x_0)}\left[2u\partial_{\nu}u+\left(-\frac{2}{r} + \mathcal{O}(r) \right) \,u^2\right]\textnormal{dvol}_{ g|_{\partial B_r^{ g}(x_0)}}\right\}
\end{equation}

\medskip

{\bf Second step: } We want now to compute the main order of the term
\[
-n A(r)+r A'(r)\,,
\]
and in this computation we will use the optimality condition for the function $u$. We claim that
\begin{multline}\label{nj}
-n A(r)+r A'(r) = (1+\mathcal{O}(r))(n+2)\lambda(m)\int_{B^{ g}_{r}(x_0)}  u^2 \textnormal{dvol}_{ g} +\\
+ (1+\mathcal{O}(r)) \int_{\partial B^{ g}_{r}(x_0)} \left[2u\partial_\nu u -2r(\partial_{\nu}u)^2 -\lambda(m) r u^2\right]\textnormal{dvol}_{ g|_{\partial B_r^{ g}(x_0)}}\,,
\end{multline}
where the error term $\mathcal{O}(r)$ is smooth in $r$. In order to prove the claim, let $x = (x_1, ..., x_n)$ be normal geodesic coordinates aroung $x_0$ as in \eqref{eq:theta}. Let $\Phi_{\eps}$ be an approximation of 
\[
\Phi(\textnormal{exp}_{x_0}\Theta (x)) =\Theta(x) \mathbbm{1}_{B^{ g}_{r}(x_0)}\,.
\]
More precisely, given $\eps>0$, we consider 
\[
\Phi_{\eps}(\textnormal{exp}_{x_0}\Theta (x)) =\rho_{\eps}(\textnormal{exp}_{x_0}\Theta (x))\, \Theta(x)
\] 
where $\rho_{\eps}(\textnormal{exp}_{x_0}\Theta (x))=\varphi_{\eps}(d(\textnormal{exp}_{x_0}\Theta (x), x_0))$ is smooth and equal to 1 in $B_{r}^g(x_{0})$, vanishes outside $B_{r+\eps}^g(x_{0})$, and such that $\varphi_{\eps}$ is decreasing on $[r,r+\eps]$. Using Lemma \ref{lem:euler} we obtain
\begin{multline*}
\int_{M}\left[ 2\rho_{\eps}\|\nabla^{g} u\|_{g}^2+\big(\lambda(m)u^2-\|\nabla^{g} u\|_{g}^2\big)\, \textnormal{div}_g \Theta(x)\, \rho_{\eps}+2g((\Theta(x)\otimes\nabla^g\rho_{\eps})\nabla^g u,\nabla^g u)+ \right.\\+ \left.\big(\lambda(m)u^2-\|\nabla^{g} u\|_{g}^2\big)g(\Theta(x),\nabla^g \rho_{\eps})\right] \,  \textnormal{dvol}_{ g} =\Lambda\int_{\Om_{u}} \left( \textnormal{div}_g \Theta(x)\, \rho_{\eps}+g(\Theta(x),\nabla^g \rho_{\eps})\right)\,  \textnormal{dvol}_{g}
\end{multline*}
Passing to the limit $\eps\to 0$, we obtain: 
\begin{multline}\label{multi}
\int_{B^g_{r}(x_0)}\left[2\|\nabla^{g} u\|_{g}^2+ \textnormal{div}_g \Theta(x)\, \big(\lambda(m)u^2-\|\nabla^{g} u\|_{g}^2\big)\right]  \,  \textnormal{dvol}_{ g}
+\\-(r+\mathcal{O}(r^2)) \int_{\partial B^g_{r}(x_0)}\Big[2(\partial_{\nu}u)^2+\lambda(m)u^2-\|\nabla^{g} u\|_{g}^2\Big] \,  \textnormal{dvol}_{ g|_{\partial B_r^{g}(x_0)}}=\\=\Lambda \left(\int_{B_{r}^g(x_0)}  \textnormal{div}_g \Theta(x)\, \mathbbm{1}_{u>0} \,  \textnormal{dvol}_{ g}-(r+\mathcal{O}(r^2))\int_{\partial B^g_{r}(x_0)}\mathbbm{1}_{u>0}\,  \textnormal{dvol}_{ g|_{\partial B_r^{g}(x_0)}}\right)\,.
\end{multline}
In fact, in order to justify the previous passage, we use the fact that, since $r$ is small, the metric $g$ can be approximated with the Euclidean one. Then, first we have
\begin{multline*}
\int_{M}g((\Theta(x)\otimes \nabla^g \rho_{\eps})\nabla^g u,\nabla^g u)\,  \textnormal{dvol}_{ g}=
(1+\mathcal{O}(r)) \int_{B_{r+\eps}\setminus B_{r}}\varphi_{\eps}'(|x|)\frac{x_{i}x_{j}\partial_{i}u\partial_{j}u}{|x|} \, dx \\ \mathop{\longrightarrow}_{\eps\to 0} -(r+\mathcal{O}(r^2))\int_{\partial B_{r}}\frac{x_{i}x_{j}\partial_{i}u\partial_{j}u}{|x|^2} \, d\sigma_x =-(r+\mathcal{O}(r^2))\int_{\partial B_{r}}(\partial_{r}u)^2 \, d\sigma_x = -(r+\mathcal{O}(r^2))\int_{\partial B^g_{r}(x_0)}(\partial_{\nu}u)^2 \, \textnormal{dvol}_{ g|_{\partial B_r^{g}(x_0)}}
\end{multline*}
where $B_r$ represents the Euclidean ball of radius $r$, $|x|$ the Euclidean lengh of $x$, $dx, d\theta, ds$ the Euclidean Lebesgue measure with respect to the variables $x,\theta,s$, and $d\sigma_x$ the Euclidean Lebesgue measure induced on $\partial B_r$. Secondly, for a general function $f$
\begin{multline*}
\int_{M}f(x) g(\Theta(x),  \nabla^g \rho_{\eps})\,  \textnormal{dvol}_{ g} = 
(1+\mathcal{O}(r)) \int_{B_{r+\eps}\setminus B_{r}} f(x)\varphi_{\eps}'(|x|) |x|\, dx= (1+\mathcal{O}(r)) \int_{r}^{r+\eps}s^n\varphi_{\eps}'(s)\int_{\S^{n-1}} f(r\theta) d\theta ds\\\mathop{\longrightarrow}_{\eps\to 0}-(1+\mathcal{O}(r))r^n\int_{\S^{n-1}}f(r\theta)d\theta=-(r+\mathcal{O}(r^2))\int_{\partial B_{r}}f \, d\sigma_x =-(r+\mathcal{O}(r^2)) \int_{\partial B^g_{r}(x_0)}f\,  \textnormal{dvol}_{ g|_{\partial B_r^{g}(x_0)}}
\end{multline*}
where we used the same notation as before, and $d\theta, ds$ are the Euclidean Lebesgue measures with respect to the spherical variables $\theta,s$.
We observe that
\[
\textnormal{div}_g \Theta(x) = \frac{1}{\sqrt{\det g}} \frac{\partial_i \left( \frac{\sqrt{\det g}}{g_{ii}} x_i \right)}{\partial x_i} = n + \mathcal{O}(r^2)
\]
and then \eqref{multi} can be rewritten as
\begin{multline*}(1+\mathcal{O}(r^2))\,  (n\,A(r)-r\,A'(r))=2\int_{B^{ g}_{r}(x_0)}\|\nabla^g u\|_g^2\, \textnormal{dvol}_{ g}-2(r+\mathcal{O}(r^2))\int_{\partial B^{ g}_{r}(x_0)}(\partial_{\nu}u)^2 \textnormal{dvol}_{ g|_{\partial B_r^{ g}(x_0)}}\\
+\lambda(m) (n+\mathcal{O}(r^2)) \int_{B^{ g}_{r}(x_0)}u^2\textnormal{dvol}_{ g}-\lambda(m) (r+\mathcal{O}(r^2)) \int_{\partial B^{ g}_{r}(x_0)}u^2 \textnormal{dvol}_{ g|_{\partial B_r^{ g}(x_0)}}.
\end{multline*}
Using the fact the following easy consequence of the Green formula
\[
\displaystyle{\int_{B^{ g}_{r}(x_0)}\|\nabla^g u\|_g^2=\lambda(m)\int_{B^{ g}_{r}(x_0)}u^2\textnormal{dvol}_{ g}+\int_{\partial B^{ g}_{r}(x_0)}u\partial_{\nu}u},
\] 
 we finally obtain \eqref{nj}, and the claim is proved.

\medskip

{\bf Third step: } In this last step we use the boundary condition $u(x_{0})=0$ and the Lipschitz continuous regularity of the optimal function $u$, which has been proven in Corollary \ref{cor:lip}.\\
Replacing \eqref{nj} in \eqref{derphi} we obtain: 
\begin{eqnarray*}\phi_{u,x_0}'(r)
&=&\frac{(1+\mathcal{O}(r))}{r^{n+1}}\left[-(n+2)\lambda(m) \int_{B^{ g}_{r}(x_0)}u^2\textnormal{dvol}_{ g}+\right.\\
& & \qquad \qquad \qquad \qquad + \left. \int_{\partial B^{ g}_{r}(x_0)}\left(
2r(\partial_{\nu}u)^2 +\lambda(m) r u^2 -4u\partial_{\nu}u+2\frac{u^2}{r}\right)\textnormal{dvol}_{ g|_{\partial B_r^{ g}(x_0)}}\right.\\
&=&
(1+\mathcal{O}(r)) \left[ \frac{2}{r^{n}}\int_{\partial B^{ g}_{r}(x_0)}\left(\partial_{\nu} u-\frac{u}{r}\right)^2\textnormal{dvol}_{ g|_{\partial B_r^{ g}(x_0)}}+ \right.\\
& & \qquad \qquad \qquad \qquad \left. -\frac{(n+2)\lambda(m)}{r^{n+1}}\int_{B^{ g}_{r}(x_0)}u^2\textnormal{dvol}_{ g}+\frac{\lambda(m)}{r^n}\int_{\partial B^{ g}_{r}(x_0)}u^2\textnormal{dvol}_{ g|_{\partial B_r^{ g}(x_0)}} \right]
\end{eqnarray*}
Proposition \ref{prop:monotonicity} follows then from the facts that $u(x_{0})=0$ and that $u$ is Lipschitz continuous.
\qed

\subsection{Blow-up procedure}\label{ssect:blowup}

Let $u$ be a solution of \eqref{eq:pbfonc}, $x_{0}\in\partial\Om_{u}$. 
 In this section we want to build and study the blow up of the solution $u$ around $x_0$. To that end, given $\eps>0$ small enough, we define on the manifold $M$ the metric $\bar g : =  \eps^{-2} \, g$ as we did at the beginning of this section, and the parameterization of $B^{g}_{\sqrt{\eps}}(x_0)$ given by
\[
Y ( y) : =\mbox{exp}_{x_0}^{g} \left(\eps \,  \Theta (y) \right)
\]
with $y = (y_1,...,y_n) \in B_{\frac{1}{\sqrt{\eps}}}:=\left\{ y \in \R^n \, /\, |y|<\frac{1}{\sqrt{\eps}} \right\}$ and $\Theta$ defined in \eqref{eq:theta}. 
In $B_{\sqrt{\eps}}^g (x_0)$ the metric $\bar g$ is the Euclidean one up to $\eps$ terms. Let us consider $B_{\frac{1}{\sqrt{\eps}}}$ with the metric $\bar g$ (induced by the parameterization $Y$). 
In the coordinates $y$ beloning to $B_{\frac{1}{\sqrt{\eps}}}$ we define 
\[
u_\eps(y):=\frac{1}{\eps}\, u( \textnormal{exp}^g_{x_0}(\eps\,\Theta(y)))\,.
\]
We have the following result:
 
 \begin{prop}\label{prop:blowupexist}
Let $u$ be a solution of \eqref{eq:pbfonc}, $x_{0}\in \partial\Om_{u}$. Then there exists a sequence $\eps_{k}$ going to 0 such that $u_{\eps_{k}}$ converges to a function $u_{0}:\R^n\to\R$ uniformly on any compact set. Moreover $u_{0}$ is nonnegative and Lipschitz continuous.
\end{prop}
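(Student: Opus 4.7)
The plan is to extract the blow-up by a standard Arzel\`a--Ascoli argument, the key input being the global Lipschitz regularity of $u$ already established in Corollary \ref{cor:lip}. The two essential observations to check are that $u_\eps(0)=0$ for every $\eps>0$, and that the family $\{u_\eps\}$ is uniformly Lipschitz continuous on every Euclidean ball $B_R\subset\R^n$ once $\eps$ is small enough; nonnegativity and boundedness will then follow for free.

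For the first observation, since $\Om_{u}=\{u>0\}$ was shown to be open in the proof of Corollary \ref{cor:lip}, the Lipschitz representative of $u$ (which coincides with the upper-semi-continuous representative of Corollary \ref{cor:continuity}) satisfies $u(x_0)=0$ at every point $x_0\in\partial\Om_u$, so $u_\eps(0)=\frac{1}{\eps}u(x_0)=0$. For the second, fix $R>0$ and take $\eps$ small enough that $\eps R$ is smaller than the injectivity radius at $x_0$ and than the radius $r_0$ on which the Lipschitz estimate on $u$ with constant $L$ is valid. Using the Taylor expansion of $g_{ij}$ in geodesic normal coordinates recalled at the beginning of Section~\ref{sect:reg}, the map $y\mapsto \textnormal{exp}^g_{x_0}(\eps\Theta(y))$ is bi-Lipschitz from $B_R$ into $B^g_{\eps R}(x_0)$ with distortion $1+\mathcal{O}(\eps R)$, so that, setting $p_i=\textnormal{exp}^g_{x_0}(\eps\Theta(y_i))$, one has
\[
|u_\eps(y_1)-u_\eps(y_2)|=\frac{1}{\eps}|u(p_1)-u(p_2)|\le \frac{L}{\eps}d_g(p_1,p_2)\le L(1+\mathcal{O}(\eps R))|y_1-y_2|.
\]
In particular $u_\eps$ is $2L$-Lipschitz on $B_R$ for $\eps$ sufficiently small, and combining with $u_\eps(0)=0$ yields the pointwise bound $|u_\eps(y)|\le 2L|y|$, hence uniform boundedness on every compact subset of $\R^n$.

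Equi-continuity and pointwise boundedness in hand, Arzel\`a--Ascoli combined with a diagonal extraction over an exhaustion $B_{R_k}$, $R_k\nearrow\infty$, produces a subsequence $\eps_k\to 0$ along which $u_{\eps_k}$ converges uniformly on compact sets of $\R^n$ to some function $u_0:\R^n\to\R$; nonnegativity is inherited from the $u_{\eps_k}$ and the uniform Lipschitz bound passes to the limit, giving Lipschitz continuity of $u_0$ with the same constant $2L$. There is essentially no obstacle in this step: all the analytic substance has already been packaged into Corollary \ref{cor:lip}, and the Riemannian setting enters only through the mild distortion of the exponential map, which is uniformly controlled by the normal-coordinate expansion. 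The genuinely substantive blow-up analysis will come afterwards, when one identifies the limit $u_0$ as a homogeneous global minimizer of the Alt--Caffarelli functional, which will rely on the nondegeneracy of $u$ (Corollary \ref{cor:nondeg}), the density estimates (Proposition \ref{prop:density}), and the Weiss-type monotonicity formula (Proposition \ref{prop:monotonicity}).
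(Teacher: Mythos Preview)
Your proof is correct and follows essentially the same route as the paper: both arguments feed the Lipschitz bound from Corollary \ref{cor:lip} together with $u(x_0)=0$ into Arzel\`a--Ascoli, then diagonalize over an exhaustion of $\R^n$. The only cosmetic difference is that the paper phrases the equicontinuity via the scaling identity $\nabla^{\bar g}u_\eps(y)=\nabla^{g}u(\textnormal{exp}^g_{x_0}(\eps\Theta(y)))$ in the rescaled metric $\bar g$, whereas you obtain it directly from the bi-Lipschitz distortion of the exponential map in normal coordinates; these are two ways of saying the same thing.
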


We call such $u_{0}$ a blow-up limit of $u$ at $x_{0}$.

\medskip

\begin{proof}
Let $R<\frac{1}{\sqrt{\eps}}$. With the notation we introduced above, we have
\[
\nabla^{\bar g} u_\eps(y)=\nabla^{g} u(\mbox{exp}_{x_0}^{g} \left(\eps \,  \Theta (y) \right)
)
\]
and
\[
\|\nabla^{\bar g} u_{\eps}\|_{\infty,B_{R}(0)}\leq \|\nabla^{g} u\|_{\infty,B^{g}_{\eps R}(x_{0})}
\] 
and so the Euclidean gradient of $u_\eps$ is uniformly bounded in $B_{R}(0)$. Therefore, as $u(x_{0})=0$, we also get for any $y\in B_{R}$, 
\[
|u_{\eps}(y)|\leq C\, \|\nabla^{\bar g} u_{\eps}(y)\|_{\bar g}\,\|y\|_{\bar g} \leq C'\,\|\nabla^g u\|_{\infty,B^g_{\eps\,R}(x_{0})}
\] 
which is also bounded uniformly in $\eps$. From Arzel\`a-Ascoli Theorem, we deduce that up to a subsequence, $u_{\eps}$ converges uniformly to a function $u_{0}$ on $B_{R}(0)$. Using a diagonalization argument, we prove that up to a subsequence, $u_{\eps}$ converges to $u_{0}: \mathbb{R}^n \to \mathbb{R}$ uniformly on every ball $B_{R}(0)$ and therefore on any compact set. The properties of $u_{0}$ follow easily.\qed
\end{proof}



Using the previous subsections, we are also in position to prove, similarly to \cite{W98Par}, the following properties of blow ups:

\begin{prop}\label{prop:blowuphomog}
Let $u$ be a solution of \eqref{eq:pbfonc}, $x_{0}\in \partial\Om_{u}$, and $u_{0}$ a blow-up limits of $u$ at $x_{0}$. Then $u_0$ is (positively) $1$-homogeneous and is a non-trivial
 global solution of the Alt-Caffarelli functional, which means:
\begin{equation}\label{eq:pbblowup}
\int_{B_{R}(0)}|\nabla^e u_0|^2 + \Lambda\, \vol_e(\Om_{u_{0}}\cap B_{R}(0)) \le\int_{B_{R}(0)} |\nabla^e w|^2 + \Lambda
\, \vol_e(\Om_{w}\cap B_{R}(0)).
\end{equation}
for every $R>0$ and $w\in H^1_{loc}(\R^n)$ such that $w=u_0$ outside $B_{R}(0)$.
\end{prop}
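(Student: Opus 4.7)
The plan is to establish the three assertions in the order: non-triviality, Alt--Caffarelli minimality, then $1$-homogeneity. \textbf{Non-triviality} follows quickly from the nondegeneracy of $u$: since $x_{0}\in\partial\Om_{u}$, the density lower bound of Proposition \ref{prop:density} gives points of $\Om_{u}$ in every $B^{g}_{r/2}(x_{0})$ with $r$ small, so the contrapositive of Lemma \ref{lem:nondeg} yields $\|u\|_{L^{\infty}(B^{g}_{r}(x_{0}))}>c\,r$ for the constant $c$ of that lemma. Evaluating at $r=\eps$ and rescaling gives $\|u_{\eps}\|_{L^{\infty}(B_{1}(0))}>c$ for all $\eps$ small, hence $\|u_{0}\|_{L^{\infty}(\overline{B_{1}(0)})}\geq c>0$ in the limit.

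\textbf{Alt--Caffarelli minimality.} Given $R>0$ and $w\in H^{1}_{\mathrm{loc}}(\R^{n})$ with $w=u_{0}$ outside $B_{R}(0)$, we construct a competitor $v_{\eps}\in H^{1}(M)$ equal to $u$ outside $B^{g}_{\eps R}(x_{0})$ and, inside, defined by $\eps\,w_{\eps}(y)$ with $y=\eps^{-1}(\exp^{g}_{x_{0}})^{-1}(\cdot)$, where $w_{\eps}$ interpolates between $w$ on $B_{R-\delta}(0)$ and $u_{\eps}$ near $\partial B_{R}(0)$ via a standard $H^{1}$ cut-off (needed because $u_{\eps}\to u_{0}=w$ only asymptotically on $\partial B_{R}$). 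The two-sided penalization of Proposition \ref{prop:pen} with parameter $\Lambda>0$ (positivity from Proposition \ref{prop:positivity}) yields
\[
J(u)+\Lambda\,\vol_{g}(\Om_{u})\leq J(v_{\eps})+\Lambda\,\vol_{g}(\Om_{v_{\eps}})+o(\eps^{n}),
\]
the error absorbing the gap $\mu_{\pm}(h)-\Lambda$ at $h=\vol_{g}(B^{g}_{\eps R}(x_{0}))$. The $L^{2}$-contribution is $O(\eps^{n+2})$ since $u(x_{0})=0$ and $u$ is Lipschitz by Corollary \ref{cor:lip}. Changing variables $x=\exp^{g}_{x_{0}}(\eps\Theta(y))$, using $\bar g_{ij}(y)=\delta_{ij}+O(\eps^{2}|y|^{2})$, and dividing by $\eps^{n}$, the inequality rescales to
\[
\int_{B_{R}(0)}|\nabla^{e}u_{\eps}|^{2}+\Lambda\,\vol_{e}(\Om_{u_{\eps}}\cap B_{R}(0))\leq \int_{B_{R}(0)}|\nabla^{e}w_{\eps}|^{2}+\Lambda\,\vol_{e}(\Om_{w_{\eps}}\cap B_{R}(0))+o(1).
\]
Letting $\eps\to 0$ (by lower semi-continuity of the Dirichlet energy and Proposition \ref{prop:density} to handle the measure term from below) and then $\delta\to 0$, we deduce \eqref{eq:pbblowup}. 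Choosing $w=u_{0}$ in the above also provides convergence of the Dirichlet norms, which upgrades to strong $H^{1}_{\mathrm{loc}}$-convergence $u_{\eps}\to u_{0}$.

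\textbf{Homogeneity.} Since $u_{0}$ is an Alt--Caffarelli minimizer, it satisfies the Euclidean Euler--Lagrange equation \eqref{eq:ELAC}, so the exact Weiss identity of Remark \ref{rk:weiss} applies: $(\phi^{e}_{u_{0},0})'(r)=\frac{2}{r^{n}}\int_{\partial B_{r}(0)}(\partial_{r}u_{0}-u_{0}/r)^{2}$. Direct rescaling of \eqref{phiweiss} gives $\phi^{\bar g}_{u_{\eps},0}(r)=\phi^{g}_{u,x_{0}}(\eps r)+O(\eps r)$. By Corollary \ref{limit-weiss}, $\phi^{g}_{u,x_{0}}(\eps r)\to L$ as $\eps\to 0$ for every fixed $r$, with $L$ independent of $r$. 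Combined with the strong $H^{1}_{\mathrm{loc}}$-convergence of the previous step and the density estimates of Proposition \ref{prop:density}, this yields $\phi^{\bar g}_{u_{\eps},0}(r)\to\phi^{e}_{u_{0},0}(r)$ and hence $\phi^{e}_{u_{0},0}(r)\equiv L$ for all $r>0$. By the Weiss identity this forces $\partial_{r}u_{0}\equiv u_{0}/r$ almost everywhere, and therefore $u_{0}$ is positively $1$-homogeneous.

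\textbf{Main obstacle.} The delicate step is the Alt--Caffarelli minimality: we must absorb, into a genuinely $o(\eps^{n})$ error after rescaling, both the boundary mismatch between $u_{\eps}$ and $u_{0}$ on $\partial B_{R}$ (handled by the cut-off $w_{\eps}$, with care since $w$ is only $H^{1}$) and the penalization defect $\mu_{\pm}(h)-\Lambda$ at scale $h\sim\eps^{n}$. The positivity $\Lambda>0$ from Proposition \ref{prop:positivity} is essential: without it the limit problem would degenerate and would not capture the Alt--Caffarelli structure that drives the subsequent regularity analysis.
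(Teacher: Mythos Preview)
Your argument is correct and follows essentially the same route as the paper: a cut-off competitor $w_\eps$ to transfer the penalized optimality to the limit, the scaling identity $\phi^{\bar g}_{u_\eps,0}(r)=\phi^{g}_{u,x_0}(\eps r)$ together with Corollary~\ref{limit-weiss} for homogeneity, and Lemma~\ref{lem:nondeg} for non-triviality. Two minor remarks: (i) the scaling of the Weiss functional is in fact exact (no $O(\eps r)$ correction) once one uses the metric $\bar g$, and (ii) the paper \emph{first} imports strong $H^1_{\mathrm{loc}}$-convergence and $L^1$-convergence of $\mathbbm{1}_{\Om_{u_\eps}}$ from \cite{MTV17Reg}, then uses these in the minimality step, whereas you obtain strong $H^1$-convergence \emph{a posteriori} by testing with $w=u_0$; your ordering is self-contained and works because the uniform Lipschitz bound lets you control the annulus contribution directly, while for the left-hand side weak lower semicontinuity and Fatou suffice.
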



\begin{proof}{ 
 By definition, there exists $\eps_{k}$ going to 0 such that $u_{\eps_{k}}$ converges to $u_{0}$ on any compact set in $\R^n$. For the sake of clarity, we drop reference to the subsequence and denote $\eps$ instead of $\eps_{k}$. We proceed by steps.
 
 \medskip

{\bf First step: }we start improving the convergence. Following the proof of \cite[Proposition 4.5(a)]{MTV17Reg} we obtain that for any $R>0$, $u_\eps$ converges to $u_{0}$ strongly in $H^1(B_{R}(0))$, and that $\Om_\eps:=\{u_\eps>0\}$ converges to $\Om_{0}:=\{u_{0}>0\}$ strongly in $L ^1(B_{R})$.

\medskip

{\bf Second step: }using the density estimate given in Proposition \ref{prop:density}, it is classical that we have in fact  convergence of $\overline{\Om_{\eps}}$ to $\overline{\Om_{0}}$ and $\Om_{\eps}^c$ to $\Om_{0}^c$ for the Hausdorff metric in $B_{R}$ (see for example \cite[Proof of Theorem 2]{IM18Inf}).

\medskip

{\bf Third step: }we now prove that $u_{0}$ is a global minimizer of the Alt-Caffarelli functional. Let $R>0$ and $w\in H^1_{loc}(\R^n)$ such that $w=u_0$ outside $B_{R}(0)$. For any given $v$ defined at least on $B_{\frac{1}{\sqrt{\eps}}}$ of $\R^n$ for some $\eps$, we define for $x < \sqrt{\eps}$,
\[
v^{\eps}(\textnormal{exp}^{g}_{x_0}(\Theta(x)))=\eps\, v\left(\frac{x}{\eps}\right)
\] 
the blow-down of $v$ centered at $x_0$, assuming that the injectivity radius at $x_0$ is bigger than $\sqrt{\eps}$. Moreover, by scaling properties,
$$J(v^{\eps})=\int_{B_{\sqrt{\eps}}^{g}(x_0)}\|\nabla^{g} v^{\eps}\|_{g}^2 \, \textnormal{dvol}_{g} +\lambda(m)\int_{B_{\sqrt{\eps}}^{g}(x_0)}(v^{\eps})^2 \,  \textnormal{dvol}_{g} =\eps^n\left(\int_{B_{\frac{1}{\sqrt{\eps}}}} \|\nabla^{\bar g} v\|_{\bar g}^2 \,  \textnormal{dvol}_{\bar g} +\lambda(m)\, \eps^2\int_{B_{\frac{1}{\sqrt{\eps}}}} v^2 \,  \textnormal{dvol}_{\bar g} \right)$$
and $\vol_g(\Om_{v^{\eps}})=\eps^n\,\vol_{\bar g}(\Om_{v})$.
 We introduce $\eta\in C^\infty_{c}(B_{R}(0))$ such that $0\leq \eta\leq 1$, and define 
 \[
 w_{\eps}:=w+(1-\eta)(u_{\eps}-u_{0})
 \] 
 which is equal to $u_{\eps}$ outside $B_{R}(0)$. Therefore
$(w_\eps)^{\eps}=u$ in $B_{\sqrt{\eps}}^g(x_0) \setminus B_{\eps\, R}^g (x_0)$, and therefore 
\[
h_\eps:=\big|{\vol_g(\Om_{(w_\eps)^\eps})-\vol_g(\Om_{u})}\big|\leq C\,\eps^n
\] 
so by Proposition \ref{prop:pen}, we have for $\eps$ small enough
$$J(u)+\mu(h_\eps)\vol_g(\Om_{u})\leq J((w_\eps)^{\eps})+\mu(h_\eps)\vol_g(\Om_{(w_\eps)^{\eps}}).$$
So, using the previous scaling properties and localizing in $B_{{R}}$ we obtain
\begin{multline*}\int_{B_{{R}}}\|\nabla^{\bar g} u_{\eps}\|_{\bar g}^2 \, \textnormal{dvol}_{\bar g} +\lambda(m)\, \eps^2\int_{B_{{R}}} (u_{\eps})^2 \, \textnormal{dvol}_{\bar g} +\mu(h_\eps)\vol_{\bar g}(\Om_{u_{\eps}}\cap B_{{R}})\\
\leq
\int_{B_{{R}}}\|\nabla^{\bar g} w_\eps\|_{\bar g}^2 \, \textnormal{dvol}_{\bar g} +\lambda(m)\, \eps^2\int_{B_{{R}}} (w_\eps)^2\, \textnormal{dvol}_{\bar g} +\mu(h_\eps)\vol_{\bar g}(\Om_{w_\eps}\cap B_{{R}}).
\end{multline*}
We first use the inclusion $\Om_{w_\eps}\cap B_{{R}}\subset \{x\in B_{{R}}, w(x)>0\textrm{ and }\eta(x)=1\}\cup\{x\in B_{{R}}, 0\leq \eta(x)<1\}$ to dominate $\vol_{\bar g}(\Om_{w_\eps})$.
Then using that $u_\eps$ converges to $u_{0}$ strongly in $H^1_{loc}(\R^n)$, and as the same goes for $w_\eps$, we obtain as $\eps\to 0$ (taking in account that the metric $\bar g$ converges uniformly to the Euclidean metric):
$$\int_{B_{{R}}}|\nabla^e u_{0}|^2+\Lambda\, \vol_e(\Om_{u_{0}}\cap B_{{R}})\leq 
\int_{B_{{R}}}|\nabla^e w|^2+\Lambda(\vol_e(\Om_{w}\cap \{\eta=1\})+\vol_e(\{0\leq \eta<1\}\cap B_{{R}}).$$
We conclude by choosing $\{\eta=1\}$ arbitrary close to $B_{{R}}$. 

\medskip

{\bf Fourth step: }Let us prove now that $u_{0}$ is 1-homogeneous. with the notations of Proposition \ref{prop:monotonicity} and by scaling properties, we have
\[
\phi^{\bar g}_{u_\eps,0}(r)=\phi^g_{u,x_0}(\eps\,r)\,.
\]
From Corollary \ref{limit-weiss}, we know that $\phi^g_{u,x_0}(0^+)$ exists, and so 
\[
\lim_{\eps\to0^+}\phi^{\bar g}_{u_{\eps},0}(r)=\phi^g_{u,x_0}(0^+)\,.
\]
From the convergence properties of the blow-up that we proved above, we have on the other hand that 
\[
\lim_{\eps\to0^+}\phi^{\bar g}_{u_{\eps},0}(r)=\phi^e_{u_{0},0}(r)\,.
\] 
Combining the two previous results, we obtain that $(\phi^e_{u_{0},0})'(r)=0$, which implies from \cite{W99Par} (see Remark \ref{rk:weiss}) that $u_{0}$ is 1-homogeneous.

\medskip

{\bf Fifth step: }Classically, from the non-degeneracy of $u$ given in Lemma \ref{lem:nondeg}, we conclude that $u_{0}$ is non-trivial, see for example \cite[Proposition 4.5]{MTV17Reg}.}

\medskip 

The proof of the proposition is then complete. 
\qed
\end{proof}

\medskip

From the previous results, one can deduce a formula linking the limit at 0 of the Weiss functional $\phi^g_{u,x_{0}}(0^+)$ and the density of $x_{0}\in\partial\Om_{u}$.

\begin{prop}\label{prop:density2} (Density formula). Let $u$ be a solution of \eqref{eq:pbfonc} and $x_{0}\in \partial\Om_{u}$. For any blow-up $u_{0}$ of $u$ at $x_{0}$, we have
\begin{equation}\label{eq:density}
\theta(x_{0}):=\liminf_{r\to 0} {\frac{\vol_{g}(\Om\cap B^{g}_{r}(x_{0}))}{\vol_{g}(B^{g}_{r}(x_0))}}=\frac{1}{\Lambda\omega_{n}}\phi^g_{u,x_{0}}(0^+)=\frac{\vol_e(\{u_{0}>0\}\cap B_{R})}{\vol_e(B_{R})}\,, 
\end{equation}
where $\phi^g_{u,x_0}$ is given by \eqref{phiweiss} and $R>0$.
\end{prop}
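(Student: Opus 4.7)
The plan is to first compute $\phi^e_{u_0,0}$ explicitly using the fact (from Proposition \ref{prop:blowuphomog}) that any blow-up $u_0$ is $1$-homogeneous and a global minimizer of the Alt-Caffarelli functional, and then to connect the resulting Euclidean density of $\{u_0>0\}$ to the Riemannian density $\theta(x_0)$ via the convergence properties of the blow-up. The identity $\phi^g_{u,x_0}(0^+)=\phi^e_{u_0,0}(r)$ for every $r>0$ was already established in the proof of Proposition \ref{prop:blowuphomog} (it is the key ingredient used there to deduce the homogeneity from the Weiss monotonicity), so the middle quantity in \eqref{eq:density} will serve as the bridge.

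First I compute $\phi^e_{u_0,0}(r)$. Testing the minimality \eqref{eq:pbblowup} against variations of the form $u_0+t\varphi$ with $\varphi\in C^\infty_c(\{u_0>0\})$ shows that $u_0$ is harmonic on its positivity set, so $u_0\,\Delta u_0=0$ in $\mathcal{D}'(\R^n)$. By $1$-homogeneity and Euler's identity, on $\partial B_r$ we have $\partial_\nu u_0(r\theta)=u_0(\theta)=u_0(r\theta)/r$. Green's identity then yields
\begin{equation*}
\int_{B_r}|\nabla^e u_0|^2\,\textnormal{dvol}_e=\int_{\partial B_r}u_0\,\partial_\nu u_0\,\textnormal{dvol}_e|_{\partial B_r}=\frac{1}{r}\int_{\partial B_r}u_0^2\,\textnormal{dvol}_e|_{\partial B_r},
\end{equation*}
so the gradient term and the boundary term in the definition \eqref{phiweiss} of $\phi^e_{u_0,0}(r)$ cancel exactly, and we obtain
\begin{equation*}
\phi^e_{u_0,0}(r)=\frac{\Lambda}{r^n}\,\vol_e(\{u_0>0\}\cap B_r)=\Lambda\,\omega_n\,\frac{\vol_e(\{u_0>0\}\cap B_R)}{\vol_e(B_R)},
\end{equation*}
where for the last equality we used that $\{u_0>0\}$ is a cone (since $u_0$ is $1$-homogeneous), so the ratio is independent of $R>0$. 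Combined with $\phi^g_{u,x_0}(0^+)=\phi^e_{u_0,0}(r)$, this proves the middle equality of \eqref{eq:density}.

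Next I identify this blow-up density with $\theta(x_0)$. Fix any sequence $r_k\to 0^+$; by Proposition \ref{prop:blowupexist}, up to a subsequence $u_{r_k}\to \tilde u_0$ locally uniformly, where $\tilde u_0$ is a blow-up limit. By the scaling of the parameterization $Y(y)=\textnormal{exp}^g_{x_0}(r_k\Theta(y))$ one has $\textnormal{dvol}_g\circ Y=r_k^n\,\textnormal{dvol}_{\bar g}$, hence
\begin{equation*}
\frac{\vol_g(\Om\cap B^g_{r_k}(x_0))}{\vol_g(B^g_{r_k}(x_0))}=\frac{\vol_{\bar g}(\{u_{r_k}>0\}\cap B_1)}{\vol_{\bar g}(B_1)}.
\end{equation*}
As $r_k\to 0$, the metric $\bar g$ converges uniformly on $B_1$ to the Euclidean metric, and $\mathbbm{1}_{\{u_{r_k}>0\}}\to \mathbbm{1}_{\{\tilde u_0>0\}}$ in $L^1(B_1)$ (this $L^1$ convergence of positivity sets is the step 1 of the proof of Proposition \ref{prop:blowuphomog}, itself based on Proposition \ref{prop:density}). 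Therefore the right-hand side converges to $\vol_e(\{\tilde u_0>0\}\cap B_1)/\omega_n$, which by the first part of the argument equals $\phi^g_{u,x_0}(0^+)/(\Lambda\omega_n)$ and, crucially, does not depend on the chosen subsequence. Hence every subsequential limit of $r\mapsto \vol_g(\Om\cap B^g_r(x_0))/\vol_g(B^g_r(x_0))$ as $r\to 0$ coincides with this common value, so the $\liminf$ in \eqref{eq:density} is in fact a limit and equals $\phi^g_{u,x_0}(0^+)/(\Lambda\omega_n)$.

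The main technical point is the application of Green's identity to the (merely Lipschitz) function $u_0$: one justifies it by approximating with $(u_0-\delta)_+$ and sending $\delta\to 0$, using that $u_0$ is smooth on $\{u_0>0\}$ and that the boundary term $\int_{\partial B_r\cap\{u_0=0\}}u_0\,\partial_\nu u_0$ vanishes by the Lipschitz bound. Once this identity is in place, the algebraic cancellation in the definition of $\phi^e_{u_0,0}(r)$ is straightforward, and the remainder of the argument is a direct consequence of the convergence results already proved.
\qed
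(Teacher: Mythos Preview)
Your proof is correct and follows essentially the same route as the paper: use the scaling identity to rewrite the density ratio in the blow-up chart, pass to the limit via the $L^1$ convergence of positivity sets, and identify $\phi^e_{u_0,0}(r)$ with $\Lambda\,\vol_e(\{u_0>0\}\cap B_r)/r^n$ using the $1$-homogeneity of $u_0$ together with $\phi^g_{u,x_0}(0^+)=\phi^e_{u_0,0}(r)$ from the proof of Proposition~\ref{prop:blowuphomog}. Your version is in fact a bit more careful on two points: you spell out the cancellation in $\phi^e_{u_0,0}(r)$ via Green's identity and Euler's relation (the paper simply asserts the formula from homogeneity), and you explicitly argue that the $\liminf$ defining $\theta(x_0)$ is actually a full limit by showing that every subsequential blow-up yields the same density $\phi^g_{u,x_0}(0^+)/(\Lambda\omega_n)$, a subtlety the paper passes over.
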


\begin{proof} 
On one hand, because of the definition of blow ups, considering $u_{\eps}$ converging (up to a subsequence) to $u_{0}$, one has for $\eps$ small enough
$${\frac{\vol_{g}(\Om_{u}\cap B^g_{\eps}(x_{0}))}{\vol_{g}(B^g_{\eps}(x_{0}))}}=\frac{\vol_{\bar g}(\Om_{u_{\eps}}\cap B_{1})}{\vol_{\bar g}(B_{1})}$$ which gives at the limit
$\eps\to 0$, 
\[
\theta(x_{0})=\frac{\vol_e(\Om_{u_{0}}\cap B_{1})}{\vol_e(B_{1})}\,.
\] 
Moreover, $u_{0}$ being 1-homogeneous, the last term is independant on the radius of the considered ball.
On the other hand, in the fourth step of the proof of Proposition \ref{prop:blowuphomog}, we have seen that 
$\phi_{u_{0},0}^e(r)=\phi^g_{u,x_{0}}(0^+).$
As $u_{0}$ is 1-homogeneous, we also have 
\[
\phi_{u_{0},0}^e(r)=\Lambda\frac{\vol_e(\{u_{0}>0\}\cap B_{r})}{r^d}\, ,
\] 
which concludes the proof.\qed
\end{proof}

We conclude this section with the following corollary about the possible values of the density:

\begin{corollary}\label{cor:gap} (Density bound-density gap). 
Let $u$ be a solution of \eqref{eq:pbfonc} and $x_{0}\in \partial\Om_{u}$. Then $\theta(x_{0})\geq \frac{1}{2}\,,$ and there exists $\eta>0$ independant on $x_{0}$ such that if $\theta(x_{0})\neq \frac{1}{2}$, then $\theta(x_{0})\geq \frac{1}{2}+\eta$.
\end{corollary}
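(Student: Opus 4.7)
The plan is to reduce the statement to a property of $1$-homogeneous global minimizers of the Alt-Caffarelli functional in $\R^n$, via Propositions \ref{prop:blowuphomog} and \ref{prop:density2}, then to combine spherical Faber-Krahn with De Silva's improvement-of-flatness.

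First, I would let $u_0$ be a blow-up of $u$ at $x_0$; by Proposition \ref{prop:blowuphomog}, it is a non-trivial, non-negative, $1$-homogeneous global minimizer of the Alt-Caffarelli functional of parameter $\Lambda>0$. Writing $u_0(r\theta)=rg(\theta)$ for $r>0$, $\theta\in S^{n-1}$, a direct computation gives
$$\Delta u_0=\frac{(n-1)g+\Delta_{S^{n-1}}g}{r}\, ,$$
and harmonicity of $u_0$ inside $\{u_0>0\}$ translates into $-\Delta_{S^{n-1}}g=(n-1)g$ on $D:=\{g>0\}\subset S^{n-1}$ with vanishing trace on $\partial D$: thus $g$ is a first Dirichlet eigenfunction on $D$ with eigenvalue $n-1$. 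I would then invoke the spherical Faber-Krahn inequality: comparing $D$ with the geodesic cap $D^*$ of the same measure yields $\lambda_1(D)\ge\lambda_1(D^*)$; since $\sigma\mapsto\lambda_1(\text{cap of measure }\sigma)$ is strictly decreasing and equals $n-1$ precisely at $\sigma=|S^{n-1}|/2$ (the hemisphere), this forces $|D|\ge |S^{n-1}|/2$. Combined with Proposition \ref{prop:density2}, i.e. $\theta(x_0)=|D|/|S^{n-1}|$, this gives $\theta(x_0)\ge 1/2$. Moreover, equality $\theta(x_0)=1/2$ can only occur when $D$ is, up to rotation, a hemisphere and $g$ the corresponding first eigenfunction, in which case the free-boundary condition $|\nabla u_0|^2=\Lambda$ (coming from the Alt-Caffarelli Euler-Lagrange equation, applied on the smooth hyperplane $\{x\cdot e=0\}$) pins down the normalization constant and gives $u_0(x)=\sqrt\Lambda\,(x\cdot e)_+$.

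For the gap, I would argue by contradiction: assume there exist $x_k\in\partial\Omega_u$ with densities $\theta_k:=\theta(x_k)$ satisfying $1/2<\theta_k<1/2+1/k$, and pick $u_0^{(k)}$ an associated blow-up, which is a $1$-homogeneous global minimizer of Alt-Caffarelli of density $\theta_k$. The uniform Lipschitz bound (Corollary \ref{cor:lip}), the non-degeneracy (Corollary \ref{cor:nondeg}) and the density estimates of Proposition \ref{prop:density} are all stable under blow-up, so a diagonal extraction produces a locally uniform limit $u_0^\infty$ which, by the same stability arguments as in the proof of Proposition \ref{prop:blowuphomog} (strong $H^1_{\mathrm{loc}}$ convergence and Hausdorff convergence of the positivity sets), is a $1$-homogeneous global minimizer of density $1/2$, hence a half-plane solution $\sqrt\Lambda\,(x\cdot e)_+$ by the previous paragraph. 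For $k$ large, $u_0^{(k)}$ is then $L^\infty$-close to $u_0^\infty$ on $B_1$, so De Silva's improvement-of-flatness theorem \cite{DS11Fre} applies and $\partial\{u_0^{(k)}>0\}$ is a $C^{1,\alpha}$ hypersurface near the origin. Since $u_0^{(k)}$ is $1$-homogeneous, its free boundary is a cone with vertex $0$; a cone which is $C^1$ near its vertex must be a hyperplane through the origin, so $u_0^{(k)}$ would itself be a half-plane solution with $\theta_k=1/2$, contradicting $\theta_k>1/2$. The resulting $\eta>0$ thus depends only on $n$ and $\Lambda$.

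The hard part will be this last step: it relies on an external $\epsilon$-regularity result (De Silva's flatness theorem) together with the rigidity statement that a $1$-homogeneous minimizer whose free boundary is $C^1$ at the origin is automatically a half-plane solution; the rest consists essentially of the spherical Faber-Krahn analysis of the blow-up profile and of the compactness arguments already established in this section.
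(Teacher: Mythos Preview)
Your proposal is correct and follows essentially the same approach as the paper, which in fact does not reproduce a proof but only sketches the argument and refers to \cite{MTV17Reg,DET17Fre}: the density bound via the spherical Faber-Krahn analysis of the trace of a $1$-homogeneous blow-up, and the gap via a compactness-plus-flatness argument invoking De Silva's $\varepsilon$-regularity theorem. Your write-up is actually more detailed than what the paper provides, and all the key ingredients (Propositions \ref{prop:blowuphomog} and \ref{prop:density2}, the rigidity of $1$-homogeneous minimizers with $C^1$ free boundary at the vertex) are the ones the paper points to.
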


We do not reproduce the proof of this result, as it is of purely Euclidean nature, when combined with Proposition \ref{prop:density2}; see \cite[Lemma 5.3 and 5.4]{MTV17Reg} or \cite[Proposition 6.1 and Lemma 6.3]{DET17Fre} for a detailed proof. For the sake of clarity, let us nevertheless recall the idea behind the first part of the result, namely that $\theta(x_{0})\geq \frac12$. Thanks to Propositions \ref{prop:blowuphomog} and \ref{prop:density2}, this relies on the non-existence of a non-trivial 1-homogeneous harmonic function on a cone of density less that $\frac{1}{2}$. To see this, consider $u_{0}$ such a function, then its trace on the sphere $\S^{n-1}$ is a first eigenfunction of the Laplace-Beltrami operator on subdomain of the sphere with eigenvalue $n-1$. This cannot happen for a strict subset of the sphere, because of Faber-Krahn type results in the sphere.

The second part of the statement (the gap estimate) is more involved and relies on a ``flatness imply regularity'' result (\cite[Theorem 8.1]{AC81Exi} or \cite[Theorem 1]{DS11Fre}).

To conclude this section, we see that the optimality condition $\|\nabla^gu\|_{g}=\sqrt{\Lambda}$ is valid in the sense of viscosity. 

\begin{prop} Let $u$ be a solution of \eqref{eq:pbfonc}. The function $u$ is a viscosity solution to $\|\nabla^{g} u\|_{g}=\sqrt{\Lambda}$ on $\partial\Om_{u}$. This means that for every $x_{0}\in\partial\Om$:
\begin{itemize}
\item if $\varphi:M\to\R$ is differentiable at $x_{0}$ and such that $u\geq \varphi$ in $\Om_{u}$ with equality at $x_{0}$, then $\|\nabla^{g} \varphi\|_{g}(x_{0})\leq \sqrt{\Lambda}$,
\item if $\varphi:M\to\R$ is differentiable at $x_{0}$ and such that $u\leq \varphi$ in $\Om_{u}$ with equality at $x_{0}$, then $\|\nabla^{g} \varphi\|_{g}(x_{0})\geq \sqrt{\Lambda}$.
\end{itemize}
\end{prop}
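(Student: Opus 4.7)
The plan is to reduce the claim, via the blow-up procedure of Section \ref{ssect:blowup}, to a Euclidean rigidity statement for $1$-homogeneous minimizers of the Alt--Caffarelli functional. I prove only the first assertion; the second follows by a symmetric argument using the penalization $\mu_+$ from above rather than $\mu_-$. Fix $x_0\in\partial\Om_u$ and $\varphi$ differentiable at $x_0$ with $u\ge\varphi$ near $x_0$ in $\Om_u$ (and consequently, in the standard viscosity convention, $\varphi\le 0$ on $\Om_u^c$ near $x_0$, since $u\equiv 0$ there) and $u(x_0)=\varphi(x_0)=0$. Setting $a:=\nabla^g\varphi(x_0)\in T_{x_0}M$, we may assume $a\neq 0$ and aim to show $\|a\|_g\le\sqrt{\Lambda}$.

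Working in normal geodesic coordinates at $x_0$ as in Section \ref{sect:reg}, I identify $T_{x_0}M$ with $\R^n$ endowed with the Euclidean inner product, so $a$ becomes a vector in $\R^n$. I then extract a blow-up subsequence $\eps_k\to 0$ along which $u_{\eps_k}\to u_0$ uniformly on compact sets, with $u_0$ a non-trivial $1$-homogeneous global Alt--Caffarelli minimizer with constant $\Lambda$ (Proposition \ref{prop:blowuphomog}). The rescaled test functions $\varphi_\eps(y):=\eps^{-1}\varphi(\textnormal{exp}^g_{x_0}(\eps\,\Theta(y)))$ converge uniformly on compacts to $\ell(y):=a\cdot y$, by differentiability of $\varphi$ at $x_0$. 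Using the Hausdorff convergence $\overline{\Om_{u_{\eps_k}}}\to\overline{\Om_{u_0}}$ (established in the proof of Proposition \ref{prop:blowuphomog}), the inequality $u_{\eps_k}\ge\varphi_{\eps_k}$ on $\Om_{u_{\eps_k}}$ passes to the limit to give $u_0(y)\ge a\cdot y$ on $\overline{\Om_{u_0}}$. A short geometric argument then upgrades this to $u_0(y)\ge(a\cdot y)^+$ on all of $\R^n$: since $u_0$ vanishes on $\partial\Om_{u_0}$ one has $a\cdot y\le 0$ there, and by $1$-homogeneity of the cone $\Om_{u_0}$, any ray $\{t\,y_0:t>0\}$ lying outside $\overline{\Om_{u_0}}$ with $a\cdot y_0>0$ would contradict the full-neighborhood touching inequality via Hausdorff convergence.

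The conclusion $\|a\|_g\le\sqrt{\Lambda}$ then follows from the classical Euclidean rigidity for $1$-homogeneous Alt--Caffarelli minimizers: every such $u_0$ satisfying $u_0\ge\alpha(\nu\cdot y)^+$ with $\alpha>0$ and $|\nu|=1$ must have $\alpha\le\sqrt{\Lambda}$. This is proven by testing the minimality of $u_0$ in a ball $B_R$ against a competitor obtained by interpolating between the shifted half-plane $(\alpha\,\nu\cdot y-s)^+$ (for small $s>0$) and $u_0|_{\partial B_R}$; the positivity set shrinks by a strip of volume of order $sR^{n-1}$, yielding a volume saving of order $\Lambda sR^{n-1}$ against a Dirichlet cost of order $\alpha^2 sR^{n-1}$ at leading order, forcing $\alpha^2\le\Lambda$ in the limit $s\to 0^+$ (see \cite{AC81Exi,DET17Fre,MTV17Reg}). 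I expect this rigidity step to be the main obstacle: designing the interpolating competitor and tracking signs in the energy balance requires care. By contrast, the Riemannian features of the original problem are entirely absorbed in the blow-up reduction, which converts the constrained Laplace--Beltrami eigenvalue problem on $M$ into the classical Euclidean Alt--Caffarelli setting on $\R^n$.
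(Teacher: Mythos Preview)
Your proposal is correct and follows essentially the same route as the paper, which simply defers to \cite[Lemma 5.2]{MTV17Reg}: reduce via the blow-up of Proposition \ref{prop:blowuphomog} to a $1$-homogeneous global minimizer of the Euclidean Alt--Caffarelli functional, pass the linearized test function through the limit, and invoke the classical Euclidean rigidity. Two minor points worth tightening: (i) your step upgrading $u_0\ge a\cdot y$ on $\overline{\Om_{u_0}}$ to $u_0\ge(a\cdot y)^+$ on $\R^n$ is cleanest argued via connectedness of the half-space $\{a\cdot y>0\}$ together with $\partial\Om_{u_0}\subset\{a\cdot y\le 0\}$ and the density lower bound $\theta(0)\ge\tfrac12$ from Corollary \ref{cor:gap}, rather than the ray argument you sketch; (ii) your heuristic energy balance for the competitor in step 8 is imprecise as written (replacing $u_0$ by a shifted half-plane does not obviously control the Dirichlet term), but the references you cite do contain a correct version, so this is a presentation issue rather than a gap.
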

The proof of this result follows exactly the same lines as \cite[Lemma 5.2]{MTV17Reg} and relies again on Proposition \ref{prop:blowuphomog} and the study of non-trivial 1-homogeneous global minimizers of the Alt-Caffarelli functional.

\subsection{Proof of Theorem \ref{th:reg0}}\label{ssect:conclusion}

We are now in position to prove Theorem \ref{th:reg0}.  The proof is now very close to the proof of \cite[Proposition 5.18]{MTV17Reg}, so we give fewer details; see also \cite[Propositions 5.32 and 5.35]{RTV18Exi} or \cite[Corollary 7.2 and Theorem 8.1]{DET17Fre} for more detailed proofs. 

\medskip

\noindent{\bf Proof of Theorem \ref{th:reg0}. } Let $\Om^*$ be a solution of \eqref{eq:pbforme2}. Then as $M$ is assumed to be connected, from Proposition \ref{prop:freebdy} and \ref{prop:saturation}, $\Om^*=\Om_{u}$ where $u=u_{\Om^*}$ is a solution of \eqref{eq:pbfonc}.

The first point of the Theorem has been proved in Sections \ref{ssect:lip} and \ref{ssect:perimeter}. Let us prove the second part of the statement.
\begin{itemize}
\item[(a)] Let us define $\Sigma_{reg}:=\{x_{0}\in\partial\Om^*, \theta(x_{0})=\frac{1}{2}\}$. From the gap estimate in Corollary \ref{cor:gap}, it is easy to show that $\Sigma_{reg}$  is relatively open in $\partial\Om_{u}$. Moreover, if $x_{0}\in\Sigma_{reg}$ from the convergence properties of the blow-up, one can see that the domain is flat in a neighborhood of $x_{0}$, see also \cite[Proposition 6.2]{DET17Fre}. The function $u$ satisfies
$$\left\{
\begin{array}{ll}
\Delta_{g} u + \lambda_1(\Omega^*)\, u = 0&\textrm{on}\;\;\Omega^*\\[2mm]
u=0&\textrm{on}\;\;\partial\Omega^*\\[2mm]
\|\nabla^{g} u\|_{g}=\sqrt{\Lambda}&\textrm{on}\;\;\partial\Omega^*
\end{array}
\right.
$$
where the last equation is understood in the viscosity sense. We are then in position to apply \cite[Appendix A]{STV18Fre} (which is an adaptation of \cite{DS11Fre})
, which implies that near $x_{0}$ the set $\partial\{u>0\}=\partial\Om^*$ is $C^{1,\alpha}$. Indeed the equation $-\Delta_{g}u=\lambda_{1}(\Om^*)u$ can be written in divergence form 
\[ 
-\partial_{i}\left(g^{ij}\sqrt{|g|} \partial_j\right) u=\sqrt{|g|}\lambda_{1}(\Om^*)u\,.
\]
Moreover
\[
\|\nabla^{g} u \|_{g} =\sqrt{g(\nabla^{g} u, \nabla^{g} u)} = \sqrt{ (g^{ij} \partial_j u)^T\, g_{ij}\, (g^{ij} \partial_j u)} = \sqrt{(\nabla^e u)^T\, g^{ij}\, \nabla^e u}=
\]
\[
=  \sqrt{(\nabla^e u)^T\, (g^{ij})^{\frac12} \, (g^{ij})^{\frac12} \, \nabla^e u} = \sqrt{((g^{ij})^{\frac12} \, \nabla^e u)^T \, (g^{ij})^{\frac12} \, \nabla^e u} = |(g^{ij})^{\frac12}\, \nabla^e u|\,.
\]
Then, if we define the matrix $A = A_{ij} =g^{ij} \sqrt{|g|}$, we have that $u$ satisfies
$$\left\{
\begin{array}{ll}
-\div (A \cdot \nabla^e u) = \sqrt{|g|}\, \lambda_1(\Omega^*)\, u &\textrm{on}\;\;\Omega^*\\[2mm]
u=0&\textrm{on}\;\;\partial\Omega^*\\[2mm]
|A^{\frac12} \nabla^e u |=\sqrt{\Lambda} \, |g|^{\frac14}&\textrm{on}\;\;\partial\Omega^*
\end{array}
\right.
$$
and this allows us to use \cite[Appendix A]{STV18Fre}.
In order to obtain higher regularity for $\Sigma_{reg}$, we apply the classical results in \cite{KN77Reg}.
\item[(b)] We use a classical reduction of dimension argument, that can be found in two forms in the literature, namely the Federer's reduction principle (see for example \cite[Appendix A]{S83Lec}), or the approach of Weiss following \cite{G84Min}. We follow the latter, though we only give the sketch of the proof as it is very similar to \cite[Proposition 5.18]{MTV17Reg}, :
\begin{itemize}
\item if $n<k^*$, then any blow-up at $x_{0}\in\partial\Om_{u}$ is an 1-homogeneous global minimizer of the Alt-Caffarelli functional, and is therefore a half-plane, by definition of $k^*$, so it's density at the origin is $\frac12$, which means by \eqref{eq:density} that $\theta(x_{0})=\frac{1}{2}$, so $x_{0}\in\Sigma_{reg}$.
\item if $n=k^*$, let assume by contradiction that there is an infinite set of points $x_{n} \in \Sigma_{sing}:=\partial\Om_{u}\setminus\Sigma_{reg}$ in $M$. Up to subsequence, we can assume that $x_{n}$ converges to $x_{0}$, and as $\Sigma_{sing}$ is closed, we still have $x_{0}\in \Sigma_{sing}$. We denote $\eps_{n}=d_g(x_{n},x_{0})$ and consider the blow-up around $x_{0}$ done with the functions $u_{\eps_{n}}$, converging to $\Om_{0}$, a cone with singularity at $0$.\\
First we note that $\Om_{0}$ has only one singularity. Indeed, if it had another singularity, then by homogeneity it would have a line of singularity, which contradicts the results of Weiss (\cite[Theorem 4.1]{W99Par}). Therefore denoting $\xi_{n}:=\textnormal{exp}^g_{x_0} \Theta\left( \frac{x_{n}}{\eps_n}\right)\in\partial\Om_{u_{\eps_{n}}}$, converging (up to a subsequence) to $\xi_{0}\in\partial\Om_{0}$, we know that $\xi_{0}$ is a regular point of $\Om_{0}$. As a consequence, for $r_{0}$ small enough, $\phi^g_{u_{0},\xi_{0}}(r_{0})$ is close to $\frac{1}{2}$. By convergence of $u_{\eps_{n}}$ to $u_{0}$, this means that $\phi^{\bar g}_{u_{\eps_{n}},\xi_{0}}(r_{0})$ is close to $\frac{1}{2}$ for $n$ large enough. With similar computations as in \cite{MTV17Reg}, this implies that $\phi^{\bar g}_{u_{\eps_{n}},\xi_{n}}(r_{0})$ is close to $\frac{1}{2}$ for $n$ large enough. By monotonicity, this implies $\phi^{\bar g}_{u_{\eps_{n}},\xi_{n}}(r)$ is close to $\frac{1}{2}$ for small $r$, and in particular its limit when $r$ goes to 0, which is the density of $\xi_{n}$ in $\Om_{u_{\eps_{n}}}$. This is a contradiction as $\xi_{n}$ is a singular point.
\item Assume by contradiction that for some $s>n-k^*$ we have $\H^s(\Sigma_{sing})>0$. Then using again a blow-up analysis, and the density gap result, one can prove that there is $x_{0}\in\Sigma_{sing}$ and $\Om_{0}$ a blow-up at $x_{0}$ whose singular set also has a positive $\H^s$ measure (the details follow the same lines as in \cite[Proposition 5.18]{MTV17Reg}, itself relying on the strategy of \cite{W99Par}). This constitutes a contradiction and concludes the proof.\qed
\end{itemize}
\end{itemize}

\noindent\textit{Acknowledgements.} This work was partially supported by the project ANR-18-CE40-0013 SHAPO financed by the French Agence Nationale de la Recherche (ANR). 
P. Sicbaldi is partially supported by the grant \textquotedblleft Ram\'on y Cajal 2015\textquotedblright\, RYC-2015-18730 and the grant \textquotedblleft An\'alisis geom\'etrico\textquotedblright\, MTM 2017-89677-P. The authors would also like to thank B. Velichkov for valuable discussions on this work and for providing us an early version of \cite{RTV18Exi}.

\bibliographystyle{plain}
\bibliography{references}
\bigskip

\end{document}